\documentclass{amsart}
\usepackage{amssymb,hyperref}
\usepackage{graphicx}
\usepackage[cmtip,all]{xy}

\newtheorem{theorem}{Theorem}[section]
\newtheorem{lemma}[theorem]{Lemma}
\newtheorem{proposition}[theorem]{Proposition}
\newtheorem{corollary}[theorem]{Corollary}

\theoremstyle{definition}
\newtheorem{definition}[theorem]{Definition}
\newtheorem{example}[theorem]{Example}

\theoremstyle{remark}
\newtheorem{remark}[theorem]{Remark}

\numberwithin{equation}{section}

\usepackage{stmaryrd}
\usepackage{gastex}

\newcommand{\Z}{{\mathbb Z}}     

\renewcommand{\geq}{\geqslant}
\renewcommand{\leq}{\leqslant}

\renewcommand{\preceq}{\preccurlyeq}
\renewcommand{\succeq}{\succcurlyeq}
\newcommand{\tvi}{\vrule height 12pt depth 5 pt width 0 pt}
\newcommand{\Rec}{\operatorname{Rec}}
\newcommand{\bs}{\backslash} 
\newcommand{\FVA}{F_{\mathcal V}(A)}
\newcommand{\FVB}{F_{\mathcal V}(B)}


\begin{document}

\title{Stone duality, topological algebra, and recognition}

\author{Mai Gehrke}
\address{LIAFA, University Paris-Diderot and CNRS, France.}
\email{mgehrke@liafa.univ-paris-diderot.fr}
\thanks{The author acknowledges support from
ANR 2010 BLAN 0202 02 FREC}

\subjclass[2010]{Primary 06D50,20M35,22A30; keywords: Stone/Priestley duality for lattices with additional operations, topological algebra, automata and recognition, profinite completion}

\begin{abstract}
Our main result is that any topological algebra based on a Boolean space is the extended Stone dual space of a certain associated Boolean algebra with additional operations. A particular case of this result is that the profinite completion of any abstract algebra is the extended Stone dual space of the Boolean algebra of recognizable subsets of the abstract algebra endowed with certain residuation operations. These results identify a connection between topological algebra as applied in algebra and Stone duality as applied in logic, and show that the notion of recognition originating in computer science is intrinsic to profinite completion in mathematics in general. This connection underlies a number of recent results in automata theory including a generalization of Eilenberg-Reiterman theory for regular languages and a new notion of compact recognition applying beyond the setting of regular languages. The purpose of this paper is to give the underlying duality theoretic result in its general form. Further we illustrate the power of the result by providing a few applications in topological algebra and language theory. In particular, we give a simple proof of the fact that any topological algebra quotient of a profinite algebra which is again based on a Boolean space is in fact a profinite algebra and we derive the conditions dual to the ones of the original Eilenberg theorem in a fully modular manner.
We cast our results in the setting of extended Priestley duality for distributive lattices with additional operations as some classes of languages of interest in automata theory fail to be closed under complementation.
\end{abstract}

\maketitle



\section{Introduction}\label{sec:intro}

In 1936, M. H. Stone initiated duality theory in logic by presenting a dual category equivalence between the category of Boolean algebras and the category of compact Hausdorff spaces having a basis of clopen sets, so-called Boolean spaces \cite{Stone36}. Stone's duality and its variants are central in making the link between syntactical and semantic approaches to logic. Also in theoretical computer science this link is central as the two sides correspond to specification languages and the space of computational states. This ability to translate faithfully between algebraic specification and spatial dynamics has often proved itself to be a powerful theoretical tool as well as a handle for making practical problems decidable. One may specifically mention Abramsky's paper \cite{Abramsky91} linking program logic and domain theory via Stone duality, Esakia's  duality \cite{Esa74} for Heyting algebras and the corresponding frame semantics for intuitionistic logic, and Goldblatt's paper \cite{Goldblatt89} identifying extended Stone duality as the setting for completeness issues for Kripke semantics in modal logic. These applications need more than just basic Stone duality as the first requires Stone or Priestley duality for distributive lattices and the latter two require a duality for Boolean algebras or distributive lattices endowed with additional operations. Dualities for additional operations originate with J\'onsson and Tarski \cite{JonssonTarski51,JonssonTarski52} and the first purely duality theoretic account may be found in \cite{Goldblatt89} in the setting of Priestley duality. Stone or Priestley duality for Boolean algebras and distributive lattices with various kinds of additional operations are often referred to as \emph{extended} duality.

Profinite algebra goes back at least to the paper \cite{B37} of Garrett Birkhoff, where he introduces topologies defined by congruences on abstract algebras observing that, if each congruence has finite index, then the completion of the topological algebra is compact. Profinite topologies for free groups were subsequently explored by M. Hall \cite{H50}. The profinite approach has also been used to much profit in semigroup theory and in automata theory since the late 1980s, in particular by Almeida, who developed the theory of so-called implicit operations \cite{Almeida05}. The abstract approach to formal languages and automata provided by profinite algebra has lead to the solution of very concrete problems in automata theory, like the filtration problem \cite{BB06} and the characterization of languages recognized by reversible automata \cite{P92}. 

Recognizability is an original subject of computer science. Relying on automata, the notion was first introduced for finite words by Kleene \cite{K56}, but was soon extended to infinite words by B\"uchi \cite{Bu62}, and then further to general algebras \cite{MW67}, finite and infinite trees \cite{D70,ThaWri68,Rab69}, and to many other structures. New settings in which recognition is a fruitful concept are still being developed, for example cost functions \cite{Col09} and data monoids \cite{Boj11}. The success of the concept of recognizability has been greatly augmented by its combination with profinite methods. 

Our main result is a link between topological algebras based on Boolean spaces and extended Stone duality, two distinct applications of topological methods in algebra. In particular, we show that topological algebras based on Boolean spaces are always themselves dual spaces of certain Boolean algebras with additional operations. The bulk of the paper studies this connection in detail, identifying the dual class of Boolean algebras with additional operations, the correspondence for morphisms, and the generalization to Priestley topological algebras and their distributive lattice with additional operations duals. In the special case of the profinite completion of an algebra of any operational type, the dual Boolean algebra with additional operations is the algebra of recognizable subsets of the original algebra endowed with certain operations. This result makes clear that the use in tandem of profinite completions and recognizable subsets in automata theory is not accidental. Since the two are duals of each other, the study of recognizable subsets is natural, not just in automata theory and theoretical computer science, but in any setting where profinite completions occur and vice versa.  The fact that the profinite completion of the free monoid on a finite set of generators is the dual space of a Boolean algebra with additional operations based on the recognizable subsets of the free monoid underlies a number of recent results in automata theory including a generalization of Eilenberg-Reiterman theory for regular languages \cite{GeGrPi08} and a notion of compact recognition applying beyond the setting of regular languages \cite{GehrkeGrigorieffPin10}. 

The paper is organized as follows. In Section~\ref{sec:duality} we provide the required preliminaries on duality theory. This material is not available in the needed form and with a uniform presentation, so we go in some detail. We include the discrete duality due essentially to Birkhoff as it underlies the topological one and is especially important for understanding additional operations. We describe the correspondences across the discrete and topological dualities for homomorphisms, subalgebras, and additional operations with some meet or join preservation properties. Section~\ref{sec:TopAlg} contains the main results of the paper. We show that topological algebras over Priestley spaces are dual spaces of certain distributive lattices with additional operations, and we identify the special features of the objects on either side of the duality. Finally we consider duality for maps. In particular, we identify the dual notion to one topological algebra over a Priestley space being an (ordered) topological algebra quotient of another. This gives rise to the notion of residuation ideal. Profinite algebras are particular topological algebras based on Boolean spaces. In Section~\ref{sec:ProfAlgAppl}  we identify the lattices with additional operations dual to profinite algebras and use this characterization to prove that Boolean-topological quotients of profinite algebras are again profinite. Then we specialize further and consider those profinite algebras which are profinite completions. In particular we show that the profinite completion of any discrete abstract algebra is the dual space of the Boolean algebra of recognizable subsets of the original abstract algebra equipped with certain residuation operations. Our proof of this result uses the general results of Section~\ref{sec:TopAlg} and is more conceptually transparent than the one used in \cite{GeGrPi08} (see also Lemma 1 of \cite{Gehrke09}).  Finally, we show how Eilenberg-Reiterman theory comes about from the duality between sublattices and quotient spaces applied in this setting.

Most of the results of this paper as well as their proofs were first discovered using an algebraic approach to duality for lattices with additional operations know as the theory of canonical extensions \cite{GehrkeJonsson04}. However, in order to make the paper accessible to researchers only familiar with duality theory in its topological form, we have chosen to present the results and their proofs without reference to canonical extensions. This has the drawback that it is less transparent how we arrived at the right notions and statements of results. For an outline of the canonical extension approach to this material, see \cite{Gehrke09}. 


\section{Preliminaries on duality}\label{sec:duality}

In this section we collect the basic facts about duality and extended duality that we will need. We assume all lattices to be distributive and bounded with the least element denoted by $0$ and greatest element by $1$.

\subsection{Discrete duality}\label{subsec:discdual}

The starting point of the representation theory of distributive lattices is the classical theorem of Birkhoff for finite distributive lattices. Also, duality for additional operations in the infinite topological setting is obtained by adding topological requirements to the underlying discrete duality. For this reason it is interesting to review here this discrete duality generalizing Birkhoff.

 An element $p$ in a lattice is called join-irreducible provided $p \neq 0$ and whenever $p = a \vee b$, we have $p = a$ or $p = b$.
\begin{theorem}[Birkhoff] \label{thm:birkhoff}
Any finite distributive lattice $D$ is isomorphic to the lattice of down-sets of the partially ordered set of join-irreducible elements of $D$ via the asignment for $a \in D$ 
\[
a\mapsto \widehat{a} := {\downarrow} a \cap J(D) = \{p \in D \ | \ p \text{ join-irreducible}, p \leq a\}.
\]
\end{theorem}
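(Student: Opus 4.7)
The plan is to prove that the map $\varphi \colon a \mapsto \widehat{a}$ is an order-isomorphism from $D$ to the lattice of down-sets of $J(D)$ under inclusion. Since meets and joins of down-sets are just intersections and unions, any such order-isomorphism will automatically preserve meets and joins, and hence be a lattice isomorphism. It is immediate from the definition that $\widehat{a}$ is a down-set of $J(D)$ and that $\varphi$ is monotone. The content of the theorem is then concentrated in two observations, one using distributivity and one using finiteness.

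The first is the \emph{primeness} of join-irreducibles in a distributive lattice: if $p \in J(D)$ and $p \leq a_1 \vee \cdots \vee a_n$, then $p \leq a_i$ for some $i$. Indeed, distributivity yields
\[
p \;=\; p \wedge (a_1 \vee \cdots \vee a_n) \;=\; (p \wedge a_1) \vee \cdots \vee (p \wedge a_n),
\]
and since $p$ is join-irreducible one of the meets on the right must equal $p$, giving $p \leq a_i$.

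The second observation is that every $a \in D$ equals $\bigvee \widehat{a}$. I would prove this by induction on the cardinality of ${\downarrow}a$: if $a = 0$ or $a$ is join-irreducible, the statement is immediate, and otherwise $a = b \vee c$ with $b, c < a$, in which case the inductive hypothesis applied to $b$ and $c$ gives $a = \bigvee \widehat{b} \vee \bigvee \widehat{c} = \bigvee(\widehat{b} \cup \widehat{c}) \leq \bigvee \widehat{a} \leq a$, forcing equality throughout. Finiteness of $D$ is what ensures the induction terminates and that the joins involved exist.

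These two facts combine to finish the argument. Injectivity of $\varphi$ is immediate, since $\widehat{a} = \widehat{b}$ implies $a = \bigvee \widehat{a} = \bigvee \widehat{b} = b$. For surjectivity, take any down-set $S \subseteq J(D)$, set $a := \bigvee S$, and check $\widehat{a} = S$: the inclusion $S \subseteq \widehat{a}$ is clear, and conversely any $p \in \widehat{a}$ satisfies $p \leq \bigvee S$, so primeness gives $p \leq s$ for some $s \in S$, whence $p \in S$ because $S$ is a down-set and $p \in J(D)$. I do not expect any genuine obstacle here; the only substantive ingredient is the primeness of join-irreducibles under distributivity, and the remainder is bookkeeping.
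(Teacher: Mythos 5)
Your proof is correct. Note that the paper itself offers no proof of this statement: it is quoted as Birkhoff's classical representation theorem and used as a preliminary, so there is nothing to compare against; your argument is the standard one (primeness of join-irreducibles via distributivity, plus $a=\bigvee\widehat{a}$ by induction using finiteness). The only point you leave implicit is that $\varphi$ reflects order as well as preserving it --- i.e.\ $\widehat{a}\subseteq\widehat{b}$ implies $a\leq b$ --- which is needed for ``order-isomorphism'' and hence for the automatic preservation of meets and joins; but this follows in one line from the identity $a=\bigvee\widehat{a}$ exactly as in your injectivity argument, so the gap is cosmetic.
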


Birkhoff's duality generalizes to the category of complete lattices that are isomorphic to down-set lattices of posets. In the tradition of \cite{JonssonTarski51,GehrkeJonsson04}, we call these $DL^+$s. These lattices have a number of different abstract characterizations. They are the completely distributive complete lattices in which every element is the supremum of completely join-irreducible elements. Here, an element $p$ in a complete lattice $C$ is called completely join-irreducible provided, $p = \bigvee S$ with $S \subseteq C$ implies $p \in S$ and we denote the set of all completely join-irreducible elements of $C$ by $J^\infty(C)$. The $DL^+$s are also the doubly algebraic distributive lattices, see e.g. \cite[p. 83]{CrawleyDilworth73} for an early textbook source. Finally, this class of lattices was also rediscovered in the domain theory community where they are known as the prime algebraic distributive lattices \cite{NielsenPlotkinWinskel1981}. The Boolean members are the complete and atomic Boolean algebras, often denoted in the literature as CABAs or BA$^+$s.

\begin{theorem}\cite{Raney52}\label{cor:perfectbirkhoff}
Any $DL^+$ is isomorphic to the lattice of down-sets of the partially ordered set of its completely join-irreducible elements. In particular, a complete and atomic Boolean algebra is isomorphic to the powerset of its set of atoms.
\end{theorem}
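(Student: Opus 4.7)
My plan is to build a lattice isomorphism via the two mutually inverse maps $\varphi : C \to \mathcal{D}(J^\infty(C))$, $a \mapsto \widehat{a} := {\downarrow}a \cap J^\infty(C)$, and $\psi : \mathcal{D}(J^\infty(C)) \to C$, $S \mapsto \bigvee S$. Both are manifestly order-preserving, so once they are shown to be inverse bijections the order isomorphism is automatic, and between complete lattices any order isomorphism is a complete lattice isomorphism.

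The identity $\psi \circ \varphi = \mathrm{id}_C$, i.e.\ $a = \bigvee \widehat{a}$ for every $a \in C$, is exactly the hypothesis built into the definition of $DL^+$ that every element is a supremum of completely join-irreducibles lying below it; this already delivers injectivity of $\varphi$ for free.

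The real content is the other identity $\varphi \circ \psi = \mathrm{id}$, namely $\widehat{\bigvee S} = S$ for every down-set $S \subseteq J^\infty(C)$. The inclusion $S \subseteq \widehat{\bigvee S}$ is immediate. For the converse, suppose $p \in J^\infty(C)$ with $p \leq \bigvee S$. Then $p = p \wedge \bigvee S$, and here I would invoke complete distributivity (part of the definition of $DL^+$) to rewrite this as $p = \bigvee_{s \in S}(p \wedge s)$. Complete join-irreducibility then forces $p = p \wedge s$ for some $s \in S$, whence $p \leq s$ and, $S$ being a down-set, $p \in S$. I expect this coupling of complete distributivity with complete join-irreducibility to be the one genuine step of the argument; everything else is bookkeeping. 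The Boolean specialization then falls out at once, since in a complete atomic Boolean algebra the completely join-irreducible elements are precisely the atoms, which form an antichain so that every subset of atoms is automatically a down-set, and the isomorphism $\varphi$ collapses to the standard identification of the algebra with the powerset of its set of atoms.
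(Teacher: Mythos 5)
Your argument is correct: the only substantive step is showing $\widehat{\bigvee S}=S$ for a down-set $S\subseteq J^\infty(C)$, and your use of the (join-)infinite distributive law $p\wedge\bigvee S=\bigvee_{s\in S}(p\wedge s)$ together with complete join-irreducibility of $p$ is exactly the standard way to do it; the identity $a=\bigvee\widehat a$ is indeed just the defining property of a $DL^+$, and the Boolean specialization follows as you say once one knows (as the paper asserts without proof) that the completely join-irreducibles of a CABA are its atoms. Note that the paper itself gives no proof of this theorem, citing Raney instead, so there is nothing to compare against; your proof fills that gap with the expected argument.
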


This correspondence between  $DL^+$s and posets extends to a categorical duality in which complete lattice homomorphisms correspond to order-preserving maps. The correspondence between complete homomorphisms $h:C\to C'$ and order-preserving maps $\varphi:X'\to X$ is given by the following adjunction property for $x'\in X'=J^\infty(C')$ and $u\in C$ 
\[
\varphi(x')\leq u \quad\iff\quad x'\leq h(u).
\]
This works because $h$ has a lower adjoint which maps completely join-irreducibles to completely join-irreducibles and because $h$ may be recovered from this map. For further details, see Section 1.1 of \cite{Gehrke11}. 

 Let $C$ be a $DL^+$ and $X$ its poset of completely join irreducibles. Consider the following relation $R$ between elements $a\in C$ and pairs $(x,x')\in X\times X$:
\[
a \ R \ (x,x')\quad\iff\quad (x'\leq a\Rightarrow x\leq a).
\]
From this relation we get a Galois connection \cite{Birkhoff79} between the powersets of $C$ and $X\times X$ given by 
\begin{align*}
\mathcal E\ :\mathcal P(C) &\quad \leftrightarrows\quad \mathcal P(X\times X):\ \mathcal S\\
                                       K & \quad\mapsto\quad            \{(x,x')\mid\forall a\in K\ ( a \ R \ (x,x'))\}\\
\{a\mid\forall(x,x')\in \Delta \ (a \ R \ (x,x'))\}&\quad\mapsfrom \quad \Delta
\end{align*}

\begin{theorem}\label{thrm:dualityforcomplsubs}
Let $C$ be a $DL^+$ and $X$ its poset of completely join irreducibles. Further, let $\mathcal E\ :\mathcal P(C) \ \leftrightarrows\ \mathcal P(X\times X):\ \mathcal S$ be the above Galois connection. The Galois closed sets are the complete sublattices of $C$ and the quasi-orders on $X$ extending the partial order of $X$, respectively.
\end{theorem}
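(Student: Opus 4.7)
The plan is to exploit the Raney representation of Theorem~\ref{cor:perfectbirkhoff}, which identifies $C$ with the lattice of down-sets of $X$ via $a\mapsto\widehat a$. Under this identification the relation $a\,R\,(x,x')$ reads ``$x'\in\widehat a\Rightarrow x\in\widehat a$'', so $a\in\mathcal S(\Delta)$ says exactly that the down-set $\widehat a$ is closed under the binary relation $\Delta$ on $X$, while $\mathcal E(K)$ collects the pairs under which every $\widehat a$ for $a\in K$ is closed. This translation makes the two ``easy'' inclusions immediate.

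Concretely, $\mathcal S(\Delta)$ is always a complete sublattice of $C$: in the down-set lattice, joins are unions and meets are intersections, and closure of a family of subsets of $X$ under a relation is preserved by arbitrary unions and intersections. Dually, $\mathcal E(K)$ is always a quasi-order extending the order on $X$; reflexivity and transitivity follow from the implicational form of the defining condition, and if $x\leq x'$ then ``$x'\leq a\Rightarrow x\leq a$'' is automatic.

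The substantive content is in the two converse inclusions. Given a complete sublattice $L$ of $C$, set $Q=\mathcal E(L)$ and for each $x'\in X$ put
\[
\alpha(x')=\bigwedge\{a\in L\mid x'\leq a\},
\]
which lies in $L$ because $L$ is closed under arbitrary meets. A short check gives $(x,x')\in Q\iff x\leq\alpha(x')$. Now for $b\in\mathcal S(Q)$ the defining condition says that whenever $x'\leq b$, every $x\in X$ with $x\leq\alpha(x')$ satisfies $x\leq b$; since $C$ is a $DL^+$ and $\alpha(x')$ is the join of the completely join-irreducibles below it, this forces $\alpha(x')\leq b$. Combining with $x'\leq\alpha(x')$ and $b=\bigvee\{x'\in X\mid x'\leq b\}$, we obtain $b=\bigvee\{\alpha(x')\mid x'\in X,\ x'\leq b\}$, a join of elements of $L$, so $b\in L$. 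For the other converse, suppose $Q$ is a quasi-order extending $\leq$ and $(x,x')\notin Q$. The set $\{y\in X\mid (y,x')\in Q\}$ is a $\leq$-down-set because $Q$ extends $\leq$, and thus determines an element $a\in C$; transitivity of $Q$ ensures $\widehat a$ is $Q$-closed, so $a\in\mathcal S(Q)$. By construction $x'\leq a$ while $x\not\leq a$, witnessing $(x,x')\notin\mathcal E(\mathcal S(Q))$.

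The main obstacle is the complete-sublattice direction, which requires reassembling an arbitrary $b\in\mathcal S(\mathcal E(L))$ as a join of elements actually in $L$. This is where the full $DL^+$ structure---join-density of the completely join-irreducibles together with the definability of each element as the supremum of such---is essential, in conjunction with the fact that $L$ is closed under the relevant arbitrary meets and joins. The remaining three inclusions are formal consequences of the definitions and of the Galois connection.
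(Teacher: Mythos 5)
Your proof is correct. Note that the paper itself states Theorem~\ref{thrm:dualityforcomplsubs} without proof, remarking only that the topological counterpart (Theorem~\ref{prop:subalg duality}) will be derived later; so the natural comparison is with the proof of Proposition~\ref{prop:quasiorder-down-sets}, of which your argument is the exact discrete analogue. There, given $b\in A_{\preceq_A}$, the element $b$ is reassembled as a finite meet of finite joins of elements of $A$, with compactness of $\eta_B(b)$ and of $(\eta_B(b))^c$ invoked to cut two a priori infinite covers down to finite ones; in the $DL^+$ setting you can dispense with compactness entirely because the complete sublattice $L$ is closed under \emph{arbitrary} meets and joins, which is what lets you form $\alpha(x')=\bigwedge\{a\in L\mid x'\leq a\}$ in one step and then write $b=\bigvee\{\alpha(x')\mid x'\in X,\ x'\leq b\}$ using join-density of $J^\infty(C)$. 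Your treatment of the other closure, showing $\mathcal E(\mathcal S(Q))\subseteq Q$ by exhibiting the $Q$-closed down-set $\{y\mid (y,x')\in Q\}$ as a separating element, likewise fills in what the paper only asserts as ``straightforward'' in the topological case (there one must additionally check clopenness, which is vacuous here). The one point worth making explicit, which you use implicitly via Theorem~\ref{cor:perfectbirkhoff}, is that the Raney isomorphism $a\mapsto\widehat a$ carries arbitrary joins and meets of $C$ to unions and intersections of down-sets, so that completely join-irreducible elements are in fact completely join-prime; this is what justifies both the ``easy'' inclusion for $\mathcal S(\Delta)$ and the step $\alpha(x')\leq b$.
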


This is a poset theoretic generalization of the correspondence between complete Boolean subalgebras of power sets and equivalence relations on the underlying sets. In Section~\ref{PrDuality}  we will derive most of the corresponding result of topological duality (cf. Theorem~\ref{prop:subalg duality}).


An operation on a $DL^+$, $f:C^n\to C$, is a \emph{complete operator} provided it preserves arbitrary joins in each coordinate. For such an operation we have for each $\overline{u}\in C^n$
\[
f(\overline{u})=\bigvee\{f(\overline{x})\mid \overline{x}\in X^n\text{ with }\overline{x}\leq\overline{u}\}
\]
where $X$ is the poset of completely join irreducible elements of $C$. Define $R_f$ for $\overline{x}\in X^n$ and $x\in X$, by 
\[
\overline{x}R_fx\ \iff\ f(\overline{x})\geq x.
\] 
One may observe that the relations thus obtained are order-compatible in the following sense.

\begin{definition}
Let $X$ be a poset and $R\subseteq X^{n+1}$. We say that $R$ is \emph{order-compatible} provided for all $\overline{x},\overline{x}'\in X^n$ and all $x,x'\in X$, if $\overline{x}'\geq\overline{x}$ and $\overline{x}Rx$ and $x\geq x'$, then $\overline{x}'Rx'$.
\end{definition}

One obtains the following discrete duality theorem for complete operators.

\begin{theorem}\cite{JonssonTarski52}
Let $C$ be a $DL^+$ and $X$ its poset of completely join irreducibles. Discrete duality yields a one-to-one correspondence between the complete $n$-ary operators on $C$ and the order-compatible $(n+1)$-ary relations on $X$. It is given by
\begin{align*}
f:C^n\to C\qquad&\mapsto\qquad R_f=\{(\overline{x},x)\mid  x\leq f(\overline{x})\}\\
R\subseteq X^{n+1}\qquad &\mapsto\qquad f_R:{\mathcal D}(X)^n\to {\mathcal D}(X), \text{ with } f_R(\overline{U})=R[U_1,\ldots,U_n,\underline{\ }]
\end{align*}
where\\ 
$R[U_1,\ldots,U_n,\underline{\ }]=\{x\in X\mid \exists x_i\in U_i, i=1,\ldots,n,\text{ with }(x_1,\ldots,x_n,x)\in R\}$.
\end{theorem}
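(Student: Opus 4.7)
My plan is to work through the isomorphism $C\cong\mathcal{D}(X)$ provided by Theorem~\ref{cor:perfectbirkhoff}, under which each $x\in X$ corresponds to the principal down-set ${\downarrow}x$. I will verify in turn that both assignments are well-defined, then check the two round-trips.

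For well-definedness, first note that $R_f$ is order-compatible for any complete operator $f$: since complete operators are monotone in each coordinate, if $\overline{x}'\geq\overline{x}$, $\overline{x}\,R_f\,x$ (i.e.\ $x\leq f(\overline{x})$), and $x\geq x'$, then $x'\leq x\leq f(\overline{x})\leq f(\overline{x}')$, whence $\overline{x}'\,R_f\,x'$. Conversely, I need that $f_R$ is a complete operator. The order-compatibility condition specialised to $\overline{x}'=\overline{x}$ forces the output $f_R(\overline{U})$ to be downward closed, hence a member of $\mathcal{D}(X)$; the existential definition of $f_R$ then immediately yields preservation of arbitrary unions in each coordinate.

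For the round-trip $R\mapsto f_R\mapsto R_{f_R}$, the tuple $(\overline{y},y)$ lies in $R_{f_R}$ precisely when $y\in f_R({\downarrow}y_1,\ldots,{\downarrow}y_n)$, that is, when some $z_i\leq y_i$ satisfy $(\overline{z},y)\in R$. Taking $z_i=y_i$ gives one inclusion; the other follows from order-compatibility applied with $\overline{x}=\overline{z}$, $\overline{x}'=\overline{y}$, and $x=x'=y$.

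For the round-trip $f\mapsto R_f\mapsto f_{R_f}$, I would unfold
\[
f_{R_f}(\overline{U})=\{x\in X\mid\exists\,\overline{y}\in X^n,\ y_i\in U_i,\ x\leq f(\overline{y})\},
\]
and, writing $u_i=\bigvee U_i$ in $C$, apply complete operator-ness one coordinate at a time to obtain $f(\overline{u})=\bigvee\{f(\overline{y})\mid\overline{y}\in X^n,\ \overline{y}\leq\overline{u}\}$. Transporting through $C\cong\mathcal{D}(X)$ and using that in a $DL^+$ every completely join-irreducible is completely join-prime (so $x\leq\bigvee S$ forces $x\leq s$ for some $s\in S$), the down-set of $f(\overline{u})$ in $X$ is precisely the displayed right-hand side. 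The main obstacle I anticipate is this last step, where careful use of completely join-primeness and the translation between $C$ and $\mathcal{D}(X)$ is needed; the other three checks are essentially formal manipulations.
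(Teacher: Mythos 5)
Your proposal is correct. The paper states this theorem by citation to J\'onsson--Tarski and gives no proof of its own, but your argument is the standard one and matches the framework the paper sets up: in particular, the identity $f(\overline{u})=\bigvee\{f(\overline{x})\mid \overline{x}\in X^n,\ \overline{x}\leq\overline{u}\}$, which you correctly flag as the crux of the second round-trip, is exactly the formula the paper displays just before the theorem, and your remaining steps (order-compatibility of $R_f$, $f_R$ being a complete operator via the existential definition, and complete join-primeness of the elements of $X$ in a $DL^+$) are all sound.
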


It is well known that an operation on a complete lattice, $f:C^n\to C$, is completely join-preserving in the $i$th coordinate if and only if it has an $i$th upper residual $f^\#_i:C^n\to C$. That is, $f$ and $f^\#_i$ are related by
\[
\forall a_1,\ldots,a_n,a\in C  \left(f(a_1,\ldots,a_n)\leq a\ \iff\ a_i\leq f^\#_i(a_1,\ldots,a_{i-1},a_{i+1},\ldots,a_n,a)\right).\footnote{For binary operations with infix notation, we denote the two upper residuals as right and left division, see e.g. Section~\ref{ExtDuality}.}
\]

Also, these two maps uniquely determine each other and the fact that $f^\#_i$ has a lower residual is equivalent to the fact that it turns arbitrary meets in the last coordinate into meets. If, in addition, $f$ is completely join preserving in each of its other coordinates, then $f^\#_i$ also turns arbitrary joins in each of the first $n-1$ coordinates into meets. The relation $R$ dual to a complete operator $f$ may also be seen as the dual of the upper residuals of $f$. The $i$th residual is given on the down-set lattice of $X$ by 
\begin{align*}
(U_1,\ldots,U_{i-1},U_{i+1},\ldots,U_n,U)&\mapsto \left(R[U_1,\ldots,U_{i-1},\underline{\ \ },U_{i+1},\ldots,U_n,U^c]\right)^c
\end{align*}
where $(\ \ )^c$ stands for the set-theoretic complement.

For more details on residuation see Section~4 of \cite{GehrkePriestleyPP2}. The binary case is also discussed further in Section~\ref{ExtDuality} below. 

\subsection{Ideals and filters}

The basic idea of lattice duality is to represent a lattice by its set of join- and/or meet-irreducible elements. However, for infinite lattices, there aren't necessarily enough of these, and idealized elements, in the form of ideals or filters, and topology must be considered.
	
Let $D$ be a bounded distributive lattice. A subset $I$ of $D$ is an ideal provided it is a down-set closed under finite joins. We denote by $Idl(D)$ the set of all ideals of $D$ partially ordered by inclusion. A subset $F$ of $D$ is a filter provided it is an up-set closed under finite meets. Filters represent (possibly non-existing) infima and thus the order on filters is given by \emph{reverse} inclusion. We denote by $Filt(D)$ the partially ordered set of all filters of $D$. A proper ideal $I$ is \emph{prime} provided $a\wedge b\in I$ implies $a\in I$ or $b\in I$.  A proper filter $F$ is \emph{prime} provided $a\vee b\in F$ implies $a\in F$ or $b\in F$. Note that a filter is prime if and only if its complement is an ideal, which is then necessarily prime, so that prime filters and prime ideals come in complementary pairs. In particular this means that the set of prime ideals with the inclusion order is isomorphic to the set of prime filters with the reverse inclusion order. For a bounded distributive lattice $D$ we will denote this partially ordered set by $X_D$ or just $X$. Since there are so many set theoretic levels in use when one talks about duality, we will revert to lower case letters $x,y,z\ldots$ for elements of $X$ and to make clear when we talk about the corresponding prime filter or the complementary prime ideal we will denote these by $F_x$ and $I_x$, respectively.  

\subsection{Stone and Priestley duality}\label{PrDuality}

For any bounded distributive lattice $D$ the following map is a bounded lattice homomorphism
\begin{align*}
\eta_D:D&\to{\mathcal P}(X_D)\\
       a&\mapsto \eta_D(a)=\{x\in X_D\mid a\in F_x\}
\end{align*}
Using the Axiom of Choice one may in addition show that $D$ has enough prime filters/ideals in the sense that this map also is injective. The Stone dual space \cite{Stone37} of $D$ is the topological space $(X_D,\sigma)$ where $\sigma$ is the topology on $X_D$ generated by the image of the map $\eta_D$, that is, by the basis
\[
\{\eta_D(a)\mid a\in D\}.
\]
For a Boolean algebra this yields a compact Hausdorff space for which the above basis is precisely the collection of clopen subsets of the space. For a non-Boolean bounded distributive lattice the corresponding Stone space is not $T_1$ separated and its specialization order is given by inclusion on the prime filters. The later Priestley variant of Stone duality \cite{Priestley70a} relies on the fact that every bounded distributive lattice, $D$, has a unique Boolean extension, $D^-$, whose prime filters are in one-to-one correspondence with the prime filters of $D$ and that may be obtained by generating a Boolean subalgebra of $\mathcal P(X_D)$ with the image of $\eta_D$. Thus the Priestley dual space of a bounded distributive lattice $D$ is the ordered topological space $(X_D,\leq,\pi)$ where 
\[
x\leq y \iff F_x\supseteq F_y \iff I_x\subseteq I_y
\]
and $\pi$ is the topology on $X_D$ generated by the subbasis
\[
\{\eta_D(a),(\eta_D(a))^c\mid a\in D\}.
\]
In the case where the lattice $D$ is a Boolean algebra, the Priestley duality agrees with the original Stone duality for Boolean algebras \cite{Stone36} and we may refer to it as Stone duality rather than as Priestley duality. The dual of a homomorphism $h:D\to E$ between distributive lattices in Priestley duality (as well as in Stone duality) is the map $f:X_E\to X_D$ such that $f(x)=y$ if and only if $h^{-1}(F_x)=F_y$. One can then show that the space $(X_D,\leq,\pi)$ is compact and \emph{totally order disconnected}, that is, for $x,y\in X_D$ with $x\nleq y$ there is a clopen down-set $U$ with $y\in U$ and $x\not\in U$. Also, for any homomorphism $h:D\to E$, the map $h^{-1}:X_E\to X_D$ is continuous and order preserving. 

 A \emph{Priestley space} is an ordered topological space that is compact and totally order disconnected and the morphisms of Priestley spaces are the order preserving continuous maps. The dual of a Priestley space $(X,\leq,\pi)$ is the bounded distributive lattice $ClopD(X,\leq,\pi)$ of all subsets of $X$ that are simultaneously clopen and are down-sets. For $\varphi:X\to Y$ a morphism of Priestley spaces, the restriction of the inverse image map to clopen down-sets, $\varphi^{-1}: ClopD(Y)\to ClopD(X)$, is a bounded lattice homomorphism and is the dual of $\varphi$ under Priestley duality. The translations back and forth given above account for Priestley duality. It allows one to translate essentially all structure, concepts, and problems back and forth between the two sides of the duality. 

One particular case of this translation across the duality is the correspondence between bounded sublattices of a lattice and the Priestley quotients of the dual space of the lattice. This is central to this work, and, while it is well known to duality theorists, we will supply some details here. 

Let $i:A\hookrightarrow B$ be an inclusion of bounded distributive lattices. Its dual is a quotient map $X_B\ \twoheadrightarrow \ X_A$ where $x\in X_B$ is sent to the point of the dual of $A$ corresponding to the prime filter $i^{-1}(F_x)=F_x\cap A$. That is, in terms of prime filters, the quotient map is given by restricting the prime filters of $B$ to $A$. The kernel of this quotient map is a quasiorder containing the 
order on $X_B$. One can characterize the quasiorders arising in this way and this describes the correspondence. However, we can get something a bit better, namely, a Galois connection whose Galois closed sets are the bounded sublattices on one side and the appropriate quasiorders on the other.

Let $B$ be bounded distributive lattice and $S$ a subset of $B$. Then $S$ gives rise to a binary relation on $X_B$ given by
\[
x\preceq_S y \ \iff\ \forall\ a\in S\quad (a\in F_y\ \Rightarrow\ a\in F_x).
\]
It is easy to verify that $\preceq_S$ is a quasiorder extending the order on $X_B$. In the other direction, given a subset $E\subseteq X_B\times X_B$, we obtain a subset $A_E$ of $B$ given by
\[
A_{E}=\{a\in B\mid \forall(x,y)\in E\quad(a\in F_y\ \Rightarrow\ a\in F_x)\}.
\]
Here again it is easy to show that, for any $E\subseteq X_B\times X_B$, the set $A_E$ is a bounded sublattice of $B$. The key facts are the following.

\begin{proposition}\label{prop:quasiorder-down-sets}
Let $B$ be a bounded distributive lattice and let $A$ be a bounded sublattice of $B$. Then we have 
\[
A_{\preceq_A}=A.
\]
\end{proposition}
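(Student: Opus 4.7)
$\textbf{Proof proposal.}$ The plan is to prove the two inclusions separately. The inclusion $A \subseteq A_{\preceq_A}$ is immediate from unraveling definitions: for $a \in A$ and any pair $(x,y)$ with $x \preceq_A y$, the defining property of $\preceq_A$ applied to $a$ itself yields $a \in F_y \Rightarrow a \in F_x$.

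For the reverse inclusion I would argue the contrapositive: given $b \in B \setminus A$, I will construct $x,y \in X_B$ with $x \preceq_A y$, $b \in F_y$, and $b \notin F_x$; the pair $(x,y)$ then witnesses $b \notin A_{\preceq_A}$. The construction uses the prime filter theorem twice. First, set $I_0 = A \cap {\downarrow}\, b$, an ideal of $A$. The filter ${\uparrow}\, b$ and the ideal ${\downarrow}\, I_0$ of $B$ are disjoint, since any common element $c$ would give $b \le c \le a$ for some $a \in I_0 \subseteq A$, forcing $b = a$ and hence $b \in A$, contrary to hypothesis. A first application of the prime filter theorem therefore produces a prime filter $F_y$ with $b \in F_y$ and $F_y \cap I_0 = \emptyset$. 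Setting $G = F_y \cap A$, the filter ${\uparrow}\, G$ and the ideal ${\downarrow}\, b$ of $B$ are also disjoint: any witness would provide a $g \in G \subseteq A$ with $g \le b$, placing $g$ in $I_0 \cap F_y = \emptyset$. A second application of the prime filter theorem yields a prime filter $F_x \supseteq G$ with $b \notin F_x$; then $F_x \cap A \supseteq G = F_y \cap A$ gives $x \preceq_A y$, as required.

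The main obstacle is really the first disjointness check: this is the unique place in the argument where the hypothesis $b \notin A$ is actually used, and it is crucial there that $I_0$ be defined inside $A$ rather than inside $B$, so that the element $a$ produced lies in $A$ and the forced equality $b = a$ is a genuine contradiction. Everything else in the argument is routine prime-filter bookkeeping.
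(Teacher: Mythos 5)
Your proof is correct, but it takes a genuinely different route from the paper's. The paper proves the nontrivial inclusion $A_{\preceq_A}\subseteq A$ \emph{directly}: for $b\in A_{\preceq_A}$ it fixes each $x$ with $b\notin F_x$, extracts elements $a_y\in A$ separating $x$ from each $y$ with $b\in F_y$, and then uses topological compactness of $\eta_B(b)$ and of its complement twice to exhibit $b$ explicitly as a finite meet of finite joins of elements of $A$, whence $b\in A$. You instead argue the \emph{contrapositive}, and in place of compactness of the dual space you invoke the prime filter separation theorem twice: once to find $F_y$ containing $b$ but missing the ideal $A\cap{\downarrow}b$, and once to extend $G=F_y\cap A$ to a prime filter $F_x$ avoiding ${\downarrow}b$, so that $(x,y)$ witnesses $b\notin A_{\preceq_A}$. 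Both arguments ultimately rest on the prime ideal theorem (compactness of the Priestley space is equivalent to it), but yours is purely order-theoretic, never mentions the topology, and is somewhat shorter; the paper's has the advantage of producing the concrete lattice-polynomial representation $b=\bigwedge_j\bigvee_i a_{y_i}^{(j)}$ of $b$ over $A$. All the small verifications in your argument check out: $A\cap{\downarrow}b$ is a nonempty ideal of $A$ (it contains $0$), its disjointness from ${\uparrow}b$ is exactly where $b\notin A$ is used, $G$ is a nonempty filter base, and $G\subseteq F_x$ gives $F_y\cap A\subseteq F_x$, i.e.\ $x\preceq_A y$.
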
 

\begin{proof}
Let $a_0\in A$ and suppose $x\preceq_A y$, that is, $(x,y)\in \preceq_A$. Then by definition of $\preceq_A$, if  $a_0\in F_y$ it follows that $a_0\in F_x$ and thus $a_0\in A_{\preceq_A}$.  Conversely, let $b\in A_{\preceq_A}$. Fix $x\in X_B$ with $b\not\in F_x$. For each $y\in X_B$ with $b\in F_y$ we then must have $x\not\preceq_A y$ since $b\in A_{\preceq_A}$. Thus there is $a_y\in A$ with $a_y\in F_y$ but $a_y\not\in F_x$. Now we have
\[
\eta_B (b)=\{y\in X_B\mid b\in F_y\}\subseteq\bigcup\{\eta_B (a_y)\mid y\in X_B\text{ and } b\in F_y\}.
\]
By compactness of $\eta_B (b)$, it follows that there are $y_1,\ldots,y_n\in X_B$ with $\eta_B (b)\subseteq\bigcup_{i=1}^{n}\eta_B (a_{y_i})$. Let $a_x=\bigvee_{i=1}^{n}a_{y_i}$, then the following are true: $b\leq a_x$ since $\eta_B$ is a lattice embedding and $a_x\in A$ since each of the $a_y$s are in $A$ and $A$ is closed under finite joins. Also $a_x\not\in F_x$ since $F_x$ is prime and $a_y\not\in F_x$ for each $y$. 

So for each $x\in X_B$ with $b\not\in F_x$, we have $a_x\in A$ with $b\leq a_x$ and $a_x\not\in F_x$. The two latter facts correspond to 
$x\in (\eta_B (a_x))^c\subseteq(\eta_B (b))^c$. Thus we have
\[
(\eta_B (b))^c=\bigcup\{(\eta_B (a_x))^c\mid x\in X_B\text{ and } b\not\in F_x\}.
\]
Again, by compactness, there must be $x_1,\ldots,x_m\in X_B$ with $b\not\in F_{x_j}$ for each $j$ and 
\[
(\eta_B (b))^c=\bigcup_{j=1}^m(\eta_B (a_{x_j}))^c.
\]
That is, $b=\bigwedge_{j=1}^m a_{x_j}$ and thus $b\in A$ since $A$ is closed under finite meets and each $a_x\in A$.
\end{proof}

Further, it is easy to see that the quasiorders of the form $\preceq_A$ have the following characteristic property which we call compatibility.

\begin{definition}\label{def:compatible}
Let $B$ be a bounded distributive lattice, $X_B$ the dual space of $B$. A quasiorder $\preceq$ on $X_B$ is said to be \emph{compatible} provided it satisfies
\[
\forall x,y\in X_B\quad [x\not\preceq y\ \Rightarrow \exists a\in B\ (a\in F_y
\mbox{ and }a\not\in F_x\mbox{ and }\eta_B(a)\mbox{ is a $\preceq$-down-set})].
\]
\end{definition}

It is straight forward to show that $\preceq_{A_\preceq}=\preceq$ for any compatible quasiorder $\preceq$ on the dual of a bounded distributive lattice as compatibility easily implies that the corresponding quotient space is a Priestley space. Note that the assignments $E\mapsto A_E$ and $S\mapsto \preceq_S$ are both derived from the relation
\[
(x,y)\ R\ a \quad\text{ defined by }\quad a\in F_y\ \Rightarrow\ a\in F_x.
\]
and thus they form a Galois connection. To sum up we have the following result.
 
\begin{theorem}[\cite{Schmid02}]\label{prop:subalg duality}
Let $B$ be a bounded distributive lattice, $X_B$ the dual space of $B$. The assignments
\[
E\mapsto A_E=\{a\in B\mid\forall(x,y)\in E\ (a\in F_y\ \Rightarrow\ a\in F_x)\}
\]
for $E\subseteq X_B\times X_B$ and
\[
S\mapsto \preceq_S=\{(x,y)\in X_B\times X_B\mid\forall a\in S\ (a\in F_y\ \Rightarrow\ a\in F_x)\}
\]
for $S\subseteq B$ establish a Galois connection whose Galois closed sets are the compatible quasiorders and the bounded sublattices, respectively.
\end{theorem}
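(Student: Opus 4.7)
The plan is to observe first that the two assignments are induced by a single binary relation between $B$ and $X_B \times X_B$, and then to identify the Galois closed sets on each side separately.

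Explicitly, define $R \subseteq (X_B \times X_B) \times B$ by $(x,y)\,R\,a$ iff $a \in F_y \Rightarrow a \in F_x$. Then $A_E = \{a \in B \mid \forall (x,y) \in E\ (x,y)\,R\,a\}$ and $\preceq_S\, = \{(x,y) \mid \forall a \in S\ (x,y)\,R\,a\}$, so by the standard construction of the polarity associated with a binary relation, $(E \mapsto A_E, S \mapsto \preceq_S)$ is a Galois connection. The Galois closed sets are, by definition, the images of the two maps, and my task reduces to characterizing these images.

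For the $B$-side, I would first note that every $A_E$ is a bounded sublattice: it clearly contains $0$ and $1$ (since $0 \notin F_y$ always and $1 \in F_x$ always), and the defining implication $a \in F_y \Rightarrow a \in F_x$ is preserved under finite meets and joins because each $F_x$ is a filter and each $F_y$ a prime filter. Conversely, for any bounded sublattice $A \subseteq B$, Proposition~\ref{prop:quasiorder-down-sets} gives $A = A_{\preceq_A}$, exhibiting $A$ as Galois closed with witness $E =\, \preceq_A$.

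For the $X_B \times X_B$-side, I would first check that every $\preceq_S$ is a quasiorder (reflexivity and transitivity are immediate from the shape of the defining implication) and that it is compatible: if $x \not\preceq_S y$ then by definition some $a \in S$ satisfies $a \in F_y$ and $a \notin F_x$, and $\eta_B(a)$ is automatically a $\preceq_S$-down-set, since for any $x' \preceq_S y'$ with $a \in F_{y'}$ the defining condition forces $a \in F_{x'}$. The converse direction, that every compatible quasiorder $\preceq$ is of the form $\preceq_S$, is the heart of the proof: I claim $\preceq\, =\, \preceq_{A_\preceq}$. The inclusion $\preceq\, \subseteq\, \preceq_{A_\preceq}$ is immediate from the definitions. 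For the reverse inclusion, assume $x \not\preceq y$; then compatibility supplies $a \in B$ with $a \in F_y$, $a \notin F_x$, and $\eta_B(a)$ a $\preceq$-down-set. The down-set condition on $\eta_B(a)$ translates exactly to the statement $\forall (x',y') \in\, \preceq\, (a \in F_{y'} \Rightarrow a \in F_{x'})$, which says $a \in A_\preceq$; so this $a$ witnesses $x \not\preceq_{A_\preceq} y$.

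The main obstacle, as usual with Galois-connection characterization theorems of this kind, is identifying the correct notion of ``compatibility'' on the relation side so that the characterization comes out crisp. Once Definition~\ref{def:compatible} is in place, the verification is a clean translation between the clopen basic down-sets $\eta_B(a)$ of the Priestley topology and elements of $B$ belonging to $A_\preceq$, and the compactness-based argument has already been carried out inside the proof of Proposition~\ref{prop:quasiorder-down-sets}, which is what makes the $B$-side closure work.
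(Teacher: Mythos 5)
Your proposal is correct and follows essentially the same route as the paper: the Galois connection comes from the polarity of the relation $(x,y)\,R\,a \iff (a\in F_y\Rightarrow a\in F_x)$, the $B$-side closure is Proposition~\ref{prop:quasiorder-down-sets}, and the $X_B\times X_B$-side is the compatibility condition of Definition~\ref{def:compatible}. The one small divergence is that you verify $\preceq_{A_\preceq}\,=\,\preceq$ by a direct element-wise translation of the down-set condition, whereas the paper declares this ``straightforward'' via the observation that compatibility makes the quotient a Priestley space; your direct check is a perfectly good (arguably cleaner) way to discharge that step.
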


We note that, throughout, the special case of Stone duality for Boolean algebras corresponds to the case where the order is trivial. 

\begin{remark}\label{rem:Priestleyspace}
We also note that the Priestley space of a distributive lattice is actually the dual space of the free Boolean extension $D^-$ of $D$ equipped with the compatible (quasi)order (which happens to be a partial order in this case, see \cite[Proposition 8]{Gehrke13}) dual to the sublattice inclusion map $D\hookrightarrow D^-$. For more details on this, see  \cite[Theorem 5]{Gehrke13}.
\end{remark}

\subsection{Extended Priestley duality}\label{ExtDuality}

In extended Priestley duality \cite{Goldblatt89}, additional operations on a distributive lattice are captured by additional relational structure on the dual space, see also \cite{GehrkePriestley07a,GehrkePriestleyPP2}. Here we give a brief description of the relational dual of the additional operations we will be most concerned with. We illustrate with a binary operation but corresponding results hold for operations of any arity. It is easiest to start with an operation $\cdot:D\times D\to D$ preserving finite (including empty) joins in each coordinate. If $D$ is finite, as in the discrete duality setting, it is enough to know the operation on pairs of join-irreducible elements. In the setting of arbitrary bounded distributive lattices this corresponds to knowing the action of the operation on the prime filters. For this purpose we extend the operation to an operation on the filter lattice in the obvious way:
\begin{align*}
\cdot_{Filt}: Filt(D)\times Filt(D) & \to Filt(D)\\
                          (F,G)                  &\mapsto \langle F\cdot G\rangle_{Filt}
\end{align*} 
where 
$\langle F\cdot G\rangle_{Filt}
={\uparrow}\{\bigwedge_{i=1}^n(a_i\cdot b_i)\mid a_i\in F\mbox{ and }
b_i\in G, i=1,\ldots,n\}$ 
is the filter generated by the product of $F$ and $G$. The operation on filters will not in general map pairs of prime filters to prime filters but the restriction of the operation to pairs of prime filters may be encoded by the relation
\begin{align*}
R_{\,\cdot} &=\{(x,y,z)\in (X_D)^3\mid F_x\cdot_{Filt} F_y\geq F_z\}\\
                 &=\{(x,y,z)\in (X_D)^3\mid F_x\cdot F_y\subseteq F_z\}.
\end{align*} 
In the case where the original operation preserves finite joins in each coordinate one can show that one recovers the original operation as
\begin{align*}
ClopD(X_D)^2
\quad &\longrightarrow\quad ClopD(X_D)\\
             (U,V)\qquad        \quad&\mapsto\qquad R_{\,\cdot}[U,V,\underline{\ }]
                                                                   =\{z\in X_D\mid\exists x\in U,y\in V
                                                                   \ R_{\,\cdot}(x,y,z)\}.
\end{align*}
Further, it may be shown that the relations $R$ corresponding to binary operations that preserve finite joins in each coordinate are the ones satisfying the following three properties \cite{Goldblatt89}: (Notice that our last coordinate is the first coordinate in \cite{Goldblatt89})
\begin{enumerate}
	   \item $(\geq \times \geq) \circ R \circ {\geq} = R$;
	\item For each $x\in X$ the set $R[\_,\_,x]$ is closed;
	   \item For all $U,V$ clopen down-sets of $X$ the set $R[U,V,\_]$ is clopen.
\end{enumerate} 

For operations with other preservation properties one has to apply some order duality (that is, turn the lattice upside-down). For this to work it is important that all domain coordinates transform to joins in the codomain or all transform to meets. For example, for an operation $\bs:D\times D\to D$ that sends finite joins in the first coordinate to finite meets and finite meets in the second coordinate to finite meets (when one fixes the other coordinate), we must first extend $\bs$ to a function from $Filt(D)\times Idl(D)$ into $Idl(D)$ by setting
\[
	F\bs I= \langle\, a\bs b\mid a\in F\text{ and }b\in I\,\rangle_{Idl}
\]
for $F\in Filt(D)$ and $I\in Idl(D)$. The relation dual to $\bs$ is then
\begin{align*}
S_{\,\bs} & =\{(x,y,z)\in X^3\mid  F_x\bs I_z\leq I_y\}\\
              & =\{(x,y,z)\in X^3\mid  F_x\bs I_z\subseteq I_y\}.
\end{align*}
and the original operation $\bs$ is captured on clopen down-sets by 
\[
U\bs V=(S_{\,\bs}[U,\_,V^c])^c=\{y \mid \forall x,z\ [(x\in U\text{ and }S_{\,\bs}(x,y,z))\implies z\in V]\}.
\]
Furthermore, a relation $S$ is the dual of some operation $\bs$ which sends finite joins in the first coordinate to finite meets and finite meets in the second coordinate to finite meets if and only if it satisfies the following three properties:
\begin{enumerate}
	\item $(\geq \times \geq) \circ S \circ {\geq} = S$;
	\item For each $x\in X$ the set $S[\_,x,\_]$ is closed;
	\item For all $U$ clopen down-set of $X$ and $V$ clopen up-set of $X$, the set $S[U,\_,V]$ is clopen.
\end{enumerate}

In the sequel we will be applying these results in a situation where we have a family of operations $(\cdot, \bs,/)$\footnote{Sometimes we won't have all the operations of the residuated family available on the lattice.} that form a residuated family on a bounded distributive lattice $D$. That is, for all $a,b,c\in D$ we have 
\begin{align*}
	a\cdot b\leq c &\iff b\leq a\bs c\\
	                         & \iff a\leq c/b.
\end{align*}
In this case one can prove that all three operations are encoded on the dual space by a single relation $R$ which may be defined by any of the following equivalent conditions 
\begin{align*}
R(x,y,z) \iff F_x\cdot F_y\geq F_z
                         \iff F_x\bs I_z\leq I_y
                         \iff I_z/ F_y\leq I_x.
\end{align*}
Conversely, given a ternary relation $R$ on a Priestley space $X$, which is order-compatible so that it satisfies 
\[
(\geq \times \geq) \circ R \circ {\geq} = R,
\] 
we obtain, via discrete duality, a residuated family of maps on the lattice of down-sets of $X$ given by 
\begin{align*}
	S\cdot T & = R[S,T,\_] =\{z \mid \exists x,y\ [x\in S\text{ and } y\in T \text{ and } R(x,y,z)]\}\\ 
        S\bs T & =(R[S,\_,T^c])^c =\{y \mid \forall x,z\ [(x\in S\text{ and }R(x,y,z))\implies z\in T]\}\\
	T/S  & =(R[\_,S,T^c])^c =\{x\mid \forall y,z\ [(y\in S\text{ and } R(x,y,z))\implies z\in T]\}.
\end{align*}
However, the lattice of clopen down-sets may not be closed under some of these while being closed under others. In particular, a relation $R$ can be the topological dual for one of these operations while not being so for another one. This is determined by the topological properties of the relation $R$. As stated above, this relation $R$ is dual to an operation $\cdot$ on $D$  with the third coordinate as output variable if and only if
\begin{enumerate}
	  \item For each $x\in X$ the set $R[\_,\_,x]$ is closed;
	   \item For all $U,V$ clopen down-sets of $X$ the set $R[U,V,\_]$ is clopen.
\end{enumerate}
When this is the case we say that $R$ is Priestley-compatible for the last coordinate.

We state the topological properties for the residual operations in a definition as these are particularly central in this work and we will want to refer to them later.

\begin{definition} \label{def:topprores}
Let $X$ be a Priestley space and $R\subseteq X^n\times X$ an order-compatible relation on $X$. For $1\leq i\leq n$, we say that $R$ is Priestley-compatible with i as the output coordinate provided:
\begin{enumerate}
	\item For each $x\in X$ the set $R[\_,x,\_]$, where $x$ occurs in the $i$th coordinate, is closed;
	\item For all $U_1,\ldots, U_{i-1}$ and $V_{i+1}\ldots,V_n$ clopen down-set of $X$ and for all $W$ clopen up-set of $X$, the set $R[\overline{U},\_,\overline{V},W]$ is clopen.
\end{enumerate}
\end{definition}

In the setting of lattices with additional operations, we want homomorphisms to preserve both the lattice structure and the additional operations. The dual notion is known under the name of bounded morphism, see e.g. \cite{Goldblatt89}. This is the functional version of bisimulation in modal logic. 

\begin{definition}\label{def:bddmorphism}
Let $X$ and $Y$ be Priestley spaces, $R\subseteq X^{3}$ and $S\subseteq Y^{3}$ order-compatible relations on $X$ and $Y$, respectively. If $R$ and $S$ are Priestley-compatible with respect to the last coordinate, then we say that a continuous and order-preserving function $\varphi:X\to Y$ is a bounded morphism for these relations with respect to the last coordinate if and only if the following two properties, known as the \emph{Back} and \emph{Forth} properties, hold for all $x_1,x_2,x_3\in X$ and all $y_1,y_2\in Y$ 
\begin{itemize}
\item[]\hspace*{-.75cm}(Forth)\ $(R(x_1,x_2,x_3)\ \Rightarrow\  S(\varphi(x_1),\varphi(x_2),\varphi(x_3))\,)$;
\item[]\hspace*{-.75cm}(Back)\ $(S(y_1,y_2,\varphi(x_3))\ \Rightarrow \ 
                                                                \exists x_1,x_2\ [\,(y_1,y_2)\geq(\varphi(x_1),\varphi(x_2))\text{ and }R(x_1,x_2,x_3)\,]\,).$
\end{itemize}

Similarly, if $R\subseteq X^{3}$ and $S\subseteq Y^{3}$ are Priestley-compatible with respect to the second coordinate (that is, they are duals of operations of the type $\bs$), then we say that a continuous and order-preserving function $\varphi:X\to Y$ is a bounded morphism for these relations with respect to the second coordinate if and only if the following two properties hold for all $x_1,x_2,x_3\in X$ and all $y_1,y_3\in Y$ 
\begin{itemize}
\item[]\hspace*{-.75cm}(Forth)\ $(R(x_1,x_2,x_3)\ \Rightarrow\  S(\varphi(x_1),\varphi(x_2),\varphi(x_3))\,)$;
\item[]\hspace*{-.75cm}(Back)\ $(S(y_1,\varphi(x_2),y_3) \Rightarrow 
                                                      \exists x_1,x_3\ [\,y_1\geq \varphi(x_1), R(x_1,x_2,x_3),\text{ and }\varphi(x_3)\geq y_3]).$
\end{itemize}

If $R\subseteq X^{3}$ and $S\subseteq Y^{3}$ are Priestley-compatible with respect to the first coordinate (that is, they are duals of operations of the type $/$), then we say that a continuous and order-preserving function $\varphi:X\to Y$ is a bounded morphism for these relations with respect to the first coordinate if and only if the following two properties hold for all $x_1,x_2,x_3\in X$ and all $y_2,y_3\in Y$ 

\begin{itemize}
\item[]\hspace*{-.75cm}(Forth)\ $(R(x_1,x_2,x_3)\ \Rightarrow\  S(\varphi(x_1),\varphi(x_2),\varphi(x_3))\,)$;
\item[]\hspace*{-.75cm}(Back)\ $(S(\varphi(x_1),y_2,y_3) \Rightarrow  
                                                      \exists x_2,x_3\ [y_2\geq \varphi(x_2),R(x_1,x_2,x_3),\text{ and }\varphi(x_3)\geq y_3]).$
\end{itemize}

Finally, we note that in the special case where $\varphi$ is surjective, and thus corresponds to a compatible quasiorder $\preceq$ on $X$, if the quotient map is a bounded morphism for relations $R$ on $X$ and $S$ on the quotient Priestley space $(X/{\equiv},{\preceq}/{\equiv}, \pi/{\equiv})$ with respect to any one of their coordinates, then $S$ is the quotient relation, $R/{\preceq}$, in the sense that  for all $(\overline{x},x)\in X^n\times X$ we have 
\[
([x_1],\ldots,[x_{n}])\, S\, [x] \ \iff\ \overline{x}\, [\succeq^n\circ\,R\,\circ\succeq]\, x.
\] 
\end{definition}

\bigskip
\bigskip
%

\section{Topological algebras as dual spaces}\label{sec:TopAlg}

In this section we study the relationship between extended dual spaces and topological algebras. As we have seen in the previous section, an extended dual space is a Boolean space, or a Priestley space, with additional relations having some topological, and, in the case of Priestley spaces, order-theoretic properties. Boolean-topological algebras are Boolean spaces equipped with continuous operations. Our main result in this section is that Boolean-topological algebras are precisely the extended dual spaces for which the additional relations are functional. Thus Boolean-topological algebras are special extended dual spaces. In the Priestley setting the relationship is a bit more complicated: All Priestley topological algebras are extended dual spaces but they do not comprise all the functional ones.

Once we have established these results, we characterize those distributive lattices with additional operations whose extended dual spaces have functional relations. Finally we identify the lattice theoretic duals of morphisms and quotients of Boolean and Priestley topological algebras. In particular we show that the dual of a Boolean-topological quotient is what we will call a residuation ideal. 

\subsection{Functional dual relations}		

Let $X$ be a Priestley space and $R$ an $(n+1)$-ary relation on $X$ that is order-compatible. As we have seen in the previous section, $R$ corresponds via discrete duality to a residuated family of $n$-ary operations on the $DL^+$, $\mathcal D(X)$, consisting of all down-sets of $X$. Depending on the topological properties of this relation, it may then be the dual of any number of these operations restricted to the dual lattice. In any case, order compatibility makes it impossible for the relation to be functional unless the order is trivial (the Boolean case) or the value of the function is always minimal in $X$. In order to encompass a richer class of structures we give the following relaxed definition of functionality for relations on Priestley spaces.

\begin{definition}\label{def:functrel}
Let $X$ be a Priestley space and $R\subseteq X^n\times X$ an $(n+1)$-ary relation on $X$. We say that $R$ is \emph{functional} provided there is an $n$-ary operation, $f$, on $X$ such that for all $\overline{x}\in X^n$ and all $z\in X$, we have $R(\overline{x},z)$ if and only if $f(\overline{x})\geq z$. 
\end{definition}

Note that if $R$ is functional then the corresponding operation on $X$ is uniquely given by $f(\overline{x})=\max\{z\in X\mid R(\overline{x},z)\}$. The truth of the following proposition is easy to verify.

\begin{proposition}\label{prop:order-comp}
Let $X$ be a Priestley space and $R$ a functional relation on $X$ with $f$ the corresponding operation. Then $f$ is order preserving if and only if $R$ is order-compatible. 
\end{proposition}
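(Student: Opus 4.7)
The plan is to unpack the two definitions and then verify each direction by a short direct argument; there is essentially no topological content here, only the interaction between the functional encoding of $R$ via $f$ and the order-compatibility condition on $R$.

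For the forward implication, I would assume $f:X^n\to X$ is order preserving and take $\overline{x},\overline{x}'\in X^n$ and $x,x'\in X$ with $\overline{x}'\geq\overline{x}$, $\overline{x}Rx$, and $x\geq x'$. Translating $\overline{x}Rx$ through functionality gives $f(\overline{x})\geq x$, and then monotonicity of $f$ together with transitivity of $\geq$ yields
\[
f(\overline{x}')\geq f(\overline{x})\geq x\geq x',
\]
so $R(\overline{x}',x')$ by functionality again. This is exactly the order-compatibility condition from Definition~\ref{def:compatible}'s analogue for $(n+1)$-ary relations introduced just before the statement.

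For the converse, assume $R$ is order-compatible and let $\overline{x}\leq\overline{x}'$ in $X^n$. The key move is to feed the tautological instance $R(\overline{x},f(\overline{x}))$ (which holds because $f(\overline{x})\geq f(\overline{x})$ by functionality) into order-compatibility with the pair $\overline{x}\leq\overline{x}'$ and $f(\overline{x})\geq f(\overline{x})$. This yields $R(\overline{x}',f(\overline{x}))$, which unwinds via functionality to $f(\overline{x}')\geq f(\overline{x})$, as required.

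There is no genuine obstacle in this proof; the only point that requires a bit of care is remembering that functionality supplies $R$ with exactly the instance $R(\overline{x},f(\overline{x}))$ for free, which is what lets one bootstrap order-compatibility into monotonicity of $f$ in the backward direction. Both implications are purely symbolic manipulations of the two defining biconditionals, so a three- or four-line proof should suffice.
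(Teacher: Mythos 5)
Your proof is correct and is precisely the direct verification the paper has in mind (the paper omits the argument, stating only that the proposition is ``easy to verify''): both directions are the expected unwinding of the functionality equivalence $R(\overline{x},z)\iff f(\overline{x})\geq z$ against the order-compatibility condition, with the backward direction correctly exploiting the free instance $R(\overline{x},f(\overline{x}))$. The only cosmetic slip is the pointer to Definition~\ref{def:compatible}, which defines compatible quasiorders; the relevant notion is the order-compatibility of $(n+1)$-ary relations defined in Section~\ref{subsec:discdual}, which is what you actually use.
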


The following establishes a link between continuity of an operation on a Priestley space and the corresponding functional relation being the topological dual of the residual operations given by the relation.

\begin{proposition}\label{prop:cont=Kripke}
Let $X$ be a Priestley space and $R\subseteq X^n\times X$ an order-compatible functional relation on $X$ with $f$ the corresponding operation. Then the following conditions are related by {\rm(1)}$\,\Rightarrow${\rm(2)}, {\rm (2)}$\,\Rightarrow${\rm(3)}, and {\rm (3)}$\,\Rightarrow${\rm(4)}.
\begin{enumerate}
	\item The operation $f$ is continuous with respect to the Priestley topology.
	\item For each $i$ with $1\leq i\leq n$ and all $x \in X$, $R[\underline{\ \ \ },x,\underline{\ \ \ }]$ (where $x$ is in the $i$th spot) is closed in $X^n$ and for all clopen down-sets $U_j,V\subseteq X$ the relational image $R[U_1,\ldots,U_{i-1},\_,U_{i+1},\ldots,V^c]$ is clopen.
	\item There is an $i$ with $1\leq i\leq n$ such that for all $x \in X$, $R[\underline{\ \ \ },x,\underline{\ \ \ }]$ (where $x$ is in the $i$th spot) is closed in $X^n$ and for all clopen down-sets $U_j,V\subseteq X$ the relational image $R[U_1,\ldots,U_{i-1},\_,U_{i+1},\ldots,V^c]$ is clopen.
        \item The operation $f$ is continuous with respect to the spectral topology.
\end{enumerate}
\end{proposition}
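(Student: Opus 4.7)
The plan is to prove the implications in the order $(1)\Rightarrow(2)\Rightarrow(3)\Rightarrow(4)$, with $(2)\Rightarrow(3)$ immediate. For $(1)\Rightarrow(2)$: functionality and order-compatibility give $R=\{(\overline{x},z)\in X^n\times X:z\le f(\overline{x})\}$, so $R$ is the preimage of the Priestley-closed order $\le$ under the continuous map $(\overline{x},z)\mapsto(f(\overline{x}),z)$, hence closed; in particular every section $R[\underline{\ \ },x,\underline{\ \ }]$ is closed. For clopenness, $R[U_1,\ldots,U_{i-1},\underline{\ \ },U_{i+1},\ldots,U_n,V^c]$ equals the projection onto the $i$th coordinate of the Priestley-clopen set $(\prod_{j<i}U_j\times X\times\prod_{j>i}U_j)\cap f^{-1}(V^c)$, and projections along a compact factor of a zero-dimensional compact Hausdorff space send clopens to clopens.

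For $(3)\Rightarrow(4)$, fix an index $i$ satisfying $(3)$. Given a clopen down-set $V$ and a point $\overline{a}$ with $f(\overline{a})\in V$, I would construct a product of clopen down-sets around $\overline{a}$ lying in $f^{-1}(V)$ in two stages. Stage A addresses the $n-1$ coordinates different from $i$: consider $K:=\{\overline{y}\in X^{n-1}:f(\overline{y},a_i)\notin V\}$, writing $f(\overline{y},a_i)$ for the evaluation with $a_i$ inserted in slot $i$. Using that $R[\underline{\ \ },a_i,\underline{\ \ }]$ is closed in $X^n$ (first half of $(3)$) and that $V^c$ is closed, the set $R[\underline{\ \ },a_i,\underline{\ \ }]\cap(X^{n-1}\times V^c)$ is closed; its projection onto the compact $X^{n-1}$ is closed and equals $K$, because $V$ being a down-set means the existence of $z\in V^c$ with $z\le f(\overline{y},a_i)$ is equivalent to $f(\overline{y},a_i)\in V^c$. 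Order-preservation of $f$ together with $V$ being a down-set makes $K$ an up-set in the product order, and $\overline{a}^{(-i)}\notin K$. Priestley separation in $X^{n-1}$ produces a clopen down-set containing $\overline{a}^{(-i)}$ and disjoint from $K$; since clopen down-sets of a product Priestley space are finite unions of rectangles of clopen down-sets, one such rectangle $\prod_{j\neq i}U_j$ contains $\overline{a}^{(-i)}$, yielding clopen down-sets $U_j\ni a_j$ with $f(\overline{x},a_i)\in V$ for every $\overline{x}\in\prod_{j\neq i}U_j$.

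Stage B completes the construction in the $i$th slot: by $(3)$ the set $B:=R[U_1,\ldots,U_{i-1},\underline{\ \ },U_{i+1},\ldots,U_n,V^c]$ is clopen, and order-compatibility of $R$ (equivalently, monotonicity of $f$) together with $V$ being a down-set makes $B$ an up-set, hence a clopen up-set. Stage A ensures $a_i\notin B$, so $U_i:=X\setminus B$ is a clopen down-set containing $a_i$. For any $y\in U_i$ and $\overline{x}\in\prod_{j\neq i}U_j$, $y\notin B$ rules out $f(\overline{x},y)\in V^c$, so $f(\overline{x},y)\in V$; thus $\prod_j U_j\subseteq f^{-1}(V)$, exhibiting $f^{-1}(V)$ as spectrally open at $\overline{a}$. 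The main obstacle is precisely this $(3)\Rightarrow(4)$ argument: a single clopen-image condition in one coordinate must be bootstrapped into genuine spectral continuity, and the subtle point is to first absorb the other $n-1$ coordinates by Priestley-separating $\overline{a}^{(-i)}$ from the closed up-set of bad tuples inside $X^{n-1}$, and only afterwards use the clopen image condition and the up-set structure coming from monotonicity to excise a clopen down-set for the distinguished $i$th slot.
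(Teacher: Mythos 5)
Your proof is correct and follows essentially the same route as the paper's: $(1)\Rightarrow(2)$ via the identity $f^{-1}(V^c)=R[\underline{\ \ },V^c]$ and projection along compact factors, and $(3)\Rightarrow(4)$ by first shrinking the $n-1$ non-distinguished coordinates using the closed-section condition (the set you call $K$ is exactly the paper's $f_{x_1}^{-1}(V^c)$) and then taking the complement of the clopen up-set $R[\overline{U},\underline{\ \ },V^c]$ as the clopen down-set for the $i$th slot. The only cosmetic difference is in how closedness of the sections is obtained: you exhibit $R$ as the preimage of the closed order under $(\overline{x},z)\mapsto(f(\overline{x}),z)$, whereas the paper takes the down-closure of the graph of $f_{x_1}$ in $(X^{\rm op})^{n-1}\times X$.
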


\begin{proof} 
Assuming that (1) holds, we just prove (2) for $i=1$ to minimize notation. If $f$ is continuous in the Priestley topology, then, for each $x_1\in X$, the function $f_{x_1}:X^{n-1}\to X$ given by $\overline{y}\mapsto f(x_1,\overline{y})$ is continuous in the Priestley topology and thus its graph, $G(f_{x_1})$, is closed in $X^n$. Also notice that if $X^{\rm op}$ denotes the Priestley space obtained by reversing the order of $X$, then $R[x_1,\underline{\ \ \ }]$ is the down-set of $G(f_{x_1})$ in the order of the space $(X^{\rm op})^{n-1}\times X$. Now using the fact that the product of Priestley spaces is a Priestley space, and that down-sets of closed sets are closed in Priestley spaces \cite[Exercise11.14(ii)]{DaveyPriestley02}, we conclude that $R[x_1,\underline{\ \ \ }]$ is closed. 

Let $U_2,\ldots, U_n,V$ be clopen down-sets in $X$. By continuity of $f$, the set $f^{-1}(V^c) $ is clopen. Also, because $V^c$ is an up-set,  $f^{-1}(V^c) = R[\underline{\ \ \ },V^c]$. Now let $\pi:X\times X^{n-1}\to X$ be the projection onto the first coordinate, then
\vspace*{-.1cm}
\[
	R[\_,U_2,\ldots, U_n,V^c]=\pi(R[\underline{\ \ \ },V^c]\cap(X\times U_2\times\ldots\times U_n)).
\]
\noindent
Since the intersection of clopen sets is clopen, projections of open sets are open, and projections of closed sets along compact Hausdorff spaces are closed, it follows that $R[\_,U_2,\ldots, U_n,V^c]$ is clopen. This completes the proof of (1) implies (2).

In order to prove that (3) implies (4), we assume (3) holds for $i=1$ and prove that $f$ is continuous with respect to the spectral topology. To this end, let $(x_1,\ldots,x_n)\in X^n$ with $z=f(x_1,\ldots,x_n)\in V$, where $V$ is a clopen down-set in $X$. Since $R[x_1,\underline{\ \ \ }]$ is closed and $V$ is clopen, it follows that $R[x_1,\underline{\ \ \ }]\cap(X^n\times V^c)$ is closed. Furthermore, since the projection $\pi':X^{n-1}\times X\to X^{n-1}$ is a projection along a compact space, it is a closed map and thus 
\[
  \pi'(R[x_1,\underline{\ \ \ }]\cap(X^{n-1}\times V^c))
\]
is closed. Notice that since $V^c$ is an up-set we have 
\begin{align*}
  \pi'(R[x_1,\underline{\ \ \ }]\cap(X^{n-1}\times V^c))
  &= \{(y_2,\ldots,y_n)\in X^{n-1}\mid\exists z\in V^c\ f(x_1,y_2,\ldots,y_n)\geq z\}\\
  &=  \{(y_2,\ldots,y_n)\in X^{n-1}\mid f(x_1,y_2,\ldots,y_n)\in V^c\}\\
  &=f_{x_1}^{-1}(V^c)
\end{align*}
where $f_{x_1}:X^{n-1}\to X$ is is the restriction of $f$ as defined above. It thus follows that $f_{x_1}^{-1}(V)$ is open. Since it is also a down-set and $\overline{x}=(x_2,\ldots,x_n)\in f_{x_1}^{-1}(V)$, there are clopen down-sets $U_2,\ldots, U_n$ with $\overline{x}\in U_2\times\ldots\times U_n\subseteq  f_{x_1}^{-1}(V)$. We have $x_1\not\in R[\_,U_2,\ldots, U_n,V^c]$, and thus $x_1\in (R[\_,U_2,\ldots, U_n,V^c])^c=U_1$ which is a 
down-set and is in addition clopen by condition (3). That is, $(x_1,\ldots,x_n)\in U_1\times U_2\times\ldots\times U_n$ and each $U_i$ is a clopen down-set. Furthermore, if $(y_1,\ldots,y_n)\in U_1\times U_2\times\ldots\times U_n$ then $y_1\in U_1=(R[\_,U_2,\ldots, U_n,V^c])^c$ 
and thus 
\vspace*{-.1cm}
\begin{align*}
y_1\not\in\  & R[\_,U_2,\ldots, U_n,V^c]\\
                   & =\{y'\mid\exists(y'_2,\ldots,y'_n)\in U_2\times\ldots\times U_n\ f(y',y'_2,\ldots,y'_n)\not\in V\}.
\end{align*}
\noindent
That is, for all $(y'_2,\ldots,y'_n)\in U_2\times\ldots\times U_n$ we have $f(y_1,y'_2,\ldots,y'_n)\in V$ and in particular $f(y_1,y_2,\ldots,y_n)\in V$. We have shown then that $(x_1,\ldots,x_n)\in U_1\times U_2\times\ldots\times U_n\subseteq f^{-1}(V)$ and thus that $f$ is continuous in the spectral topology.
\end{proof}

In the Boolean case, we obtain a stronger result since the Priestley and the spectral topologies are one and the same so that conditions (1) and (4) are equivalent. 

\begin{corollary}\label{cor:cont=Kripke}
Let $X$ be a Boolean space and let $f$ be an $n$-ary operation on $X$ and suppose $R\subseteq X^n\times X$ 
is the graph of $f$. Then the following conditions are equivalent: 
\begin{enumerate}
	\item There is an $i$ with $1\leq i\leq n$ such that $R$ is the extended Stone dual of the operation 
	$(U_1,\ldots,U_n)\mapsto R[U_1,\ldots,\underline{\ \ },\ldots,U_n]$ (with the co-domain slot in the $i$th place) 
	on the dual Boolean algebra.     
	\item For each $i$ with $1\leq i\leq n$, the relation $R$ is the extended Stone dual of the operation 
	$(U_1,\ldots,U_n)\mapsto R[U_1,\ldots,\underline{\ \ },\ldots,U_n]$  (with the co-domain slot in the $i$th place)
	on the dual Boolean algebra.  
	\item The operation $f$ is continuous.
\end{enumerate}
\end{corollary}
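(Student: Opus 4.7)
The plan is to reduce this corollary to Proposition \ref{prop:cont=Kripke}, exploiting the fact that in the Boolean case the order on $X$ is trivial, so several of the distinctions that made that proposition only a chain of implications collapse.

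First I would record the simplifications that apply when $X$ is a Boolean space viewed as a Priestley space with the discrete order. Every subset is both an up-set and a down-set, so ``clopen down-set'' just means ``clopen''; every relation on $X$ is automatically order-compatible; and the Priestley topology, generated by the subbasis $\{\eta_D(a),(\eta_D(a))^c\mid a\in D\}$, coincides with the Stone (spectral) topology generated by $\{\eta_D(a)\mid a\in D\}$, since each $\eta_D(a)$ is already clopen in the Stone space of a Boolean algebra. Consequently, conditions (1) and (4) of Proposition \ref{prop:cont=Kripke} are the \emph{same} statement in the Boolean setting, and the implications $(1)\Rightarrow(2)\Rightarrow(3)\Rightarrow(4)$ collapse to a full set of equivalences among (1)--(4).

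Next I would translate between the conditions of the corollary and those of Proposition \ref{prop:cont=Kripke}. By the discussion of extended Stone duality in Section \ref{ExtDuality}, a relation $R\subseteq X^n\times X$ is the extended Stone dual of the operation $(U_1,\ldots,U_n)\mapsto R[U_1,\ldots,\underline{\ \ },\ldots,U_n]$ (with output slot $i$) precisely when, for every $x\in X$, $R[\_,x,\_]$ (with $x$ in the $i$th position) is closed and, for all clopens $U_1,\ldots,U_{i-1},U_{i+1},\ldots,U_n,V$, the image $R[U_1,\ldots,\_,\ldots,U_n,V^c]$ is clopen. In the Boolean case this is exactly the topological compatibility condition appearing in clauses (2) and (3) of Proposition \ref{prop:cont=Kripke}. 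Hence Corollary condition (1) is Proposition condition (3), Corollary condition (2) is Proposition condition (2), and Corollary condition (3) (continuity of $f$) is Proposition condition (1) (equivalently (4)).

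With these identifications in place, the proof closes immediately: $(2)\Rightarrow(1)$ is trivial; $(1)\Rightarrow(3)$ is Proposition \ref{prop:cont=Kripke}$\,(3)\Rightarrow(4)$ together with the identification of the Priestley and spectral topologies in the Boolean case; and $(3)\Rightarrow(2)$ is Proposition \ref{prop:cont=Kripke}$\,(1)\Rightarrow(2)$, which already yields the condition uniformly in $i$. There is no real obstacle here — the whole content of the corollary is that the $T_1$ separation available in the Boolean case removes the gap between Priestley and spectral continuity that forced Proposition \ref{prop:cont=Kripke} to be only a chain of implications, so the only point that requires care is the bookkeeping that matches the ``extended Stone dual of the relational-image operation'' formulation to the compatibility conditions of the preceding proposition.
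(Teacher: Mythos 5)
Your proposal is correct and follows essentially the same route as the paper: the paper derives the corollary from Proposition~\ref{prop:cont=Kripke} by the single observation that in the Boolean case the Priestley and spectral topologies coincide, so conditions (1) and (4) of that proposition agree and the chain of implications closes into a cycle of equivalences. Your additional bookkeeping matching the ``extended Stone dual'' formulation to the compatibility conditions is accurate and merely makes explicit what the paper leaves implicit.
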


Proposition~\ref{prop:cont=Kripke} and its corollary allow us to relate extended dual spaces and the standard notion of topological algebras. 

\begin{definition}
Given an operational type $\tau$, a \emph{topological algebra of type $\tau$} is an algebra of type $\tau$ in the category of topological spaces. That is, it is a topological space equipped with an algebraic structure of type $\tau$ for which each basic operation is continuous (in the case of an $n$-ary operation we equip the domain with the product topology). Homomorphisms of topological algebras are maps which are simultaneously homomorphisms for the algebra structure and continuous for the topological structure. Isomorphisms must also be homeomorphisms for the topological part of the structure. A topological algebra is said to be a \emph{Boolean-topological algebra} provided the underlying topological space is a Boolean space, i.e., it is compact Hausdorff with a basis of clopen sets. Finally, a \emph{Priestley topological algebra} is an algebra in the category of Priestley spaces. That is, it is a Priestley space equipped with an algebra structuresuch that each basic operation of the algebra is not only continuous but also order preserving. The homomorphisms are algebra homomorphisms that are continuous and order preserving, whereas isomorphisms also have to be homeomorphisms for the topological structure and isomorphisms for the order structure. 
\end{definition}

Applying the implications (1)$\Rightarrow$(2) and/or (1)$\Rightarrow$(3), and noticing that conditions (2) and (3) are precisely the $n$-ary versions of the conditions for being dual to a residual operation given in Definition~\ref{def:topprores}, we obtain the following corollary of Proposition~\ref{prop:cont=Kripke}.

\begin{corollary}
Every Priestley topological algebra is the dual space of some bounded distributive lattice with additional operations.
\end{corollary}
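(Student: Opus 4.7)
The plan is to take the underlying Priestley space of the given topological algebra and build the lattice from it by ordinary Priestley duality, while converting each basic operation into a relation that fits the framework of extended duality developed in Section~\ref{ExtDuality}. Concretely, given a Priestley topological algebra $(X,\leq,\tau,\{f_\alpha\}_{\alpha\in I})$, I would let $D=ClopD(X,\leq,\tau)$. By ordinary Priestley duality $X$ is the dual space of the bare lattice $D$, so the only thing to arrange is a family of additional operations on $D$ whose dual relational structure on $X$ encodes the algebraic operations $f_\alpha$.

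For each basic operation $f_\alpha\colon X^{n_\alpha}\to X$, I would introduce its \emph{graph relation} $R_{f_\alpha}\subseteq X^{n_\alpha}\times X$ defined by
\[
R_{f_\alpha}(\overline{x},z)\iff f_\alpha(\overline{x})\geq z,
\]
exactly as in Definition~\ref{def:functrel}. Since $f_\alpha$ is assumed to be order preserving, Proposition~\ref{prop:order-comp} gives that $R_{f_\alpha}$ is order-compatible. Since $f_\alpha$ is moreover continuous for the Priestley topology, the implication (1)$\Rightarrow$(2) in Proposition~\ref{prop:cont=Kripke} delivers, for every choice of coordinate $i$ with $1\leq i\leq n_\alpha$, both closedness of the fibres $R_{f_\alpha}[\_,x,\_]$ (with $x$ in the $i$th slot) and clopenness of the relational images $R_{f_\alpha}[U_1,\dots,U_{i-1},\_,U_{i+1},\dots,U_{n_\alpha},V^c]$ for clopen down-sets $U_j,V$.

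The crucial observation is that these are exactly the $n_\alpha$-ary versions of the topological conditions in Definition~\ref{def:topprores}, i.e., $R_{f_\alpha}$ is Priestley-compatible with $i$ as output coordinate. By the discussion preceding Definition~\ref{def:topprores} this means $R_{f_\alpha}$ is the extended Priestley dual of a residual operation $g_\alpha\colon D^{n_\alpha}\to D$ on the clopen down-sets (pick any $i$, e.g.\ $i=1$, and take $g_\alpha$ to be the corresponding residual $(U_1,\dots,U_{n_\alpha})\mapsto (R_{f_\alpha}[\_,U_2,\dots,U_{n_\alpha},U_1^c])^c$). Doing this for every basic operation yields a bounded distributive lattice $(D,\{g_\alpha\}_{\alpha\in I})$ with additional operations whose extended Priestley dual is $(X,\leq,\tau,\{R_{f_\alpha}\}_{\alpha\in I})$; since each $R_{f_\alpha}$ is functional and recovers $f_\alpha$ via $f_\alpha(\overline{x})=\max\{z\mid R_{f_\alpha}(\overline{x},z)\}$, this extended dual space is precisely the original Priestley topological algebra.

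The content of the argument is thus concentrated entirely in translating ``continuous + order preserving'' for $f_\alpha$ into ``Priestley-compatible with some coordinate as output'' for $R_{f_\alpha}$, and that translation is supplied in one stroke by Proposition~\ref{prop:cont=Kripke}; the rest is bookkeeping, matching conditions (2) of that proposition to Definition~\ref{def:topprores}. The only subtle point worth pausing on is that one should not try to make $f_\alpha$ itself correspond to an operation on $D$ with the original output coordinate (this would require the down-set of $f_\alpha(\overline{U})$ to be clopen, which need not hold); rather, one uses a residual as the dual operation, which is why working with extended duality in its residuated form is essential.
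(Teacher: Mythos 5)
Your proposal is correct and follows essentially the same route as the paper: the paper obtains this corollary by applying the implication (1)$\Rightarrow$(2) of Proposition~\ref{prop:cont=Kripke} to the order-compatible graph relations $R_{f}(\overline{x},z)\iff f(\overline{x})\geq z$ of the basic operations and observing that the resulting conditions are exactly those of Definition~\ref{def:topprores}, so that each relation is the extended Priestley dual of a residual operation on the lattice of clopen down-sets. Your added remark about why one must use the residuals rather than the forward image operation is a sound observation, consistent with the paper's later discussion of when the forward operation is dualizable.
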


\begin{corollary}\label{cor:charBootopalg}
The Boolean-topological algebras are precisely the extended Boolean dual spaces of Boolean algebras with residuation operations for which the dual relations are functional.
\end{corollary}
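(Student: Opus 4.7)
The plan is to derive both directions as immediate consequences of Corollary \ref{cor:cont=Kripke}. In the Boolean setting the partial order on the dual space is trivial, so order-compatibility of a relation is automatic, and ``functional'' in the sense of Definition \ref{def:functrel} reduces exactly to being the graph of an ordinary $n$-ary operation on $X$.

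For the forward direction, I start with a Boolean-topological algebra $(X,(f_\lambda))$, let $B=\mathrm{Clop}(X)$ be its dual Boolean algebra, and let $R_\lambda$ be the graph of each basic operation $f_\lambda$. Continuity of $f_\lambda$ is condition (3) of Corollary \ref{cor:cont=Kripke}, hence condition (2) holds as well: for every output slot $i$ the operation $(U_1,\ldots,U_{n_\lambda})\mapsto R_\lambda[U_1,\ldots,\underline{\ \ },\ldots,U_{n_\lambda}]$ (with $\underline{\ \ }$ in position $i$) is well-defined on $B$ and has $R_\lambda$ as its extended Stone dual. By the relation-to-residuation correspondence made explicit at the end of Section \ref{ExtDuality}, the $n_\lambda+1$ operations thus obtained assemble into an operator together with all of its residuals, all sharing the single dual relation $R_\lambda$. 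Equipping $B$ with this residuated family for each $\lambda$ exhibits $(X,(f_\lambda))$ as the extended Stone dual space of a Boolean algebra with residuation operations whose dual relations are functional.

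For the converse, I start with a Boolean algebra $B$ carrying residuation operations whose common dual relation $R_\lambda$ on $X=X_B$ is functional, and let $f_\lambda$ be the unique operation on $X$ with $R_\lambda=\mathrm{graph}(f_\lambda)$ (Definition \ref{def:functrel}). Since $R_\lambda$ is by hypothesis the extended Stone dual of the operator in the residuated family, condition (1) of Corollary \ref{cor:cont=Kripke} is satisfied for at least one choice of output slot; the corollary then yields (3), giving continuity of $f_\lambda$. Hence $(X,(f_\lambda))$ is a Boolean-topological algebra.

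The only subtle point, which I expect to be the main obstacle to present cleanly, is justifying that the collection of operations produced by varying the output slot in the forward direction genuinely constitutes a residuated family in the algebraic sense rather than an unrelated list of operators. This is exactly the content of the relation-to-residuation correspondence recalled in Section \ref{ExtDuality}, where a single $(n+1)$-ary relation simultaneously encodes an operator and its residuals via the formulas $R[\overline{U},\underline{\ \ }]$, $(R[\overline{U},\underline{\ \ },\overline{V},W^c])^c$, and so on; everything else is routine transport along the duality and requires no new construction beyond Corollary \ref{cor:cont=Kripke}.
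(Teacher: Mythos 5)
Your argument is correct and follows the paper's own route: the corollary is obtained by reading Corollary~\ref{cor:cont=Kripke} in both directions, using that in the Boolean case order-compatibility is vacuous, functionality of $R_\lambda$ means being the graph of $f_\lambda$, and conditions (2)--(3) of that corollary are exactly the statement that $R_\lambda$ is the extended Stone dual of the residuation operations on $B$. One small inaccuracy: you assert that all $n_\lambda+1$ relational-image operations, including the forward operator $R_\lambda[U_1,\ldots,U_{n_\lambda},\underline{\ \ }]$, are well-defined on $B$, but $B$ need not be closed under forward images unless $f_\lambda$ is an open map (cf.\ Example~\ref{ex:dualnotpresres}); only the $n_\lambda$ residuals are guaranteed, which is all that the statement requires.
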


Proposition~\ref{prop:cont=Kripke} establishes a connection between the duals of \emph{residual} operations and continuous maps. One may wonder what it takes for the forward image map to be the dual of an operation. Using the condition, as given in Section~\ref{sec:duality}, on a relation equivalent to it being the dual of the \emph{forward image} operation, we obtain the following requirements in the case of a functional relation on a Boolean space:
\begin{enumerate}
\item For all $x\in X$ the preimage $f^{-1}(x)$ is closed;
\item The forward image of a tuple of clopens is clopen.  
\end{enumerate}
Without continuity this is not a very natural condition for a map between topological spaces. However, we do obtain the following useful corollary.

\begin{corollary}
Let $X$ be a Boolean-topological algebra, $B$ the dual Boolean algebra, and $f$ one of the basic operations of $X$.  Then $f$ is an open mapping if and only if $B$ is closed under the forward operation $(U_1,\ldots,U_n)\mapsto f[U_1\times\ldots\times U_n]$, and in this case the graph $R$ of $f$ is also the relational dual to this forward operation on $B$.
\end{corollary}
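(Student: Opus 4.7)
My plan is to deduce both halves of the corollary by combining a short topological argument with the characterization, stated immediately before the corollary, of when the graph of a map between Boolean spaces is the extended Stone dual of a forward-image operation.

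For the biconditional, I would first suppose $f$ is open. Given clopens $U_i \in B$, the product $U_1 \times \cdots \times U_n$ is clopen in the Boolean space $X^n$ and hence compact; continuity of $f$ sends it to a compact, so closed, subset of $X$, while openness sends it to an open subset, yielding a clopen set. Conversely, suppose $B$ is closed under $\Phi:(U_1,\ldots,U_n)\mapsto f[U_1\times\cdots\times U_n]$. Since rectangles of clopens form a basis for the product topology on $X^n$, every open $W \subseteq X^n$ is a union $\bigcup_\alpha U_1^\alpha \times \cdots \times U_n^\alpha$ of such rectangles with $U_i^\alpha \in B$, and since forward image commutes with unions, $f[W] = \bigcup_\alpha \Phi(U_1^\alpha,\ldots,U_n^\alpha)$ is a union of clopens, hence open.

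For the identification of $R$ with the extended Stone dual of $\Phi$, the graph $R$ of $f$ always satisfies condition~(1) stated just before the corollary: the fibre $f^{-1}(x)$, which is $R[\_,\ldots,x,\_]$ with $x$ in the output slot, is closed by continuity of $f$ and the Hausdorffness of $X$. Under the equivalent hypothesis that $B$ is $\Phi$-closed, $R$ also satisfies condition~(2): for clopen $U_i$, the set $R[U_1,\ldots,U_n,\_]$ is exactly $f[U_1\times\cdots\times U_n] = \Phi(U_1,\ldots,U_n)$, which is clopen by assumption. These are precisely the two requirements from Section~\ref{ExtDuality} for $R$ to be the extended Stone dual of a join-preserving $n$-ary operation on $B$, and the recovery formula $(U_1,\ldots,U_n)\mapsto R[U_1,\ldots,U_n,\_]$ identifies that operation with $\Phi$.

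I do not anticipate any real obstacle, as both halves rely only on material already in place. The one point that merits care is bookkeeping which hypothesis is doing what: continuity alone (part of $X$ being a Boolean-topological algebra) suffices for condition~(1), whereas condition~(2) is exactly the assumed closure of $B$ under $\Phi$, so openness of $f$ is indeed needed both to make $\Phi$ an operation on $B$ at all and to make $R$ its extended Stone dual.
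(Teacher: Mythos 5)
Your proposal is correct and follows essentially the same route as the paper: the image of a clopen rectangle is closed by compactness/Hausdorffness and open by openness of $f$ (with the basis-of-rectangles argument supplying the converse, which the paper leaves implicit), and the two duality conditions are then verified exactly as you do, with continuity plus Hausdorffness giving closedness of $f^{-1}(x)=R[\_,\ldots,\_,x]$ and the closure of $B$ under the forward operation giving clopenness of $R[U_1,\ldots,U_n,\_]$.
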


\begin{proof}
Since all continuous maps from compact spaces to Hausdorff spaces are closed mappings, it follows that $B$ is closed under the operation $(U_1,\ldots,U_n)\mapsto f[U_1\times\ldots\times U_n]$ if and only if $f$ is an open map. The conditions required for $R$ to be the dual of this operations are $f^{-1}(x)=R[\underline{\ \ \ },x]$ closed for each $x\in X$ and $f[U_1\times\ldots\times U_n]=R[U_1,\ldots,U_n,\underline{\ }]$ clopen whenever the $U_i$s are clopen. For $f$ continuous and $X$ Hausdorff the first condition always holds. And if in addition $f$ is open, then the second also holds.
\end{proof}

In the remainder of this subsection we investigate the relationship between the conditions (1)--(4) of Proposition~\ref{prop:cont=Kripke} further. In particular we show that, for Priestley spaces in general, condition (1) is equivalent to (2) (and (3)) in the case of unary operations, but that (2) and (3) are equivalent to neither of (1) and (4) in general. 

\begin{proposition}\label{prop:unaryfunctrel}
Let $X$ be a Priestley space and $R\subseteq X\times X$ an order-compatible and functional binary relation on $X$ with $f$ the corresponding operation. Then the following conditions are equivalent.
\begin{enumerate}
	   \item The operation $f$ is continuous with respect to the Priestley topology. 
	   \item For all $x \in X$, $R[x,\underline{\ }]$ is closed in $X$ and for all clopen down-sets $V\subseteq X$ the relational 
	            image $R[\_,V^c]$ is clopen.
\end{enumerate}
Furthermore, if these conditions are satisfied, then the dual operation $V\mapsto (R[\_,V^c])^c$ is the distributive lattice homomorphism dual to $f$.
\end{proposition}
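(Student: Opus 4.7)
The direction (1)$\Rightarrow$(2) is already given by Proposition~\ref{prop:cont=Kripke} applied in the case $n=1$, so I only need to establish (2)$\Rightarrow$(1) and then identify the dual homomorphism. The plan is to exploit a simple identity that holds only because the relation is functional and the second argument set is an up-set.

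First I would observe the key identity: since $R(x,z)\iff f(x)\ge z$ and $V^c$ is an up-set in $X$,
\[
R[\_,V^c]\;=\;\{x\in X\mid \exists z\in V^c,\ f(x)\ge z\}\;=\;f^{-1}(V^c).
\]
The second equality uses that $V^c$ is upward closed, so membership of some $z\le f(x)$ in $V^c$ is equivalent to $f(x)\in V^c$ itself. (This is precisely the collapse that occurs because $R$ is functional; it would fail for a generic order-compatible relation.)

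Next I would use this identity to derive continuity. Assuming (2), the sets $f^{-1}(V^c)=R[\_,V^c]$ are clopen for every clopen down-set $V$, and taking complements also gives $f^{-1}(V)$ clopen. Since the collection $\{V,V^c\mid V\text{ a clopen down-set of }X\}$ is a subbasis for the Priestley topology (by the definition recalled in Section~\ref{PrDuality}), continuity of $f$ follows at once from the fact that preimages of subbasic open sets are open. This proves (2)$\Rightarrow$(1), and together with Proposition~\ref{prop:cont=Kripke} establishes the equivalence.

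For the final assertion, I would note that $f^{-1}$ carries clopen down-sets to clopen down-sets: it preserves the Boolean structure by being a preimage map, lands in down-sets because $f$ is order-preserving (Proposition~\ref{prop:order-comp}), and preserves clopenness by the continuity just established. By the standard Priestley recipe recalled in Section~\ref{PrDuality}, $f^{-1}\colon ClopD(X)\to ClopD(X)$ is therefore the bounded distributive lattice homomorphism dual to $f$. The identity $(R[\_,V^c])^c = (f^{-1}(V^c))^c = f^{-1}(V)$ from the first step then identifies this dual homomorphism with the map $V\mapsto(R[\_,V^c])^c$. There is no real obstacle here; the only conceptual point is recognizing that functionality of $R$ causes the relational image $R[\_,V^c]$ to coincide on the nose with the preimage $f^{-1}(V^c)$, which is what lets a purely topological (as opposed to just spectral) continuity be read off from condition (2) in the unary case.
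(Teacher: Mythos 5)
Your proposal is correct and follows essentially the same route as the paper: both hinge on the observation that for an up-set $U$ the functionality of $R$ collapses the relational image to $R[\_,U]=f^{-1}[U]$, from which continuity follows via the subbasis of clopen up-sets and down-sets, and the identification of the dual homomorphism is then the same one-line computation $(R[\_,V^c])^c=f^{-1}[V]$. No gaps.
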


\begin{proof}
We already know from Proposition~\ref{prop:cont=Kripke} that (1) implies (2). For the reverse implication, note that if $V$ is a clopen down-set then its complement is an up-set, and for up-sets $U$ we have
\[
R[\_,U]=\{x\in X\mid \exists y\in U\text{ with }f(x)\geq y\}=\{x\in X\mid f(x)\in U\}=f^{-1}[U].
\]
So condition (2) implies that the preimages of clopen up-sets are clopen. Now since $f^{-1}[U^c]=(f^{-1}[U])^c$, the preimages of clopen down-sets are also clopen, and since the clopen up-sets and clopen down-sets together form a subbasis for the Priestley topology, it follows that (2) implies (1) as required. 

Finally, we now see that the operation $V\mapsto (R[\_,V^c])^c$ is equal to the lattice homomorphism $V\mapsto f^{-1}[V]$ since, by the above computation $(R[\_,V^c])^c=(f^{-1}[V^c])^c=((f^{-1}[V])^c)^c=f^{-1}[V]$ for any clopen down-set $V$.
\end{proof}

We thus see that unary Priestley topological algebras are rather trivial, as one also expects since the dual of an order preserving continuous map under Priestley duality is a homomorphism.

\begin{corollary}\label{cor:contunary=dual of endo}
The unary Priestley topological algebras are precisely the extended Priestley dual spaces of distributive lattices with additional operations which are endomorphisms of the lattice. 
\end{corollary}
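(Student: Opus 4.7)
Both directions follow from combining Proposition~\ref{prop:unaryfunctrel} with the standard Priestley duality for homomorphisms. The plan is to set up the correspondence on objects and then verify that it matches the extended-duality machinery of Section~\ref{ExtDuality} applied to unary operations.

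For the forward direction, let $(X,\le,\pi,(f_\lambda)_{\lambda\in\Lambda})$ be a unary Priestley topological algebra, and let $D=\mathrm{ClopD}(X,\le,\pi)$ be its Priestley dual lattice. For each $\lambda$ define $R_{f_\lambda}\subseteq X\times X$ by $R_{f_\lambda}(x,z)\iff f_\lambda(x)\ge z$. By Proposition~\ref{prop:order-comp}, since $f_\lambda$ is order preserving, $R_{f_\lambda}$ is order-compatible and functional. By the implication (1)$\Rightarrow$(2) in Proposition~\ref{prop:unaryfunctrel}, $R_{f_\lambda}$ satisfies the Priestley-compatibility conditions of Definition~\ref{def:topprores}, so it is the extended Priestley dual of a unary operation $h_\lambda:D\to D$. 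Moreover, the final sentence of Proposition~\ref{prop:unaryfunctrel} identifies $h_\lambda$ as $V\mapsto f_\lambda^{-1}[V]$, which is a bounded lattice homomorphism, i.e.\ an endomorphism of $D$. Thus $(D,(h_\lambda)_\lambda)$ is a bounded distributive lattice equipped with endomorphisms, and $X$ is its extended Priestley dual.

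For the reverse direction, let $(D,(h_\lambda)_\lambda)$ be a bounded distributive lattice with a family of endomorphisms, and let $(X,\le,\pi)$ be its Priestley dual. By Priestley duality for homomorphisms (as recalled in Section~\ref{PrDuality}), each $h_\lambda$ dualises to a continuous order-preserving map $\varphi_{h_\lambda}:X\to X$ characterised by $h_\lambda^{-1}(F_x)=F_{\varphi_{h_\lambda}(x)}$, and one has $\varphi_{h_\lambda}^{-1}[\eta_D(a)]=\eta_D(h_\lambda(a))$. Equipping $X$ with the family $(\varphi_{h_\lambda})_\lambda$ makes it a unary Priestley topological algebra. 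To see that this is the extended Priestley dual structure associated with the $h_\lambda$, define $R_\lambda\subseteq X\times X$ by $R_\lambda(x,z)\iff\varphi_{h_\lambda}(x)\ge z$; this is order-compatible and functional, and by the implication (1)$\Rightarrow$(2) in Proposition~\ref{prop:unaryfunctrel} it is Priestley-compatible for the second coordinate, with induced dual operation $V\mapsto(R_\lambda[\_,V^c])^c=\varphi_{h_\lambda}^{-1}[V]=h_\lambda(V)$ on clopen down-sets. Hence the extended-dual relation of $h_\lambda$ is precisely the graph-style relation of $\varphi_{h_\lambda}$, and the two assignments are mutually inverse on objects.

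The only delicate point is the final sentence of Proposition~\ref{prop:unaryfunctrel}, which guarantees that the residual-style dual operation $V\mapsto (R[\_,V^c])^c$ coincides with the direct preimage $V\mapsto f^{-1}[V]$; this is what forces the additional operation on $D$ to be a full lattice endomorphism rather than merely a join- or meet-preserving map. The rest is bookkeeping: translating the relational description of extended duality back into the functional description of topological algebras via Definition~\ref{def:functrel} and Proposition~\ref{prop:order-comp}.
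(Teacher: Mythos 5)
Your proposal is correct and follows essentially the same route as the paper: the corollary is stated there as an immediate consequence of Proposition~\ref{prop:unaryfunctrel} (whose final sentence identifies the dual operation $V\mapsto(R[\_,V^c])^c$ with the homomorphism $f^{-1}$) together with the standard Priestley duality between lattice homomorphisms and continuous order-preserving maps, which is exactly the combination you spell out in both directions.
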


As detailed in the following example, we can also use Proposition~\ref{prop:unaryfunctrel} to show that the last condition of Proposition~\ref{prop:cont=Kripke} is not equivalent to the first three in general.

\begin{example}
Consider the bounded distributive lattice $D$ of all subsets of $\Z$ which are either finite or $\Z$ itself. 
The Priestley dual of $D$ is the poset $X=\Z\oplus \infty$ obtained by adding $\infty$ as a top to the trivially ordered anti-chain $\Z$. The topology on $X$ is the one of the one-point compactification by $\infty$ of the discrete space on $\Z$.
Now consider the map $f:X\to X$ which sends any $k\in\Z$ to $0$ and $\infty$ to $\infty$. Then $f$ is continuous in the spectral topology but not in the Priestley topology. Conditions (2) and (3) of Proposition~\ref{prop:cont=Kripke} also are not satisfied since, by Proposition~\ref{prop:unaryfunctrel}, these are equivalent to continuity in the Priestley topology in the unary case.
\end{example}

Finally, we also give an example to show that the first condition of Proposition~\ref{prop:cont=Kripke} is not equivalent to the last three in general. 

\begin{example}\label{ex:not unifcont}
Consider again the bounded distributive lattice $D$ of all subsets of $\Z$ which are either finite or $\Z$ itself. We consider the residuals of addition on $\Z$ lifted to the power set of $\Z$. Since addition is commutative, the right and left residuals agree and we need only consider one of them. Note that for $A,B\in{\mathcal P}(\Z)$, we have
\[
A/B=\bigcap\{A/k\mid k\in B\}=\bigcap\{A-k\mid k\in B\}.
\]
Further, it is clear that $D$ is closed under $(\ )-k$ as $F-k=\{n-k\mid n\in F\}$ is again finite for $F$ finite and $\Z-k=\Z$. Also, $D$ is closed under arbitrary intersections, so $D$ is closed under residuation. As in the previous example, the Priestley dual of $D$ is the poset $X=\Z\oplus \infty$ obtained by adding $\infty$ as a top to the trivially ordered anti-chain $\Z$ and the topology on $X$ is the one of the one-point compactification by $\infty$ of the discrete space on $\Z$. It is straight forward to verify that the ternary relation dual to the residuation operation on $D$ is functional with its upper edge given by addition on $X$ defined as usual; for $i,j\in\Z$:
\[
\begin{tabular}{|c|c|c|}
	\hline
	$\,+\,\tvi$ & $i$ & $\infty$  \\
	\hline
	$j\tvi$ & $\,i+j\,$ & $\,\infty\,$   \\
	\hline
	$\infty\tvi$ & $\infty$ & $\infty$ \\
	\hline
\end{tabular} 
\]
Since we are dealing with the extended dual of a residuation operation, conditions (2) and (3), and thus also (4) of Proposition~\ref{prop:cont=Kripke} must be satisfied. However, this addition operation is not continuous in the Priestley topology since, e.g., the singleton $\{0\}$ is clopen but its preimage, $\{(k,-k)\mid k\in \Z\}$, is open but not closed. 
\end{example}

We conclude that in Proposition~\ref{prop:cont=Kripke} conditions (2) and (3) are neither equivalent to condition (1) nor to condition (4) in general. 
We postpone a characterization of the duals of Priestley topological algebras to Section~\ref{subsec:resideals}.

\subsection{Residuation algebras preserving joins at primes}

In this subsection, we characterize the additional operations on lattices for which the extended Priestley dual relations are functional. In the exposition, we will mainly focus on the binary case in order to lighten the notation. The results do go through for higher arities as well though.

For a unary operation on a lattice, we saw in Proposition~\ref{prop:unaryfunctrel}, that its dual relation is functional if and only if the operation is in fact an endomorphism of the lattice. However, the situation is far from this trivial in the binary and higher arity setting. In fact, in arities greater than or equal to two, a dual relation may be functional without the original map preserving both meet and join. To see this, consider the set $X=\{0,1,-1\}$ with the ternary relation given by usual multiplication. The binary residuation operation $/$ on $B=\mathcal P(X)$ preserves meet in its first coordinate and reverses joins in the second, that is, the identities $(A_1\cap A_2)/B=(A_1/B)\cap(A_2/B)$ and $A/(B_1\cup B_2)=(A/B_1)\cap(A/B_2)$ hold in $B$. However, the operation $/$ does not preserve join in the first coordinate nor does it reverse meets in the second, e.g., $\{-1,1\}/\{-1,1\}=\{-1,1\}$ but $\{-1\}/\{-1,1\}=\emptyset=\{1\}/\{-1,1\}$, and $\{1\}/\emptyset=X$ is strictly larger than the union of $\{1\}/\{1\}=\{1\}$ and $\{1\}/\{-1\}=\{-1\}$.

 As we have seen in the previous subsection, the appropriate operations on lattices dual to functional relations are residuation operations. We make the following definition.

\begin{definition}
A \emph{(binary) residuation algebra} is a bounded distributive lattice, $D$, equipped with two binary operations $\bs, /:D\times D\to D$ with the following properties:
\begin{enumerate} 
\item $\forall a,b_1,b_2\in D\qquad\qquad a\bs (b_1\wedge b_2)=(a\bs b_1)\wedge (a\bs b_2).$
\item $\forall a_1,a_2,b\in D\qquad\qquad (a_1\wedge a_2)/b=(a_1/b)\wedge (a_2/b).$
\item The two operations $\bs$ and $/$ are linked by the Galois property:
\[
\hspace*{-3cm}\forall a,b,c\in D\qquad\qquad b\leq a\bs c \iff a\leq c/b.
\]
\end{enumerate}
Note that, under the assumption of (3) conditions (1) and (2) are equivalent so that one holds if and only if the other does.  The Galois property (3) also implies that the following two properties hold:
\begin{enumerate} 
\item[(4)] $\forall a_1,a_2,b\in D\qquad\qquad (a_1\vee a_2)\bs b=(a_1\bs b)\wedge (a_2\bs b).$
\item[(5)] $\forall a,b_1,b_2\in D\qquad\qquad a/(b_1\vee b_2)=(a/b_1)\wedge (a/b_2).$
\end{enumerate}
In general, for an operational type $\tau$, by \emph{residuation algebra of type $\tau$}, we mean a bounded distributive lattice with operations corresponding to the $n$ residuals of an $n$-ary operation for each $n$-ary function symbol in $\tau$. We call such an algebra a \emph{Boolean residuation algebra} provided the underlying lattice is Boolean.
\end{definition}

For simplicity of notation, we will mainly deal with the binary case. Given a binary residuation algebra we have a ternary dual relation $R$ on the dual $X$ of $D$ which encodes both the operations and which is given by
\begin{align*}
R(x,y,z) \iff F_x\bs I_z\leq I_y
                         \iff I_z/ F_y\leq I_x.
\end{align*}
As explained in Section~\ref{ExtDuality}, this relation will be Priestley compatible with respect to the first two coordinates. Even though a residuation algebra $D$ does not in general have an operation $\cdot$ for which $\bs$ and $/$ are the residuals, we do obtain a product operation on the filter lattice of $D$.

\begin{proposition}
Let $D$ be a binary residuation algebra and $(X,\leq,\pi,R)$ its extended Priestley dual. Then we have an operation $\cdot:Filt(D)\times Filt(D)\to Filt(D)$ given by 
\begin{align*}
F\cdot G=\{c\mid\exists a\in F\mbox{ with }a\bs c\in G\}.
\end{align*}
Further, it is related to the liftings of $\,\bs$ and $/$ and to $R$ by the following 
multi-sorted residuation property
\begin{align*}\label{Rdef}
\hspace*{-.1cm}\forall x,y,z\in X \qquad R(x,y,z) \iff F_x\cdot F_y\geq F_z
                         \iff F_x\bs I_z\leq I_y
                         \iff I_z/ F_y\leq I_x.
\end{align*}
In general, given a residuation algebra of type $\tau$, we obtain, for each $n$-ary operation symbol, an $n$-ary operation on the filter lattice.
\end{proposition}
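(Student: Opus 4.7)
The plan has two parts: first verify that $F\cdot G$ is in fact a filter of $D$, and then unpack the chain of equivalences by passing between prime filters and their complementary prime ideals.

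For the filter verification, upward closure of $F\cdot G$ follows from monotonicity of $a\bs(\underline{\ })$, which is immediate from (1): if $a\bs c\in G$ and $c\leq d$, then $a\bs c\leq a\bs d\in G$. For closure under binary meets, given witnesses $a_1,a_2\in F$ for $c_1,c_2\in F\cdot G$, I would take $a=a_1\wedge a_2\in F$; property (4) shows that $\bs$ is anti-monotone in its first argument, so $a\bs c_i\geq a_i\bs c_i\in G$, and then (1) gives $a\bs(c_1\wedge c_2)=(a\bs c_1)\wedge(a\bs c_2)\in G$, hence $c_1\wedge c_2\in F\cdot G$. That $1\in F\cdot G$ follows from the empty-meet instance of (1), which forces $a\bs 1=1$ for any $a\in F$.

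For the equivalences, I would first recall that the filter order is reverse inclusion, so $F_x\cdot F_y\geq F_z$ means $F_x\cdot F_y\subseteq F_z$, which unfolds to: for every $c$, if there exists $a\in F_x$ with $a\bs c\in F_y$, then $c\in F_z$. Primeness of $F_y$ and $F_z$ lets me replace nonmembership by membership in the complementary prime ideals, so the contrapositive reads: for every $a\in F_x$ and every $c\in I_z$, $a\bs c\in I_y$. Since $I_y$ is a down-set closed under finite joins and $F_x\bs I_z$ is by definition the ideal generated by $\{a\bs c\mid a\in F_x,\ c\in I_z\}$, this is exactly $F_x\bs I_z\subseteq I_y$, i.e.\ $F_x\bs I_z\leq I_y$ in the ideal order. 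The equivalence with $R(x,y,z)$ and with $I_z/F_y\leq I_x$ was already recorded at the end of Section~\ref{ExtDuality} as a consequence of the Galois property (3), and chaining these gives the four-fold equivalence.

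For the $n$-ary generalization, one defines a filter product for each $n$-ary function symbol by the analogous witness clause (existential quantification in the first $n-1$ coordinates, using iterated residuals, landing in the last filter), and the filter axioms are verified using the $n$-ary analogues of (1) and (4) together with meet-closure of $F_1,\dots,F_{n-1}$ to amalgamate witnesses. The main pitfall I anticipate is not mathematical but bookkeeping: keeping straight that filter order is reverse inclusion while ideal order is inclusion, and noting carefully that the passage between the filter-theoretic and ideal-theoretic formulations uses primeness of both $F_y$ and $F_z$ (respectively $F_x$ and $F_z$ for the $/$ equivalence).
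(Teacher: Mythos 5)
Your proposal is correct and follows essentially the same route as the paper's proof: the filter axioms are checked by amalgamating witnesses via $a_1\wedge a_2\in F$ using anti-monotonicity of $\bs$ in its first argument together with identity (1), and the four-fold equivalence is reduced to showing $F_x\cdot F_y\geq F_z\iff F_x\bs I_z\leq I_y$ by passing to the complementary prime ideals, the remaining equivalences being quoted from Section~\ref{ExtDuality}. Your explicit bookkeeping of where primeness and the reverse filter order enter is a welcome clarification of points the paper leaves implicit.
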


\begin{proof} We just prove the proposition in the binary case. Let $F,G\in Filt(D)$ and $H=\{c\mid\exists a\in F\mbox{ with }a\bs c\in G\}$. Since $\bs$ is order preserving in its second coordinate and $G$ is an up-set, it is clear that $H$ is an up-set. Also as $1\bs 1=1$, $H$ is non-empty. We show that $H$ is closed under binary meet. To this end, let $a_i\in F$ with $a_i\bs c_i\in G$ for $i=1$ and $2$. Then $(a_1\wedge a_2)\bs c_i\geq a_i\bs c_i$ and thus $(a_1\wedge a_2)\bs c_i\in G$ for $i=1$ and $2$. It follows that
\[
[(a_1\wedge a_2)\bs c_1]\wedge[(a_1\wedge a_2)\bs c_2]=(a_1\wedge a_2)\bs (c_1\wedge c_2)\in G.
\] 
Now since $F$ is a filter $a_1\wedge a_2\in F$ and thus $c_1\wedge c_2\in H$ and $F\cdot G$ as given is indeed a filter. For the second assertion, let $x,y,z\in X$. Note that we just need to show that $F_x\cdot F_y\geq F_z$ is equivalent to the three other conditions as they are already known to be equivalent by the basic extended duality results. To this end, suppose $F_x\bs I_z\leq I_y$ and $c$ is such that there exists $a\in F_x$ with $a\bs c\in F_y$. It clearly follows that $c\not\in I_z$ and thus $c\in F_z$ as required. Conversely, suppose $F_x\cdot F_y\geq F_z$, that is, $\{c\mid\exists a\in F_x\mbox{ with }a\bs c\in F_y\}\subseteq F_z$ and let $a\in F_x$ and $c\in I_z$. Then $c\not\in F_z$ and thus $a\bs c\not\in F_y$. That is, $a\bs c\in I_y$ as required. 
\end{proof}

We have just shown the existence of a binary operation on the lattice of all filters of a residuation algebra $(D, \bs,/)$ for which $\bs$ and $/$ are in some sense the residuals. The existence of this operation and its relation to  $\bs$ and $/$ is much simpler to understand from the point of view of canonical extensions. There, one may show that the so-called $\pi$-extensions of $\bs$ and $/$ have a lower adjoint on the canonical extension of $D$. The operation given here on filters is then the restriction of this operation to the filter elements of the canonical extension. For more details, see \cite[Lemma~2.22]{GehrkeJonsson04} and the approach in \cite{Gehrke09}.

\begin{proposition}\label{prop:functdual}
Let $D$ be a binary residuation algebra and $(X,\leq,\pi,R)$ its extended Priestley dual. The following conditions are equivalent:
\begin{enumerate}
\item The operation $\cdot:Filt(D)\times Filt(D)\to Filt(D)$ sends prime filters to prime filters.
\item $\forall a,b,c\in D\ \forall x\in X\ \left(a\in F_x\ \implies\ 
\exists a'\in F_x\ [a\bs(b\vee c)\leq(a'\bs b)\vee(a'\bs c)] \right)$.
\item $\forall x\in X$ the map $F_x\bs(\underline{\ }):Idl(D)\to Idl(D)$ is $\vee$-preserving.
\end{enumerate}
A corresponding result holds for residuation algebras of arbitrary type.
\end{proposition}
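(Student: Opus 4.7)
The plan is to close the loop $(2)\Rightarrow(1)\Rightarrow(2)$ and separately to prove $(2)\Leftrightarrow(3)$, relying throughout on closure of filters under finite meets (to consolidate witnesses) and on antitonicity of $\bs$ in its first coordinate. The Prime Ideal Theorem is needed in exactly one step. For $(2)\Rightarrow(1)$, I would take primes $F_x,F_y$ and $c_1\vee c_2\in F_x\cdot F_y$; by definition of $\cdot$ some $a\in F_x$ gives $a\bs(c_1\vee c_2)\in F_y$, then (2) produces $a'\in F_x$ with $a\bs(c_1\vee c_2)\leq(a'\bs c_1)\vee(a'\bs c_2)$, and primality of $F_y$ places $a'\bs c_i$ in $F_y$ for some $i$, so the corresponding $c_i$ lies in $F_x\cdot F_y$.

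For the harder direction $(1)\Rightarrow(2)$, I would fix $x\in X$, $a\in F_x$, and $b,c\in D$ and build the candidate ideal
\[
I=\{d\in D\mid\exists a'\in F_x,\ d\leq(a'\bs b)\vee(a'\bs c)\}.
\]
Closure of $I$ under finite joins follows by consolidating witnesses: if $d_i\leq(a'_i\bs b)\vee(a'_i\bs c)$ for $i=1,2$, then $a'=a'_1\wedge a'_2\in F_x$ satisfies $a'_i\bs x\leq a'\bs x$ for $x\in\{b,c\}$, so $d_1\vee d_2\in I$. Now assume (2) fails for this data, so $a\bs(b\vee c)\notin I$; in particular $I$ is proper, since $1\in I$ would give some $a'\in F_x$ with $(a'\bs b)\vee(a'\bs c)=1$, forcing $a\bs(b\vee c)\leq 1=(a'\bs b)\vee(a'\bs c)$ and hence (2). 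The Prime Ideal Theorem then yields a prime ideal $I_y\supseteq I$ avoiding $a\bs(b\vee c)$; its complement $F_y$ is a prime filter with $b\vee c\in F_x\cdot F_y$. By (1), one of $b,c$, say $b$, lies in $F_x\cdot F_y$, so some $a''\in F_x$ has $a''\bs b\in F_y$, contradicting $a''\bs b\in I\subseteq I_y$.

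For $(2)\Leftrightarrow(3)$, consolidating witnesses (meet in the filter, join in the ideal) rewrites the relevant ideals as
\[
F_x\bs(I_1\vee I_2)=\{d\mid\exists a\in F_x,\,b\in I_1,\,c\in I_2,\ d\leq a\bs(b\vee c)\}
\]
and
\[
(F_x\bs I_1)\vee(F_x\bs I_2)=\{d\mid\exists a'\in F_x,\,b\in I_1,\,c\in I_2,\ d\leq(a'\bs b)\vee(a'\bs c)\}.
\]
The inclusion $(F_x\bs I_1)\vee(F_x\bs I_2)\subseteq F_x\bs(I_1\vee I_2)$ is immediate from monotonicity of $\bs$ in its second coordinate, and the reverse inclusion is precisely the content of (2). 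Conversely, specializing (3) to the principal ideals $I_1=(b]$, $I_2=(c]$ and using monotonicity in the second coordinate to enlarge the witnesses back up to $b,c$ recovers (2). The arbitrary-arity statement follows the same pattern, distributing the join across the chosen output coordinate. The main obstacle is the $(1)\Rightarrow(2)$ step: identifying the ideal $I$ so that the Prime Ideal Theorem manufactures a prime filter $F_y$ which simultaneously witnesses the failure of (2) and the failure of primality of $F_x\cdot F_y$. Once the right $I$ is on the table, the remainder is formal manipulation with the filter and ideal closures.
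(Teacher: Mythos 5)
Your proposal is correct, but it takes a genuinely different route from the paper for the key implication. The paper closes the cycle as $(1)\Rightarrow(2)\Rightarrow(3)\Rightarrow(1)$ and proves $(1)\Rightarrow(2)$ topologically: it covers the clopen set $\eta_D(a\bs(b\vee c))$ by the sets $\eta_D(a'\bs b)\cup\eta_D(a'\bs c)$ for $a'\in F_x$ and extracts a finite subcover by compactness of the dual space, consolidating the finitely many witnesses by a meet. You instead prove $(2)\Rightarrow(1)$ directly (which is the easy splitting argument) and obtain $(1)\Rightarrow(2)$ by a purely order-theoretic argument: you build the ideal $I=\{d\mid\exists a'\in F_x,\ d\leq(a'\bs b)\vee(a'\bs c)\}$, observe that failure of $(2)$ means $a\bs(b\vee c)\notin I$, and invoke the Prime Ideal Theorem to separate; the resulting prime filter $F_y$ then contradicts primality of $F_x\cdot F_y$. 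This is a clean substitution of the separation form of the Prime Ideal Theorem for the compactness of the Stone space (the two being equivalent in strength), and it has the merit of never leaving the lattice side; the witness-consolidation step via $a'=a_1'\wedge a_2'$ and antitonicity of $\bs$ in its first coordinate, which you use to verify that $I$ is an ideal, is exactly the same device the paper uses after its finite subcover. Your explicit descriptions of $F_x\bs(I_1\vee I_2)$ and $(F_x\bs I_1)\vee(F_x\bs I_2)$ for $(2)\Leftrightarrow(3)$ are also correct and slightly more symmetric than the paper's, which only needs $(2)\Rightarrow(3)$ and gets $(3)\Rightarrow(2)$ around the cycle. One shared caveat: like the paper's proof of $(3)\Rightarrow(1)$, your $(2)\Rightarrow(1)$ verifies only the join-splitting property of $F_x\cdot F_y$ and not its properness; this is a convention issue in the statement rather than a defect specific to your argument.
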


\begin{proof}
(1)$\Rightarrow$(2): Let $a,b,c\in D$ and $x\in X$ with $a\in F_x$. Further let $y\in X$ with $a\bs(b\vee c)\in F_y$. Then $b\vee c\in F_x\cdot F_y$ and by (1) it follows that $b\in F_x\cdot F_y$ or $c\in F_x\cdot F_y$. Thus there is $a'\in F_x$ with $a'\bs b\in F_y$ or $a'\bs c\in F_y$. In either case, it follows that $(a'\bs b)\vee(a'\bs c)\in F_y$. Thus we have
\[
\eta_D(a\bs[b\vee c])\subseteq\bigcup_{a'\in F_x}\eta_D(a'\bs b)\cup\eta_D(a'\bs c)
\]
where $\eta_D:D\to\mathcal P(X)$ is the embedding given by Priestley duality. Since $\eta_D(a\bs[b\vee c])$ is compact there are $a_1',\ldots,a_n'\in F_x$ with 
\begin{align*}
\eta_D(a\bs[b\vee c])&\subseteq\eta_D(a_1'\bs b)\cup\eta_D(a_1'\bs c)\cup\ldots\cup\eta_D(a_n'\bs b)
\cup\eta_D(a_n'\bs c)\\
                           &=\eta_D([a''\bs b]\vee[a''\bs c])
\end{align*}
where $a''=a_1'\wedge\ldots\wedge a_n'$. That is, $a\bs[b\vee c]\leq[a''\bs b]\vee[a''\bs c]$ and $a''\in F_x$ as required.

(2)$\Rightarrow$(3): Let $x\in X$ and let $I$ and $J$ be ideals of $D$. It is clear that $F_x\bs I\vee F_x\bs J\subseteq F_x\bs(I\vee J)$ since the operation $F_x\bs(\underline{\ })$ is order preserving. Now let $a\in F_x$ and $d\leq b\vee c$ where $b\in I$ and $c\in J$. Then $a\bs d\leq a\bs(b\vee c)$ and by (2) there is $a'\in F_x$ with $a\bs(b\vee c)\leq(a'\bs b)\vee(a'\bs c)$. Since $a'\bs b\in F_x\bs I$ and $a'\bs c\in F_x\bs J$ it follows that $a\bs d\in F_x\bs I\vee F_x\bs J$ and thus the operation $F_x\bs(\underline{\ })$ is join preserving.

(3)$\Rightarrow$(1): Let $x,y\in X$. We want to show that $F_x\cdot F_y$ is prime. Let $b,c\in D$ and suppose $b\not\in F_x\cdot F_y$ and $c\not\in F_x\cdot F_y$. Note that $b\not\in F_x\cdot F_y$ implies that for all $a\in F_x$ we have $a\bs b\not\in F_y$, that is, $a\bs b\in I_y$ and thus, as $\bs$ is order preserving in the second coordinate, $F_x\bs{\downarrow}b\subseteq I_y$. Similarly, $F_x\bs{\downarrow}c\subseteq I_y$. Now, since $F_x\bs(\underline{\ })$ is join preserving, we have 
$F_x\bs{\downarrow}(b\vee c)=F_x\bs({\downarrow}b\vee{\downarrow}c)=(F_x\bs{\downarrow}b)\vee(F_x\bs{\downarrow}c)$ and thus $F_x\bs{\downarrow}(b\vee c)\subseteq I_y$. That is, $b\vee c\not\in F_x\cdot F_y$ and, by contraposition, we have proved that $F_x\cdot F_y$ is prime.
\end{proof}

Note that the first condition is equivalent to $\bs$ and $/$ sending primes (filter-ideal pairs and ideal-filter pairs, respectively) to prime ideals.

\begin{definition}
Let $D$ be a binary residuation algebra. If the equivalent conditions of Proposition~\ref{prop:functdual} hold, then we say that \emph{residuation is join preserving at primes}. In this case we denote by $\cdot$ also the function from $X\times X$ to $X$ such that $F_{x\cdot y}=F_x\cdot F_y$.

For a residuation algebra of type $\tau$, if the operation on filters sends tuples of primes to primes for each basic operation symbol of the type then the dual space becomes a $\tau$ algebra in these operations restricted to primes and we say that residuation is join preserving at primes. 
\end{definition}

Note that if residuation in a residuation algebra $D$ is join preserving at primes, then the relation $R$ dual to a binary operation symbol $\cdot$ is given by $R(x,y,z) \Leftrightarrow x\cdot y\geq z$. In the Boolean case this means that $R$ is the graph of the operation $\cdot$ on $X$ and in the distributive lattice case that $\cdot$ is the `upper-edge' of $R$. In either case, this is precisely the meaning of $R$ being functional as in Definition~\ref{def:functrel}. Combining Proposition~\ref{prop:cont=Kripke} and Proposition~\ref{prop:functdual}, we now obtain the following result.

\begin{theorem}\label{thrm:CharBootopalg}
Let $\tau$ be an operational type. Boolean-topological algebras of type $\tau$ are, up to isomorphism, 
precisely the extended Stone duals of Boolean residuation algebras of type $\tau$ for which residuation 
is join preserving at primes.\\
\end{theorem}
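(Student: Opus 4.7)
The plan is to assemble the theorem from two results already in place: Corollary~\ref{cor:charBootopalg}, which identifies Boolean-topological algebras with those extended Stone duals whose dual relations are functional, and Proposition~\ref{prop:functdual}, which identifies the lattice-theoretic condition on a residuation algebra that forces the filter-product to send tuples of primes to primes (equivalently, forces the dual relation $R$ to be functional). The bridge between these two statements is the observation made just before Theorem~\ref{thrm:CharBootopalg}: whenever residuation is join preserving at primes, one has $F_x\cdot F_y = F_{x\cdot y}$, and hence the relation $R(x,y,z) \iff x\cdot y \geq z$ on the dual Boolean space is exactly the graph of an $n$-ary operation, i.e., it is functional in the sense of Definition~\ref{def:functrel}. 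For simplicity I will present the argument for a single binary operation symbol; the extension to an arbitrary type $\tau$ is componentwise and routine.

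For the forward direction, let $X$ be a Boolean-topological algebra of type $\tau$ with continuous operation $f\colon X^n\to X$. By Corollary~\ref{cor:charBootopalg}, $X$ is the extended Stone dual of a Boolean algebra $B$ equipped with residuation operations $\bs,/$ (the two residuals of the operation $(U_1,\ldots,U_n)\mapsto R[U_1,\ldots,U_n,\underline{\ }]$ on clopens), and the ternary dual relation $R$ is functional with $R(\overline x, z)\iff f(\overline x)\geq z$. It remains to verify that this Boolean residuation algebra has residuation join preserving at primes. Functionality of $R$ means precisely that for every pair of prime filters $F_x, F_y$ the filter $F_x\cdot F_y$ equals $F_{f(x,y)}$, which is again prime. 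This is exactly condition (1) of Proposition~\ref{prop:functdual}, so residuation in $B$ is join preserving at primes, as required.

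For the reverse direction, start with a Boolean residuation algebra $B$ of type $\tau$ for which residuation is join preserving at primes, and let $(X,R)$ be its extended Stone dual. By Proposition~\ref{prop:functdual}, for every pair of primes $F_x, F_y$ the product $F_x\cdot F_y$ is again prime, so we may define $f\colon X^n\to X$ by $F_{f(\overline x)} = F_{x_1}\cdot\ldots\cdot F_{x_n}$. Using the equivalent reformulations of $R$ in terms of the filter-product, the graph of $f$ coincides with $R$; so $R$ is functional. Since $R$ is by construction the extended Stone dual of the residuation operations on $B$ (the Priestley-compatibility conditions of Definition~\ref{def:topprores} hold for each output coordinate $i$ because the residuals are honest operations on $B$), Corollary~\ref{cor:cont=Kripke} applies and gives that $f$ is continuous. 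Therefore $X$ is a Boolean-topological algebra of type $\tau$, and by the Stone duality translations back and forth, $B$ is recovered as its extended dual up to isomorphism.

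The only delicate point is checking, in the reverse direction, that the functional relation $R$ obtained from a residuation algebra with residuation join preserving at primes really does satisfy the hypothesis of Corollary~\ref{cor:cont=Kripke} — namely, that it is the extended Stone dual, with respect to some output coordinate, of the residuation operation on the Boolean algebra. This is immediate from the defining equivalences $R(x,y,z)\iff F_x\bs I_z\leq I_y \iff I_z/F_y\leq I_x$ given in Section~\ref{ExtDuality}, which already encode $R$ as the dual of both $\bs$ and $/$. So no new work is needed beyond invoking the two earlier results and matching up their hypotheses.
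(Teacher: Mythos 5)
Your proposal is correct and follows essentially the same route as the paper: the paper derives Theorem~\ref{thrm:CharBootopalg} precisely by combining Proposition~\ref{prop:cont=Kripke} (via its Boolean corollary) with Proposition~\ref{prop:functdual}, using the same bridging observation that join preservation at primes is equivalent to functionality of the dual relation $R$. Your extra check in the last paragraph, that $R$ is indeed the extended Stone dual of the residuals via the defining equivalences of Section~\ref{ExtDuality}, is exactly the point the paper leaves implicit.
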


We do not know whether the property of preserving joins at primes is equivalent to a first-order property of residuation algebras.

\subsection{Duals of Priestley topological algebra morphisms}
\label{subsec:topalgmorphisms}

We have seen that Boolean and Priestley topological algebras are special extended dual spaces. However, the appropriate maps for extended dual spaces are the bounded morphisms as described at the end of Section~\ref{ExtDuality} and the appropriate maps for topological algebras are continuous homomorphisms. We start by observing that, in the Boolean case, a continuous map between Boolean-topological algebras is a homomorphism for a basic operation $f$ if and only if it satisfies the (Forth) condition, as given at the end of Section~\ref{sec:duality}, for any (and then all) of the residuated family of operations associated with the graph $R$ of $f$. We spell out the ensuing result for a binary operation in the following proposition. A corresponding result holds in any arity.
\begin{proposition}\label{prop:dualBootopalgmorphism}
Let $\varphi:X\to Y$ be a continuous map between Boolean-topological algebras of the same type $\tau$ and $h:C\to B$ the Boolean algebra homomorphism dual to $\varphi$. Let $\cdot$ be a basic binary operation symbol for the type $\tau$ and $\bs,/$ the residuation operations dual to $\cdot$. Then the following conditions are equivalent:
\begin{enumerate}
\item For all $x_1,x_2\in X$ we have $\varphi(x_1\cdot x_2)=\varphi(x_1)\cdot\varphi(x_2)$.
\item For all $c_1,c_2\in C$ we have $h(c_1\bs c_2)\leq h(c_1)\bs h(c_2)$.
\item For all $c_1,c_2\in C$ we have $h(c_1/c_2)\leq h(c_1)/h(c_2)$.
\end{enumerate}
\end{proposition}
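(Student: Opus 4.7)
The plan is to unravel the formulas for $\bs$ and $/$ in terms of the dual ternary relation (which is here the graph of the basic operation $\cdot$) and then pass back and forth using the combination of Hausdorffness of $Y$ and the continuity of $\cdot_Y$.

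First I would prove (1)$\Rightarrow$(2). Since $Y$ is a Boolean-topological algebra, Corollary~\ref{cor:charBootopalg} tells us that the graph $S$ of $\cdot_Y$ is the extended Stone dual of the residuation $\bs$ on $C$, so for clopen $c_1,c_2\in C$,
\[
c_1\bs c_2=\{y_2\in Y\mid \forall y_1\in c_1,\ y_1\cdot y_2\in c_2\}.
\]
Applying $h=\varphi^{-1}$ and then using (1) to replace $\varphi(x_1)\cdot\varphi(x_2)$ by $\varphi(x_1\cdot x_2)$ gives immediately
\[
h(c_1\bs c_2)=\{x_2\mid\forall y_1\in c_1,\ y_1\cdot\varphi(x_2)\in c_2\}\ \subseteq\ \{x_2\mid\forall x_1\in h(c_1),\ \varphi(x_1\cdot x_2)\in c_2\}=h(c_1)\bs h(c_2),
\]
the inclusion being an inequality because the right-hand quantification is restricted to $\varphi^{-1}(c_1)\subseteq c_1$ rather than $c_1$ itself.

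For (2)$\Rightarrow$(1) I would argue by contraposition. Assume $\varphi(x_1\cdot x_2)\neq \varphi(x_1)\cdot\varphi(x_2)$. Since $Y$ is a Boolean space we may pick a clopen $c_2\subseteq Y$ separating these two points, and replacing $c_2$ by its complement if necessary we may arrange $\varphi(x_1\cdot x_2)\in c_2$ and $\varphi(x_1)\cdot\varphi(x_2)\notin c_2$. By continuity of $\cdot_Y$ at $(\varphi(x_1),\varphi(x_2))$ together with the existence of a clopen basis, there are clopen neighbourhoods $c_1\ni\varphi(x_1)$ and $V\ni\varphi(x_2)$ with $c_1\cdot V\subseteq c_2^c$. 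Unpacking the definition of $\bs$ on $C$ gives $V\subseteq c_1\bs c_2^c$, hence $x_2\in h(c_1\bs c_2^c)$. Applying (2) to the pair $(c_1,c_2^c)$ then places $x_2$ in $h(c_1)\bs h(c_2^c)$, and since $x_1\in h(c_1)$ this forces $x_1\cdot x_2\in h(c_2^c)$, i.e.\ $\varphi(x_1\cdot x_2)\notin c_2$, contradicting our choice of $c_2$. The equivalence (1)$\Leftrightarrow$(3) follows by the exact same argument with the roles of the two arguments of $\cdot$ interchanged, using the symmetric description $c_1/c_2=\{y_1\mid \forall y_2\in c_2,\ y_1\cdot y_2\in c_1\}$.

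The only delicate step is the implication (2)$\Rightarrow$(1): one needs to convert a single pointwise failure of the homomorphism property into a clopen rectangular witness that can be fed into the residuation inequality, and this is precisely where both Hausdorffness (to separate the two images) and continuity of $\cdot_Y$ (to pull the separation back to a product of clopens) are simultaneously used. Everything else is a direct translation through the extended Stone duality formula for $\bs$ and $/$.
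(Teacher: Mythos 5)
Your proof is correct. The paper's own proof is much terser: it observes that condition (1) is exactly the (Forth) condition for the graph relations $R$ and $S$ of $\cdot_X$ and $\cdot_Y$, and then cites the literature (Goldblatt's Theorem~2.3.1(1)) for the standard fact that, for a dual operator, (Forth) is equivalent to the lax-preservation inequalities in (2) and (3). You instead prove that cited equivalence from scratch. The direction (1)$\Rightarrow$(2) is the routine unwinding of the formula $c_1\bs c_2=\{y_2\mid\forall y_1\in c_1,\ y_1\cdot y_2\in c_2\}$, which is legitimate here by Corollary~\ref{cor:cont=Kripke}/Corollary~\ref{cor:charBootopalg}; for (2)$\Rightarrow$(1) you replace the generic relational (prime-filter) argument by a concrete topological one, separating $\varphi(x_1\cdot x_2)$ from $\varphi(x_1)\cdot\varphi(x_2)$ by a clopen and pulling the separation back through the continuity of $\cdot_Y$ to a clopen box $c_1\times V$. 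What this buys is a self-contained argument exploiting the functionality and continuity available in this special case; what it gives up is the reusable general principle ((Forth) $\Leftrightarrow$ lax preservation of residuals, valid for arbitrary dual relations) that the paper leans on. One cosmetic slip: the justification ``$\varphi^{-1}(c_1)\subseteq c_1$'' is type-incorrect, since $\varphi^{-1}(c_1)$ lives in $X$ while $c_1$ lives in $Y$; what you mean, and what your displayed inclusion actually uses, is $\varphi(\varphi^{-1}(c_1))\subseteq c_1$.
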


\begin{proof}
Condition (1) is clearly equivalent to the condition
\begin{itemize}
\item[]\hspace*{-1.25cm}(Forth)\qquad\qquad $(R(x_1,x_2,x_3)\ \Rightarrow\  S(\varphi(x_1),\varphi(x_2),\varphi(x_3))\,)$
\end{itemize}
where $R$ is the graph of the operation denoted by $f$ in $X$ and $S$ is the graph of the operation denoted by $f$ in $Y$.
One may easily verify, see e.g. the proof of \cite[Theorem~2.3.1(1)]{Goldblatt89} or the last part of \cite[Section~5]{Gehrke13}, that the (Forth) condition corresponds, for a dual operator, to the inclusions given in (2) and (3).
\end{proof}

In the distributive setting, the (Forth) condition only tells us that  $\varphi(x_1\cdot x_2)\leq \varphi(x_1)\cdot\varphi(x_2)$, and, even in the Boolean case, the above proposition does not give the complete picture. Of central importance to the work on recognition is to know which Boolean subalgebras of the dual of a Boolean-topological algebra are dual to algebraic quotients and the above result does not help us in answering this question. The following result for the distributive setting is much more useful in this regard.

\begin{theorem}\label{thrm:dualtopalgmorphism}
Let $\varphi:X\to Y$ be a continuous and order preserving map between Priestley topological algebras of the same type $\tau$ and $h:E\to D$ the distributive lattice homomorphism dual to $\varphi$. Let $\cdot$ be a basic binary operation symbol for the type $\tau$ and $\bs,/$ the residuation operations dual to $\cdot$. Then the following conditions are equivalent:
\begin{enumerate}
\item For all $x_1,x_2\in X$ we have $\varphi(x_1\cdot x_2)=\varphi(x_1)\cdot\varphi(x_2)$.
\item For all $a\in D, e\in E$ there is $e'\in E$ with $a\leq h(e')$ and $a\bs h(e)=h(e'\bs e)$.
\item For all $a\in D,e\in E$ there is $e'\in E$ with $a\leq h(e')$ and $h(e)/a=h(e/e')$.
\end{enumerate}
\end{theorem}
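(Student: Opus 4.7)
The plan is to prove $(1) \Rightarrow (2)$ by a compactness argument, $(2) \Rightarrow (1)$ by extracting the two pointwise inequalities that together give the homomorphism identity, and to observe that $(1) \Leftrightarrow (3)$ is proved symmetrically with $/$ in place of $\bs$.

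For $(1) \Rightarrow (2)$, fix $a \in D$ and $e \in E$, and form the filter $F = \{e'' \in E : \varphi[a] \subseteq e''\}$ of $E$. Using (1) one first verifies that $h(e'' \bs e) \subseteq a \bs h(e)$ for every $e'' \in F$: for $y' = \varphi(x') \in \varphi[a] \subseteq e''$, the condition $y' \cdot \varphi(x) \in e$ becomes $\varphi(x' \cdot x) \in e$ by (1). Conversely, for each $x \in a \bs h(e)$, the set $M_x := \{y' \in Y : y' \cdot \varphi(x) \in e\}$ is a clopen down-set of $Y$ (by continuity and order-preservation of $\cdot_Y$, and clopen-downness of $e$); condition (1) gives $\varphi[a] \subseteq M_x$, so $M_x \in F$, and $x \in h(M_x \bs e)$ by construction. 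Thus $\{h(e'' \bs e) : e'' \in F\}$ is an open cover by clopens of the compact set $a \bs h(e)$; a finite subcover exists, and meeting the corresponding $e''$'s in $F$ yields an $e' \in F$ with $h(e' \bs e) = a \bs h(e)$.

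For $(2) \Rightarrow (1)$, fix $x_1, x_2 \in X$; by total order-disconnectedness of $Y$ it suffices to show that for every $e \in E$, $\varphi(x_1 \cdot x_2) \in e$ iff $\varphi(x_1) \cdot \varphi(x_2) \in e$. For the $\Rightarrow$ direction, take $a := \{x' \in X : x' \cdot x_2 \in h(e)\}$, a clopen down-set of $X$ containing $x_1$. By construction $x_2 \in a \bs h(e)$, so by (2) there is $e' \supseteq \varphi[a]$ with $x_2 \in h(e' \bs e)$; since $\varphi(x_1) \in e'$ this forces $\varphi(x_1) \cdot \varphi(x_2) \in e$. The $\Leftarrow$ direction is handled by a dually chosen $a$ together with the reverse inclusion in the equality $a \bs h(e) = h(e' \bs e)$. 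The equivalence $(1) \Leftrightarrow (3)$ then follows from the entirely analogous argument with $/$ in place of $\bs$.

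The main obstacle will be the $\Leftarrow$ direction of $(2) \Rightarrow (1)$: applying (2) to $a := \varphi^{-1}(\{y' \in Y : y' \cdot \varphi(x_2) \in e\})$ yields only the existence of a point $y'_0 \in e' \setminus \varphi[a]$ with $y'_0 \cdot \varphi(x_2) \notin e$, so completing the argument requires either a sharper choice of $a$ that forces the existential $e'$ to be tight, or a more careful exploitation of the equality $a \bs h(e) = h(e' \bs e)$ via both inclusions simultaneously.
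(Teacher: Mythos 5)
Your argument for (1)$\Rightarrow$(2) is correct and is in substance the paper's own: the paper covers the compact set $\eta_D(a\bs h(e))$ by the directed family $\{\eta_D(h(e''\bs e))\mid a\le h(e'')\}$, obtained from the identity ${\downarrow}\varphi(\eta_D(a))=\bigcap\{\eta_E(e'')\mid a\le h(e'')\}$, and extracts a single $e'$ by compactness — exactly what your cover by the sets $M_x$ accomplishes, perhaps a little more directly. The forward half of (2)$\Rightarrow$(1) is also fine. The genuine gap is the one you flag yourself: the inequality $\varphi(x_1\cdot x_2)\le\varphi(x_1)\cdot\varphi(x_2)$ does not follow from your pointwise manipulation of clopen down-sets of $Y$, because condition (2) gives no control over the part of the witness $e'$ lying outside the down-set of $\varphi[X]$, which is precisely where a point $y'_0$ with $y'_0\cdot\varphi(x_2)\notin e$ could sit. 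As written, the proposal establishes only one of the two inequalities that make up (1).

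The missing idea is to stop quantifying over individual points of $Y$ and instead pass through the filter product $F\cdot G=\{c\mid\exists a\in F\text{ with }a\bs c\in G\}$ introduced for residuation algebras earlier in the paper. Since $X$ and $Y$ are Priestley topological algebras, residuation in $D$ and in $E$ preserves joins at primes, so $F_{x_1}\cdot F_{x_2}=F_{x_1\cdot x_2}$ and $F_{\varphi(x_1)}\cdot F_{\varphi(x_2)}=F_{\varphi(x_1)\cdot\varphi(x_2)}$. Consequently $\varphi(x_1\cdot x_2)\in\eta_E(e)$ unwinds to the \emph{existential} statement ``there is $a\in F_{x_1}$ with $a\bs h(e)\in F_{x_2}$'', while $\varphi(x_1)\cdot\varphi(x_2)\in\eta_E(e)$ unwinds, via $F_{\varphi(x_i)}=h^{-1}(F_{x_i})$, to ``there is $e'\in E$ with $h(e')\in F_{x_1}$ and $h(e'\bs e)\in F_{x_2}$''. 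Condition (2) is exactly the device for matching these two existentials: from a witness $a$ one obtains $e'$ with $a\le h(e')$ and $a\bs h(e)=h(e'\bs e)$, and in the converse direction the witness $e'$ is transported back to $a=h(e')$. Because both sides are now existential over lattice elements rather than universal over points of $Y$, the direction that defeats your pointwise approach goes through; this chain of equivalences is the paper's proof of (2)$\Rightarrow$(1), and I recommend recasting your argument in that form rather than searching for a ``sharper choice of $a$''.
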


\begin{proof}
We just prove that (1) and (2) are equivalent; the proof of the equivalence of (1) and (3) being similar. In order to prove that (1) implies (2), consider $a\in D$ and $e\in E$. Then $a\bs h(e)\in D$ and thus $\eta_D(a\bs h(e))$ is a clopen down-set of $X$.  By duality and since $R(x_1,x_2,x_3)$ if and only if $x_3\leq x_1\cdot x_2$, we have
\begin{align*}
\eta_D(a\bs h(e))&=\left(R[\eta_D(a),\underline{\ \ },(\eta_D(h(e)))^c]\right)^c\\
                      &=\left(R[\eta_D(a),\underline{\ \ },(\varphi^{-1}(\eta_E(e)))^c]\right)^c\\
                      &=\{z\in X\mid\forall z'\ (z'\in\eta_D(a)\implies z'\cdot z\in\varphi^{-1}(\eta_E(e))\}\\ 
                      &=\{z\in X\mid \varphi(\eta_D(a)\cdot\{z\})\subseteq\eta_E(e)\}. 
\end{align*}
Now by (1) it follows that $\varphi(\eta_D(a)\cdot\{z\})=\varphi(\eta_D(a))\cdot\{\varphi(z)\}$ so that
\[
\eta_D(a\bs h(e))=\{z\in X\mid \varphi(\eta_D(a))\cdot\{\varphi(z)\}\subseteq\eta_E(e)\}. 
\]
Also, since $\eta_E(e)$ is a down-set and $\cdot$ is order preserving, we have 
\[
\eta_D(a\bs h(e))=\{z\in X\mid {\downarrow}\varphi(\eta_D(a))\cdot\{\varphi(z)\}\subseteq\eta_E(e)\}. 
\]
Note that, since $\varphi$ is continuous, $X$ is compact, and $Y$ is Hausdorff, it follows that $\varphi(\eta_D(a))$ is closed. Also, since the down-set of a closed set in a Priestley space is again closed \cite[Exercise 11.14(ii)]{DaveyPriestley02}, it follows that ${\downarrow}\varphi(\eta_D(a))$ is a closed down-set in $Y$. But closed down-sets in Priestley spaces are all intersections of clopen down-sets, see \cite[Exercise 11.14(iii)]{DaveyPriestley02}. So  
\[
{\downarrow}\varphi(\eta_D(a))=\bigcap\{\eta_E(e')\mid \varphi(\eta_D(a))\subseteq\eta_E(e')\}
\] 
Now, by the definition of $\varphi$ as the dual of $h$, we have for all $e'\in E$
 \[
 \varphi(\eta_D(a))\subseteq \eta_E(e') \ \iff\ \eta_D(a)\subseteq\varphi^{-1}(\eta_E(e') \ \iff\ a\leq h(e').
 \]
It follows that
\[
{\downarrow}\varphi(\eta_D(a))=\bigcap\{\eta_E(e')\mid a\leq h(e'), e'\in E\}.
\] 

Accordingly the condition ${\downarrow}\varphi(\eta_D(a))\cdot\{\varphi(z)\}\subseteq\eta_E(e)$ is equivalent to 
\[
\left(\bigcap\{\eta_E(e')\mid a\leq h(e'), e'\in E\}\right)\cdot\{\varphi(z)\}\subseteq\eta_E(e),
\]
and since multiplication by $\varphi(z)$ is a continuous function, $X$ is compact, and $Y$ is $T_1$, it is a general topological fact that 
\[
\left(\bigcap\{\eta_E(e')\mid a\leq h(e'), e'\in E\}\right)\cdot\{\varphi(z)\}=\bigcap\{\eta_E(e')\cdot\{\varphi(z)\}\mid a\leq h(e'), e'\in E\}.
\]
By compactness we obtain
\begin{align*}
\eta_D(a\bs h(e))&=\{z\in X\mid \exists e'\in E\text{ with }a\leq h(e')\text{ and }\eta_E(e')\cdot\{\varphi(z)\}\subseteq\eta_E(e)\}\\
                           &=\bigcup\{\eta_D(h(e'\bs e))\mid e'\in E\text{ and }a\leq h(e')\}.
\end{align*}
And finally by compactness again, there is $e'\in E$ with $a\leq h(e')$ such that $\eta_D(a\bs h(e))=\eta_D(h(e'\bs e))$ or equivalently  $a\bs h(e)=h(e'\bs e)$.

For the converse, suppose (2) holds and let $x_1,x_2\in X$. In order to show that $\varphi(x_1\cdot x_2)=\varphi(x_1)\cdot\varphi(x_2)$, it suffices to show that for all $e\in E$ we have $\varphi(x_1\cdot x_2)\in\eta_E(e)$ if and only if $\varphi(x_1)\cdot\varphi(x_2)\in\eta_E(e)$. To this end, using the definition of the product relative to the residuals and using (2), we have the following string of equivalences:
\begin{align*}
\varphi(x_1\cdot x_2)\in\eta_E(e)&\iff h(e)\in F_{x_1}\cdot F_{x_2}\\
                                                    &\iff \exists a\in F_{x_1}\text{ with } a\bs h(e)\in F_{x_2}\\
                                                    &\iff\exists e'\in E\text{ with }h(e')\in F_{x_1}\text{ and }h(e'\bs e)\in F_{x_2}\\
                                                    &\iff\exists e'\in E\text{ with }e'\in h^{-1}(F_{x_1})=F_{\varphi(x_1)}\text{ and }e'\bs e\in F_{\varphi(x_2)}\\
                                                    &\iff e\in F_{\varphi(x_1)}\cdot F_{\varphi(x_2)}\\
                                                    &\iff \varphi(x_1)\cdot\varphi(x_2)\in\eta_E(e)
\end{align*}
\end{proof}

While one can verify the correctness of the above proof as given, we note that the result is much more transparent in the setting of canonical extensions. The following example shows that the homomorphism dual to a Boolean-topological algebra morphism need not preserve the residuation operations. 

\begin{example}\label{ex:dualnotpresres}
Let $A=\{\alpha,\beta\}$ and let $A^*$ be the free monoid over $A$, or equivalently, the set of all words over $A$ with the concatenation product. We denote by $1$ the empty word. Then $\mathcal P(A^*)$ is a Boolean algebra with a full residuated family of binary operations on it given as in discrete duality 
\begin{align*}
K\cdot L&=\{uv\mid u\in K, v\in L\}\\
 K\bs L&=\{u\in A^*\mid K\cdot\{u\}\subseteq L\}\\ 
 L/K&=\{u\in A^*\mid \{u\}\cdot K\subseteq L\}.
\end{align*}
For singleton sets, we will write $u^*$ instead of $\{u\}^*$, we will write $K\cdot L$ as $KL$, and $K\cup L$ as $K+ L$ as is usual in the theory of formal languages. We write $A^+$ for the free semigroup generated by $A$, that is, $A^+=A^*-\{1\}$. Let $C$ be the Boolean subalgebra of $\mathcal P(A^*)$ generated by the two languages $\alpha^*$ and $\beta^*$. Then $C$ is a Boolean residuation subalgebra of $\mathcal P(A^*)$ (though it is not closed under the forward operation $\cdot$). The dual space of $C$ has four elements, which may be identified with the four atoms of $C$, namely $y_1=\{1\}, y_\alpha=\alpha^+$, consisting of all non-empty words in the single letter $\alpha$, $y_\beta=\beta^+$, and $y_0=(\alpha^*+\beta^*)^c$. Also, the relational dual of the residuation on $C$ is functional. In fact, it may be verified directly that the dual of $C$ is the discrete idempotent monoid on $Y=\{y_1,y_\alpha,y_\beta,y_0\}$ in which $y_1$ is the identity element, $y_0$ is absorbent, and $y_\alpha y_\beta=y_\beta y_\alpha=y_0$.

Similarly, let $B$ be the Boolean residuation subalgebra of $\mathcal P(\{\alpha\}^*)$ with atoms $x_1=\{1\}$, and $x_\alpha=\alpha^+$. Here too it may be verified that the dual of $B$ is the discrete idempotent monoid on $X=\{x_1,x_\alpha\}$ in which $x_1$ is the identity element. 

Observe that the map $\varphi:X\to Y$ given by $x_1\mapsto y_1$ and $x_\alpha\mapsto y_\alpha$ is a Boolean-topological monoid morphism. The dual of $\varphi$ is a Boolean algebra homomorphism $h:C\to B$ for which we have 
\[
h(\beta^*\bs \alpha^*)=h(\emptyset)=\emptyset\not=\alpha^*=\{1\}\bs\alpha^*=h(\beta^*)\bs h(\alpha^*).
\]
\end{example}

As we will see in the next subsection though, the dual of a surjective Priestley topological algebra morphism does preserve residuation. This fact, in conjunction with Theorem~\ref{thrm:dualtopalgmorphism} allows us to give a nice dual characterization of Priestley topological algebra quotients. 

\subsection{Residuation ideals and quotients of Boolean-topological algebras}
\label{subsec:resideals}

In this subsection we will identify which distributive sublattices of the dual of a Priestley topological algebra correspond to its Priestley topological algebra quotients. That is, we give a characterization among all sublattices, not just among sublattices with a residuation algebra structure already known to preserve joins at primes. To solve this problem, we will show, first of all, that, in the case of a surjective Priestley topological morphism, the dual map preserves the residuation operations. The main result of this subsection is the duality-theoretic cornerstone of the classical Eilenberg-Reiterman theory.	

\begin{proposition}
Let $X$ and $Y$ be Priestley topological algebras of type $\tau$, and let $D$ and $E$ be the dual residuation algebras, respectively. If $\varphi:X\to Y$ is a surjective morphism of Priestley topological algebras of type $\tau$, then the dual of $\varphi$ embeds $E$ in $D$ as a residuation subalgebra of type $\tau$.
\end{proposition}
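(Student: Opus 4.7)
My plan is to deduce the statement from a direct computation in the dual-space picture. First, by standard Priestley duality, the dual $h$ of $\varphi$ is $\varphi^{-1}$ restricted to clopen down-sets. This is automatically a bounded distributive lattice homomorphism, and surjectivity of $\varphi$ makes $h$ injective. Hence $h$ embeds $E$ as a bounded sublattice of $D$, and the only thing left is to prove that $h$ preserves each residuation operation associated with every operation symbol in $\tau$. I will describe the binary case for $\bs$; the argument for $/$ and for higher arities is identical.

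The key observation is that, because $X$ and $Y$ are Priestley topological algebras, Proposition~\ref{prop:cont=Kripke} guarantees that the ternary relation dual to $\bs$ on each side is the functional graph $R(y_1,y_2,y_3)\iff y_1\cdot y_2\geq y_3$ of the corresponding continuous order-preserving operation $\cdot$. Unpacking the formula for $\bs$ from Section~\ref{ExtDuality} and using that clopen down-sets are in particular down-sets, this yields the semantic description
$$e_1\bs e_2\ =\ \{y\in Y : \forall y_1\in e_1,\ y_1\cdot y\in e_2\},$$
and analogously on $X$ for the residuation on $D$. Applying $h=\varphi^{-1}$ together with the fact that $\varphi$ is an algebra homomorphism, a short direct computation gives
$$h(e_1\bs e_2)\ =\ \{x\in X : \forall y_1\in e_1,\ y_1\cdot\varphi(x)\in e_2\}$$
and
$$h(e_1)\bs h(e_2)\ =\ \{x\in X : \forall x_1\in\varphi^{-1}(e_1),\ \varphi(x_1)\cdot\varphi(x)\in e_2\}.$$
Surjectivity of $\varphi$ then closes the argument, since $\varphi(\varphi^{-1}(e_1))=e_1$, so the universally quantified variables $y_1$ and $\varphi(x_1)$ range over exactly the same set and the two sets coincide.

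I do not foresee any genuine obstacle. Theorem~\ref{thrm:dualtopalgmorphism}(2), applied with $a=h(e_1)$ and $e=e_2$, already provides the inequality $h(e_1)\bs h(e_2)\leq h(e_1\bs e_2)$ essentially for free (using injectivity of $h$ and antitonicity of $\bs$ in its first coordinate), so surjectivity of $\varphi$ is needed only to upgrade this inequality to an equality, which is precisely what the preimage identity $\varphi(\varphi^{-1}(e_1))=e_1$ accomplishes. This also makes the proof completely uniform across all operation arities in $\tau$.
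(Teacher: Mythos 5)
Your argument is correct, but it takes a genuinely different route from the paper's. The paper obtains $h(e_1\bs h(e_2))$ --- more precisely, the identity $h(e_1)\bs h(e_2)=h(e_1\bs e_2)$ --- from the formula
\[
\eta_D(a\bs h(e))=\bigcup\{\eta_D(h(e'\bs e))\mid e'\in E\text{ and }a\leq h(e')\},
\]
which is established inside the proof of Theorem~\ref{thrm:dualtopalgmorphism} by a compactness argument; it then specializes to $a=h(e_1)$ and uses injectivity of $h$ (the dual of surjectivity of $\varphi$) to see that the union has a largest member, namely $\eta_D(h(e_1\bs e_2))$. You instead bypass that theorem and compute both sides pointwise, using the concrete description $U\bs V=\{y\mid\forall u\in U,\ u\cdot y\in V\}$ of the residuals on the clopen down-sets of $X$ and of $Y$ --- valid because the dual relations are the functional relations of the continuous operations and the second argument is a down-set. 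In your computation the inclusion $h(e_1\bs e_2)\subseteq h(e_1)\bs h(e_2)$ needs only that $\varphi$ is a homomorphism, while the reverse inclusion is exactly the identity $\varphi(\varphi^{-1}(e_1))=e_1$. This is more elementary and makes the role of surjectivity completely transparent (compare Example~\ref{ex:dualnotpresres}, where $\varphi$ is not surjective, $h$ is not injective, and the equality fails); the paper's route, by contrast, reuses machinery it needs anyway for the characterization of not-necessarily-surjective morphisms and isolates injectivity of $h$ as the operative hypothesis. Your closing observation that Theorem~\ref{thrm:dualtopalgmorphism}(2), together with injectivity of $h$ and antitonicity of $\bs$ in its first coordinate, already yields $h(e_1)\bs h(e_2)\leq h(e_1\bs e_2)$ is precisely the paper's argument, so the two proofs meet at that point.
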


\begin{proof}
Let $f$ be a basic operation symbol of the type $\tau$. Assume $f$ is binary. Let $\bs$ and $/$ be the residual operations dual to $f$. We show that the dual $h:E\to D$ of $\varphi$ preserves $\bs$. The proof for $/$ and for operations of higher arity is similar. 

Since $\varphi$ preserves $f$, we are in the situation of Theorem~\ref{thrm:dualtopalgmorphism}. Therefore, as we saw in the proof of that theorem, we have, for $a\in D$ and $e\in E$
\[
\eta_D(a\bs h(e))=\bigcup\{\eta_D(h(e'\bs e))\mid e'\in E\text{ and }a\leq h(e')\}.
\]
Thus, for $e_1,e_2\in E$ we have
\[
\eta_D(h(e_1)\bs h(e_2))=\bigcup\{\eta_D(h(e'\bs e_2))\mid e'\in E\text{ and }h(e_1)\leq h(e')\}.
\]
Since $\varphi$ is surjective, by duality, $h$ is injective, and thus we have $h(e_1)\leq h(e')$ if and only if $e_1\leq e'$, and when $e_1\leq e'$ then $e'\bs e_2\leq e_1\bs e_2$ and thus the collection that we take the union of above has a largest element so that 
\[
\eta_D(h(e_1)\bs h(e_2))=\eta_D(h(e_1\bs e_2))
\]
or equivalently $h(e_1)\bs h(e_2)=h(e_1\bs e_2)$ as required.
\end{proof}

As Example~\ref{ex:dualnotpresres} at the end of the previous subsection showed, the surjectivity is essential in the above proposition. In the surjective setting, we get a dual map which is a residuation algebra morphism. This is a stronger property than the one given in Proposition~\ref{prop:dualBootopalgmorphism} for duals of maps between Boolean-topological algebras. We now give an example showing that being a Boolean residuation subalgebra is not sufficient for the dual to be a Boolean-topological algebra quotient. 

\begin{example}\label{ex:JE's}
Let $B$ be the Boolean residuation subalgebra of $\mathcal P(\alpha^*)$ generated by $L_0=(\alpha^3)^*$, $L_1=(\alpha^3)^*\alpha$, and $L_2=(\alpha^3)^*\alpha^2$. Then the dual of $B$ is based on $X=\{L_0,L_1,L_2\}$ and is  isomorphic to the additive group $\Z/3\Z$ (with the operation as on the subscripts). Now let $C$ be the subalgebra of $B$ with elements $\emptyset, L_0, L_0^c$, and  $\alpha^*$. Then $C$ is closed under residuation but it is not a residuation ideal of $B$ since $L_1\bs L_0=L_2\not\in C$. One can check that the dual of $C$ is the two element discrete space $Y=\{L_0, L_0^c\}$ with the ternary relation

\[
\begin{tabular}{|c||c|c|}
	\hline
	$\,R\,\tvi$ & $L_0$ & $L_0^c$  \\
	\hline
	\vspace*{-.5cm}&&\\
	\hline
	$L_0\tvi$ & $\,L_0\,$ & $\,L_0^c\,$   \\
	\hline
	$L_0^c\tvi$ & $L_0^c$ & $L_0,L_0^c$ \\
	\hline
\end{tabular} 
\]

\bigskip

\noindent That is, $R(L_0^c,L_0^c,L_0)$ and $R(L_0^c,L_0^c,L_0^c)$ so that $R$ is not functional. Note that $C$ is a Boolean residuation subalgebra but the embedding of $C$ in $B$ does not satisfy the equivalent conditions of Theorem~\ref{thrm:dualtopalgmorphism} and the residuation algebra $C$ does not preserves joins at primes. 
\end{example}

\begin{definition}
	Let  $B$ be a residuation algebra. We call a subset $C$ of $B$ a \emph{residuation ideal} of $B$ provided 
	\begin{enumerate}
		\item $C$ is a bounded sublattice of $B$; 
		\item for all $c \in C$ and $b \in B: c / b \in C \text{ and } b \bs c \in C$.
	\end{enumerate}
	A \emph{Boolean residuation ideal} of $B$ is a residuation ideal that is a Boolean algebra.
\end{definition}

As we saw in Section~\ref{PrDuality} bounded sublattices of a bounded distributive lattice correspond to compatible quasiorders on the dual space. Furthermore we saw in Section~\ref{ExtDuality} that the additional operations of a residuation algebra correspond to relations on the dual space. It will turn out that the compatible quasiorders corresponding to residuation ideals are exactly those that are relational congruences for the 
corresponding relations in the following sense.

\begin{definition}\label{def:Rcong}
Let $X$ be an extended Priestley space and $\preceq$ a compatible quasiorder on $X$. We say that $\preceq$ is a \emph{relational congruence} on $X$ provided for each basic $(n+1)$-ary relation $R$ on $X$ and for all $x_1,\ldots,x_n, x'_1, \ldots, x'_n, z \in X$ we have
\[
[x'_1\succeq x_1, \ldots, x'_n\succeq x_n\text{ and } R(x_1,\ldots,x_n, z)]\implies
\exists z'\ [ R(x'_1, \ldots, x'_n,z')\ \text{ and }\ z'\succeq z].
\]
\end{definition} 

The following correspondence theorem, in the setting of functional relations, is the main technical result behind \cite[Theorem~7.2]{GeGrPi08}. In essentially as general a form as given here, it is due to Mirte Dekkers and was first proved in her Master's Thesis \cite{Dekkers08}.

\begin{theorem}\label{Rcong}
Let $B$ be a residuation algebra and $C$ a bounded sublattice of $B$. Furthermore let $X$ be the extended dual space of $B$ and $\preceq$ the compatible quasiorder on $X$ corresponding to $C$. Then $C$ is a residuation ideal of $B$ if and only if the quasiorder $\preceq$ is a relational congruence on $X$. 
\end{theorem}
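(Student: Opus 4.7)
The plan is to treat each direction separately for a single binary residuated pair $(\bs,/)$, and to note that higher arities are handled identically up to notational overhead. I will freely use three tools already in place: the identity $C=A_{\preceq}$ of Proposition~\ref{prop:quasiorder-down-sets}, so that membership in $C$ is equivalent to $\eta_B(\cdot)$ being a $\preceq$-downset; the translation $x'\succeq x \iff F_{x'}\cap C \subseteq F_x\cap C$; and the relational characterization $b\bs c\in I_y \iff \exists\,y_1,y_3\in X\,(R(y_1,y,y_3),\ b\in F_{y_1},\ c\in I_{y_3})$, which follows directly from the formula $U\bs V=(R[U,\underline{\ },V^c])^c$ in Section~\ref{ExtDuality}.

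For the forward direction (residuation ideal implies relational congruence), suppose $R(x,y,z)$, $x'\succeq x$, and $y'\succeq y$. I would produce $z'$ by a prime filter extension argument: it suffices to prove that $F_{x'}\cdot F_{y'}$ is disjoint from $I_z\cap C$, for then the distributive-lattice prime filter extension lemma yields a prime filter $F_{z'}\supseteq F_{x'}\cdot F_{y'}$ still missing $I_z\cap C$, which is exactly $R(x',y',z')$ together with $z'\succeq z$. To prove disjointness, assume for contradiction there is $c\in F_{x'}\cdot F_{y'}\cap I_z\cap C$, witnessed by some $a\in F_{x'}$ with $d:=a\bs c\in F_{y'}$. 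Since $c\in C$, the residuation ideal hypothesis forces $d\in C$, and then $y'\succeq y$ upgrades this to $d\in F_y$. The crucial move is to swap $a$ for $c/d$: this also lies in $C$ by the residuation ideal hypothesis, and applying the Galois identity $d'\leq a\bs c \iff a\leq c/d'$ with $d'=d=a\bs c$ yields $a\leq c/d$, hence $c/d\in F_{x'}\cap C\subseteq F_x\cap C$, so $c/d\in F_x$. A second application of the Galois law gives $(c/d)\bs c\geq d$, placing $(c/d)\bs c\in F_y$. Thus $c\in F_x\cdot F_y\subseteq F_z$, contradicting $c\in I_z$.

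For the converse, assume $\preceq$ is a relational congruence. I will show $b\bs c\in C$ for $b\in B$, $c\in C$ (the argument for $c/b$ is symmetric, using the analogous characterization of $/$ via $R$) by proving that the set $\{y\in X\mid b\bs c\in I_y\}$ is a $\preceq$-upset. Suppose $y\preceq x$ and $b\bs c\in I_y$; the characterization supplies $y_1,y_3$ with $R(y_1,y,y_3)$, $b\in F_{y_1}$, and $c\in I_{y_3}$. Feeding this into the relational congruence of Definition~\ref{def:Rcong}, with the first coordinate unchanged and the second upgraded from $y$ to $x\succeq y$, yields $y_3'\succeq y_3$ with $R(y_1,x,y_3')$. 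Since $c\in C\cap I_{y_3}$ and $y_3'\succeq y_3$ translates to $I_{y_3}\cap C\subseteq I_{y_3'}$, we conclude $c\in I_{y_3'}$; the characterization then reads off $b\bs c\in I_x$, as required.

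The main obstacle is the algebraic maneuver in the forward direction. A priori the witness $a$ lies only in $F_{x'}$, and nothing ties it to $F_x$ since $a$ need not belong to $C$; replacing $a$ by $c/d$ is what pulls the argument back into $C$, where the quasiorder $\preceq$ has traction. In higher arity the same template applies: for an $n$-ary basic symbol $f$ with residuals $f^{\#}_1,\ldots,f^{\#}_n$, one performs a cascade of residuations to replace each ``external'' witness coordinate by an element forced into $C$ by the full residuation ideal condition, before invoking the relation $R_f$ to reach the contradiction.
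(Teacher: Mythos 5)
Your proposal is correct and follows essentially the same route as the paper's proof: the forward direction establishes disjointness of $F_{x'}\cdot F_{y'}$ from (the ideal generated by) $I_z\cap C$ via the same Galois-connection maneuver of replacing the witness $a$ by $c/(a\bs c)\in C$, then invokes the Prime Filter Theorem; the converse uses the same relational characterization of $b\bs c\in I_y$ together with the congruence property. The only differences are cosmetic (working with $I_z\cap C$ rather than its down-set, and starting from $c\in F_{x'}\cdot F_{y'}$ directly instead of an element below it), which are harmless since $F_{x'}\cdot F_{y'}$ is an up-set.
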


Note that the setting here is more general than in Theorem~\ref{thrm:dualtopalgmorphism} since we do not assume that residuation preserves joins at primes, that is, that the relations on the space are functional.

\begin{proof}
We just prove the theorem for a single binary operation. Suppose that $C$ is a residuation ideal of $B$ and let $x,x',y,y',z\in X$ with $R(x,y,z)$, $x\preceq x'$, and $y\preceq y'$. Then $F_x\cdot F_y\geq F_z$, $F_{x'}\cap C\subseteq F_x$, $I_x\cap C\subseteq I_{x'}$, and similarly for the $y$'s. Now let $F=F_{x'}\cdot F_{y'}$ and $I={\downarrow}(I_z\cap C)$ where the down-set is taken in $B$. Then $F$ is a filter of $B$ and $I$ is an ideal of $B$. We claim that $F$ and $I$ are disjoint.

To this end, suppose $a\in F$ and $c\in C$ with $a\leq c$. Since $a\in F$ there is $b\in F_{x'}$ with $b\bs a\in F_{y'}$. Now $a\leq c$ implies $b\bs a\leq b\bs c$ and thus $b\bs c\in F_{y'}$. Also, as $C$ is a residuation ideal $b\bs c\in C$ and thus $b\bs c\in F_{y'}\cap C\subseteq F_y$. Since the maps $c/(\ ),(\ )\bs c$ form a Galois connection, we have 
\[
	b\bs c = (c/(b\bs c))\bs c\ \text{ and }\ b\leq c/(b\bs c).
\]
Thus we have $(c/(b\bs c))\bs c\in F_y$ and $c/(b\bs c)\in F_{x'}\cap C\subseteq F_x$ so that $c\in F_x\cdot F_y$. Now since $F_x\cdot F_y\geq F_z$, it follows that $c\in F_z$ and thus $c\not\in I_z$. That is, $F$ and $I$ are disjoint.

Finally, by the Prime Filter Theorem, we obtain $z'\in X$ with $F\subseteq F_{z'}$ and $I\cap F_{z'}=\emptyset$. It follows that $I_z\cap C\subseteq I_{z'}$ and thus $z\preceq z'$. Also $F\subseteq F_{z'}$ so that $F_{x'}\cdot F_{y'}\geq F_{z'}$ and thus $R(x',y',z')$.

For the converse, suppose $\preceq$ is a relational congruence. We want to show that 
\begin{align*}
C&=\{c\in B\mid \forall y,y'\ (y\preceq y'\implies(c\in F_{y'}\implies c\in F_y))\}\\
  &=\{c\in B\mid \forall y,y'\ (y\preceq y'\implies(c\in I_y\implies c\in I_{y'}))\}
\end{align*}
is a residuation ideal of $B$. To this end, let $c\in C$, $b\in B$, and $y,y'\in X$ with $y\preceq y'$ and $b\bs c\in I_y$. Using the fact that $b\bs c=(R[\eta_B(b),\underline{\ },(\eta_B(c))^c])^c$, we see that 
\[
b\bs c\in I_y \ \iff \ \exists x,z\in X\ [R(x,y,z)\text{ and } b\in F_x \text{ and } c\in I_z].
\]
Thus we may pick $x,z\in X$ with $R(x,y,z)$, $b\in F_x$, and $c\in I_z$. Now since $y\preceq y'$ it follows from the fact that $\preceq$ is a relational congruence that there exists $z'\in X$ with $R(x,y',z')$ and $z'\succeq z$. From  $z'\succeq z$ we obtain $I_z\cap C\subseteq I_{z'}$ and thus $c\in I_{z'}$. In all we have $R(x,y',z')$ and $b\in F_x$ and $c\in I_{z'}$ so that $b\bs c\in I_{y'}$ as desired. Similarly we can prove that $c/b \in C$ for all $c \in C, b \in B$.
\end{proof}

Restricting to the setting where the original space has functional relations we see more clearly why relational congruence is the right name for the concept introduced in Definition~\ref{def:Rcong}.

\begin{lemma}\label{lem:funtrelcong}
Let $X$ be a Priestley space, $R$ a compatible functional relation with corresponding operation $f:X^n\to X$. Further let $\preceq$ be a compatible quasiorder on $X$. Then the following conditions are equivalent:
\begin{enumerate}
\item $\preceq$ is a relational congruence for $R$;
\item For all $\overline{x},\overline{x}{\,'}\in X^n\qquad \left(\overline{x}\preceq\overline{x}{\,'}\ \implies\ f(\overline{x})\preceq f(\overline{x}{\,'})\right)$. 
\end{enumerate}
\end{lemma}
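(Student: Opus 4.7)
The plan is to exploit directly the functional representation $R(\overline{x}, z) \iff z \leq f(\overline{x})$, together with the fact (asserted in the paper at Theorem~\ref{prop:subalg duality} and Definition~\ref{def:compatible}) that any compatible quasiorder $\preceq$ on $X$ extends the Priestley order $\leq$, i.e.\ $x \leq y$ implies $x \preceq y$. Both implications are immediate manipulations once these two facts are combined, so I do not expect any serious obstacle; the only thing to be careful about is keeping straight that the quasiorder $\preceq$ is defined on outputs as well as inputs and that $\succeq$ denotes its converse.

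For the direction (2)$\,\Rightarrow\,$(1), I would start from hypotheses $\overline{x}\preceq\overline{x}{\,'}$ (componentwise) and $R(\overline{x},z)$, the latter meaning $z\leq f(\overline{x})$ by functionality. The candidate witness is simply $z' := f(\overline{x}{\,'})$. Then $R(\overline{x}{\,'},z')$ is automatic, and the required inequality $z\preceq z'$ follows from the chain $z\leq f(\overline{x})\preceq f(\overline{x}{\,'})=z'$, where the first step uses that $\preceq$ extends $\leq$ and the second is precisely hypothesis (2).

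For the direction (1)$\,\Rightarrow\,$(2), given $\overline{x}\preceq\overline{x}{\,'}$ I would set $z:=f(\overline{x})$, which gives $R(\overline{x},z)$ for free. The relational congruence property (1) then produces some $z'\in X$ with $R(\overline{x}{\,'},z')$ and $z\preceq z'$. Unpacking, $R(\overline{x}{\,'},z')$ means $z'\leq f(\overline{x}{\,'})$, and combining with $z\preceq z'$ and the fact that $\preceq$ extends $\leq$ yields $f(\overline{x})=z\preceq z'\leq f(\overline{x}{\,'})$, hence $f(\overline{x})\preceq f(\overline{x}{\,'})$ by transitivity of $\preceq$. This gives the desired equivalence, showing that in the functional case relational congruences are exactly the compatible quasiorders with respect to which the operation $f$ is monotone.
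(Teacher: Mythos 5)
Your proof is correct and follows essentially the same route as the paper's: both directions use the witness $z'=f(\overline{x}{\,'})$ (resp.\ $z=f(\overline{x})$) together with the facts that $R(\overline{x},z)\iff z\leq f(\overline{x})$ and that the compatible quasiorder $\preceq$ extends $\leq$ and is transitive. No gaps.
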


\begin{proof}
Assume (1) and let $\overline{x},\overline{x}{\,'}\in X^n$ with $x_i\preceq x_i'$ for each $i\in\{1,\ldots,n\}$. Since $R$ is functional with corresponding operation $f$, we have $R(\overline{x},f(\overline{x}))$. Thus, by (1), there is $z'\in X$ with $f(\overline{x})\preceq z'$ and $R(\overline{x}{\,'},z')$. Therefore, again because $R$ is functional with corresponding operation $f$, it follows that $z'\leq f(\overline{x}{\,'})$. Now, since $\preceq$ extends $\leq$ and is transitive we have $f(\overline{x})\preceq f(\overline{x}{\,'})$ as required.

For the converse, suppose $\overline{x},\overline{x}{\,'}\in X^n$ with $x_i\preceq x_i'$ for each $i\in\{1,\ldots,n\}$ and that $R(\overline{x},z)$. Then $z\leq f(\overline{x})$. Also, by (2), $f(\overline{x})\preceq f(\overline{x}{\,'})$, and thus $z\preceq f(\overline{x}{\,'})$. Let $z'=f(\overline{x}{\,'})$, then $R(\overline{x}{\,'},z')$ since $R$ is functional with corresponding function $f$. Also $z\preceq z'$, so (1) is satisfied.
\end{proof}

Now we just need an order theoretic and a topological generality, respectively, to be able to interpret Theorem~\ref{thrm:dualtopalgmorphism} in terms of quotients of Priestley topological algebras.

\begin{lemma}\label{lem:orderprescong}
Let $X$ be a poset, $\preceq$ be a quasiorder on $X$ extending the order on $X$, and let $\equiv\,=\,\preceq\cap\succeq$ be the equivalence relation corresponding to $\preceq$. Further, let $f:X^n\to X$ be an order preserving operation on $X$. Then the following conditions are equivalent:
\begin{enumerate}
\item For all \ $\overline{x},\overline{x}{\,'}\in X^n\qquad \left(\overline{x}\preceq\overline{x}{\,'}\ \implies\ f(\overline{x})\preceq f(\overline{x}{\,'})\right)$;
\item $\equiv$ \ is a congruence for $f$ and the quotient operation $f/{\equiv}:(X/{\equiv})^n\to X/{\equiv}$ is order preserving. 
\end{enumerate}
\end{lemma}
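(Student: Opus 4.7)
The lemma is an essentially formal equivalence between expressing a monotonicity property via the quasiorder $\preceq$ on $X$ versus expressing it via the induced partial order on the quotient $X/{\equiv}$. The plan is to first record the (routine) observation that the quotient $X/{\equiv}$ carries a well-defined partial order given by $[x]\leq_{/\equiv}[y]\iff x\preceq y$, which is well-defined precisely because $\preceq$ is a quasiorder with $\equiv\,=\,\preceq\cap\succeq$ as its symmetric part. Note also that, since $\preceq$ extends $\leq$, the quotient map $X\to X/{\equiv}$ is order preserving from $(X,\leq)$ to $(X/{\equiv},\leq_{/\equiv})$.

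For $(1)\Rightarrow(2)$, I would first show that $\equiv$ is a congruence for $f$. Suppose $x_i\equiv x_i'$ for $i=1,\ldots,n$. Then $\overline{x}\preceq\overline{x}{\,'}$ and $\overline{x}{\,'}\preceq\overline{x}$ coordinatewise, so applying (1) twice yields $f(\overline{x})\preceq f(\overline{x}{\,'})$ and $f(\overline{x}{\,'})\preceq f(\overline{x})$, hence $f(\overline{x})\equiv f(\overline{x}{\,'})$. Thus $f/{\equiv}$ is well defined. To see it is order preserving, suppose $[x_i]\leq_{/\equiv}[x_i']$ for each $i$, i.e.\ $\overline{x}\preceq\overline{x}{\,'}$. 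Then (1) gives $f(\overline{x})\preceq f(\overline{x}{\,'})$, which is exactly $[f(\overline{x})]\leq_{/\equiv}[f(\overline{x}{\,'})]$, i.e.\ $(f/{\equiv})([\overline{x}])\leq_{/\equiv}(f/{\equiv})([\overline{x}{\,'}])$.

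For $(2)\Rightarrow(1)$, suppose $\overline{x}\preceq\overline{x}{\,'}$. Then $[x_i]\leq_{/\equiv}[x_i']$ for each $i$, and the hypothesis that $f/{\equiv}$ is order preserving immediately gives $[f(\overline{x})]\leq_{/\equiv}[f(\overline{x}{\,'})]$, which unpacks to $f(\overline{x})\preceq f(\overline{x}{\,'})$.

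There is no real obstacle here; the only point requiring any care is the bookkeeping for the definition of the quotient order and the verification that the equivalence classes of $\equiv$ genuinely respect $f$, both of which are immediate consequences of applying (1) in the two directions $\overline{x}\preceq\overline{x}{\,'}$ and $\overline{x}{\,'}\preceq\overline{x}$. The order-preservation of $f$ assumed in the lemma statement is not actually used in either direction of the proof as sketched; it is included in the hypotheses because this lemma will be applied in tandem with Lemma~\ref{lem:funtrelcong} where order-compatibility of the functional relation $R$ forces $f$ to be order preserving.
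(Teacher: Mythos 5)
Your proof is correct and matches the paper's approach: the paper simply states that the equivalence is a straightforward verification using the fact that the quotient order satisfies $x/{\equiv}\,\leq\,y/{\equiv}\iff x\preceq y$, and your argument is exactly that verification carried out in detail (your side remark that the order-preservation of $f$ itself is not needed for the equivalence is also accurate).
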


\begin{proof}
This equivalence is a straight forward verification using the fact that the order on the quotient satisfies
\[
x/{\equiv}\,\leq\,y/{\equiv}\ \iff\ x\preceq y.
\]
\end{proof}

\begin{lemma}\label{lem:contcong}
Let $X$ be a compact space and $f:X^n\to X$ continuous. If $q:X\twoheadrightarrow Y$ is a Hausdorff quotient of $X$ whose kernel is a congruence for $f$, then the quotient operation $f/{\equiv}:Y^n\to Y$ is continuous.
\end{lemma}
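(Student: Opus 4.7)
The plan is to show that $f/{\equiv}$ pulls closed sets back to closed sets, exploiting compactness of $X^n$ and Hausdorffness of $Y^n$. Write $\equiv$ for the kernel of $q$, and let $\overline f = f/{\equiv}$. By the hypothesis that $\equiv$ is a congruence for $f$, the map $\overline f : Y^n \to Y$ is well-defined and satisfies the commuting relation
\[
\overline f \circ q^n = q \circ f,
\]
where $q^n : X^n \to Y^n$ denotes the product map $(x_1,\ldots,x_n)\mapsto (q(x_1),\ldots,q(x_n))$. First I would note the standing topological facts: $q$ is continuous, so $Y = q(X)$ is compact; since $Y$ is assumed Hausdorff, both $Y$ and $Y^n$ are compact Hausdorff, $X^n$ is compact, and $q^n$ is continuous and surjective.

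Next, to show $\overline f$ is continuous, I would fix an arbitrary closed set $V \subseteq Y$ and verify that $\overline f^{-1}(V)$ is closed in $Y^n$. Using surjectivity of $q^n$, one has
\[
\overline f^{-1}(V) \;=\; q^n\bigl( (q^n)^{-1}(\overline f^{-1}(V)) \bigr) \;=\; q^n\bigl( (\overline f \circ q^n)^{-1}(V) \bigr) \;=\; q^n\bigl( (q \circ f)^{-1}(V) \bigr).
\]
Since $q \circ f : X^n \to Y$ is continuous and $V$ is closed, $(q\circ f)^{-1}(V)$ is closed in the compact space $X^n$, hence compact. Its image under the continuous map $q^n$ is then a compact subset of $Y^n$, and since $Y^n$ is Hausdorff, this image is closed.

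This yields $\overline f^{-1}(V)$ closed in $Y^n$, which is the desired continuity of $\overline f$. There is no real obstacle here beyond packaging the standard quotient-map-from-compact-to-Hausdorff argument; the only point to flag explicitly is that we do not need $q^n$ itself to be a quotient map, merely that it is a continuous surjection between a compact space and a Hausdorff space, which is enough to conclude closedness of images of closed sets.
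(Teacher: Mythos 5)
Your proof is correct and follows essentially the same route as the paper: both express $(f/{\equiv})^{-1}(V)$ as $q^n\bigl((q\circ f)^{-1}(V)\bigr)$ using surjectivity of $q^n$ and commutativity of the square, and both conclude closedness from the fact that $q^n$ maps closed subsets of the compact space $X^n$ to closed subsets of the Hausdorff space $Y^n$. The only cosmetic difference is that the paper invokes "$q^n$ is a closed mapping" directly while you unwind that into the compact-image-in-Hausdorff argument.
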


\begin{proof}
We have the following diagram:
\begin{center}
\unitlength=25pt 
\begin{picture}(4,2.5)(0,1)\nullfont
	\gasset{Nframe=n}
	\node(X)(1,1){$X$} 
	\node(Xn)(1,3){$X^n$}
	\node(Y)(4,1){$Y$} 
	\node(Yn)(4,3){$Y^n$} 
	\drawedge(Xn,X){$f$} 
	\drawedge(Xn,Yn){$q^n$} 
	\drawedge(X,Y){$q$} 
	\drawedge(Yn,Y){$g$} 
\end{picture}
\end{center}
where $g=f/{\equiv}$ is the operation on the quotient given by $f$. It suffices to show that $g^{-1}(C)$ is closed whenever $C$ is closed in $Y$. For $C$ is closed in $Y$,  the set $f^{-1}(q^{-1}(C))$ is closed in $X^n$ by continuity of $q$ and $f$. Also, since $X^n$ is compact and $Y$ is Hausdorff, $q^n$ is a closed mapping and thus $q^n(f^{-1}(q^{-1}(C)))$ is closed in $Y^n$. However, since $q^n$ is surjective and the diagram commutes, we have $q^n(f^{-1}(q^{-1}(C)))=q^n((q^n)^{-1}(g^{-1}(C)))=g^{-1}(C)$.
\end{proof}

As a consequence we obtain a dual characterization of the quotients of a Priestley topological algebra.

\begin{theorem}\label{thrm:Bootopalgquotients}
Let $X$ be a Priestley topological algebra. Then the Priestley topological algebra quotients of $X$ are in one-to-one correspondence with the residuation ideals of the residuation algebra $B$ dual to $X$. 
\end{theorem}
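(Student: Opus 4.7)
The plan is to stitch together the dualities already established. Theorem~\ref{prop:subalg duality} gives a bijection between bounded sublattices of the dual residuation algebra $B$ and compatible quasiorders on $X$, and Theorem~\ref{Rcong} refines this to a bijection between residuation ideals of $B$ and compatible quasiorders that are relational congruences. Since $X$ is a Priestley topological algebra, each basic $(n+1)$-ary relation $R_f$ on $X$ is functional (it is the upper-edge relation of the continuous order-preserving operation $f$), so Lemma~\ref{lem:funtrelcong} converts the relational congruence condition into the condition that $\overline{x}\preceq\overline{x}{\,'}$ implies $f(\overline{x})\preceq f(\overline{x}{\,'})$. The rest of the argument just lifts the quotient across the underlying Priestley duality.

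For the forward direction, start with a residuation ideal $C$ of $B$ and let $\preceq$ be the corresponding compatible quasiorder on $X$ with associated equivalence $\equiv\,=\,\preceq\cap\succeq$. The Priestley quotient of $X$ by $\preceq$ is, by standard Priestley duality, the dual space of $C$, a Priestley space. Theorem~\ref{Rcong} yields that $\preceq$ is a relational congruence for each $R_f$, and Lemma~\ref{lem:funtrelcong} therefore gives that each basic operation $f$ of $X$ respects $\preceq$. Lemma~\ref{lem:orderprescong} then shows that $\equiv$ is an algebraic congruence for $f$ and that the induced operation $f/{\equiv}$ on the quotient is order preserving, while Lemma~\ref{lem:contcong} (using that $X$ is compact and the quotient Hausdorff) gives continuity of $f/{\equiv}$. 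Hence $X/{\equiv}$ is a Priestley topological algebra and the quotient map $X\twoheadrightarrow X/{\equiv}$ is a surjective morphism of Priestley topological algebras.

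For the converse direction, suppose $q\colon X\twoheadrightarrow Y$ is a surjective morphism of Priestley topological algebras. By Priestley duality $q$ is dual to an injective bounded lattice homomorphism $h\colon E\hookrightarrow B$, and by the proposition opening Section~\ref{subsec:resideals}, $h$ in fact embeds $E$ as a residuation subalgebra of $B$. To see that $h(E)$ is a residuation ideal of $B$, apply Theorem~\ref{thrm:dualtopalgmorphism}: for each basic binary operation symbol and each $a\in B$, $e\in E$ there is $e'\in E$ with $a\bs h(e)=h(e'\bs e)\in h(E)$, and analogously for $/$. Thus $h(E)$ is closed under residuation by arbitrary elements of $B$, which is exactly the residuation ideal condition. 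The two constructions are mutually inverse because both are refinements of the Priestley-duality bijection between surjective Priestley maps out of $X$ and bounded sublattices of $B$, which is already established in Theorem~\ref{prop:subalg duality}.

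The only genuinely delicate point is the matching of the two directions, i.e.\ checking that starting from a residuation ideal $C$, the dual of the quotient $X\twoheadrightarrow X/{\equiv}$ just built recovers $C$, and conversely starting from a surjection $q$, the quasiorder associated with $h(E)$ is the kernel quasiorder of $q$. But both are immediate from the general Priestley-duality part of Theorem~\ref{prop:subalg duality}, since the topological-algebra layer added by Theorem~\ref{Rcong} and Theorem~\ref{thrm:dualtopalgmorphism} is preserved by these passages. The main expository obstacle is therefore simply bookkeeping: ensuring that the correspondences between \emph{sublattice / quasiorder}, \emph{residuation ideal / relational congruence}, and \emph{order-preserving congruence / continuous quotient operation} all refer to the same underlying pair $(C,\preceq)$.
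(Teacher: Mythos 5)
Your proposal is correct and follows essentially the same route as the paper, which presents Theorem~\ref{thrm:Bootopalgquotients} precisely as the assembly of Theorem~\ref{prop:subalg duality}, Theorem~\ref{Rcong}, Lemma~\ref{lem:funtrelcong}, Lemma~\ref{lem:orderprescong}, and Lemma~\ref{lem:contcong}, with the converse direction supplied by the surjective-morphism proposition and Theorem~\ref{thrm:dualtopalgmorphism}. Your explicit bookkeeping of how the bijections match up is a reasonable elaboration of what the paper leaves implicit.
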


Apart from being important in applications, this theorem also allows us to characterize the duals of Priestley topological algebras. 

\begin{corollary}
Let $\tau$ be an operational type. Priestley topological algebras of type $\tau$ are, up to isomorphism, precisely the extended Priestley duals of residuation algebras that embed as residuation ideals in Boolean residuation algebras of type $\tau$ for which residuation is join preserving at primes. 
\end{corollary}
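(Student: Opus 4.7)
The plan is to deduce this characterization by gluing together Theorem~\ref{thrm:CharBootopalg} for the Boolean-topological case with the quotient correspondence of Theorem~\ref{thrm:Bootopalgquotients}, in both directions.

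For the forward inclusion, I would start with a Priestley topological algebra $X$ of type $\tau$. Forgetting the order, the underlying topological space is a Boolean space and the basic operations remain continuous, giving a Boolean-topological algebra of type $\tau$; Theorem~\ref{thrm:CharBootopalg} then provides a Boolean residuation algebra $B$ of type $\tau$, with residuation preserving joins at primes, whose extended Stone dual is this Boolean-topological algebra. The Priestley order $\leq$ on $X$ is a compatible partial order on this Boolean space, hence by Theorem~\ref{prop:subalg duality} it corresponds to a bounded distributive sublattice $D$ of $B$, namely the lattice of clopen down-sets of $(X,\leq)$. To upgrade $D$ to a residuation ideal of $B$, I would combine two facts: the relations $R_i$ dual to the basic operations of $B$ are functional (by the join-preservation at primes hypothesis) with corresponding operations the original $f_i$, and each $f_i$ is order preserving for $\leq$ by the Priestley topological algebra assumption; Lemma~\ref{lem:funtrelcong} then yields that $\leq$ is a relational congruence for each $R_i$, and Theorem~\ref{Rcong} concludes that $D$ is a residuation ideal of $B$. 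Finally, since $\leq$ is antisymmetric, the equivalence $\leq \cap \geq$ is equality, so the Priestley topological algebra quotient of $X$ associated to $D$ by Theorem~\ref{thrm:Bootopalgquotients} has underlying Priestley space $(X,\leq)$ and operations exactly the original $f_i$, exhibiting $X$ as the extended Priestley dual of $D$.

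For the converse inclusion, suppose $D$ is a residuation algebra of type $\tau$ embedded as a residuation ideal in a Boolean residuation algebra $B$ of type $\tau$ in which residuation is join preserving at primes. By Theorem~\ref{thrm:CharBootopalg}, the extended Stone dual $Y$ of $B$ is a Boolean-topological algebra of type $\tau$, which I would regard as a trivially ordered Priestley topological algebra. Applying Theorem~\ref{thrm:Bootopalgquotients} to $Y$ with the residuation ideal $D$ then exhibits the extended Priestley dual of $D$ as a Priestley topological algebra quotient of $Y$, supplying the required Priestley topological algebra structure.

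The main obstacle is the forward direction, specifically the simultaneous bookkeeping of the Galois correspondence between bounded sublattices and compatible quasiorders from Theorem~\ref{prop:subalg duality} and the encoding of $n$-ary operations as functional relations from Lemma~\ref{lem:funtrelcong}. The delicate point is to verify that, in the quotient, the relations $R_i/{\leq}$ on the Priestley space $(X,\leq)$ are again functional and have the original $f_i$ as corresponding operations; this ultimately rests on $f_i$ being order preserving together with the antisymmetry of $\leq$, so that the quotient coincides set-theoretically with $X$ and the quotient relations coincide with the graphs of the $f_i$ in the Priestley order.
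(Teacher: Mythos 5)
Your proposal is correct and follows essentially the same route as the paper: both directions rest on forgetting the order to pass to a Boolean-topological algebra, invoking Theorem~\ref{thrm:CharBootopalg} for the Boolean residuation algebra with join-preservation at primes, and using the quotient/residuation-ideal correspondence of Theorem~\ref{thrm:Bootopalgquotients}. The only difference is cosmetic: where the paper simply observes that the identity map $X_-\to X$ is a Priestley topological algebra quotient (via Remark~\ref{rem:Priestleyspace}) and cites Theorem~\ref{thrm:Bootopalgquotients}, you unpack that citation by verifying directly through Lemma~\ref{lem:funtrelcong} and Theorem~\ref{Rcong} that the lattice of clopen down-sets is a residuation ideal, which is a valid inlining of the same argument.
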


\begin{proof}
Suppose the dual of $X$ embeds as a residuation ideal in a Boolean residuation algebra of type $\tau$ for which residuation is join preserving at primes. Since residuation in the Boolean residuation algebra is join preserving at primes, by Theorem~\ref{thrm:CharBootopalg}, its dual, $Y$, is a Boolean-topological algebra of type $\tau$. Furthermore, since the dual of $X$ embeds as residuation ideal in the dual of $Y$, by Theorem~\ref{thrm:Bootopalgquotients} above, $X$ is a Priestley topological algebra quotient of $Y$ and thus in particular it is a Priestley topological algebra.

Conversely, if $X$ is a Priestley topological algebra, then forgetting the order on the space yields a Boolean-topological algebra, $X_-$. Also, since the Priestley order on $X$ is a compatible (quasi)order on $X_-$ (see Remark~\ref{rem:Priestleyspace}), it follows that the identity map $id:X_-\to X$ is a Priestley topological algebra quotient map. By Theorem~\ref{thrm:Bootopalgquotients}, this implies that the dual of $X$ embeds as a residuation ideal in the dual of $X_-$, which, by Theorem~\ref{thrm:CharBootopalg}, is a Boolean residuation algebra for which residuation is join preserving at primes. 
\end{proof}

												            %
\section{Profinite algebras and applications} \label{sec:ProfAlgAppl}      %
												            %

The applications of topological algebra in automata theory and finite model theory, as well as in many parts of classical algebra, are mainly concerned with profinite algebras. In this section we restrict our attention to these. First, we give a characterization of the Boolean residuation algebras dual to profinite algebras.  Next we consider the further special case of profinite completions. We show that the residuation algebra dual to the profinite completion of a (discrete) algebra is the Boolean residuation algebra of recognizable subsets of the original algebra. Finally we show how the generalization of the composition of Eilenberg's and Reiterman's theorem obtained in \cite{GeGrPi08} is a special case of the duality between sublattices of a bounded distributive lattice and quotients of its dual space.  

\subsection{Dual characterization of profinite algebras}       %

In this section we characterize the Boolean residuation algebras dual to profinite algebras. We illustrate the use of this characterization by giving a simple proof of the fact that any Boolean-topological algebra quotient of a profinite algebra is again profinite.

Let $X$ be a topological algebra of type $\tau$. By definition, $X$ is \emph{profinite} provided there is a family $\{X_i\}$ of finite $\tau$-algebras indexed by a directed set $I$ and $\tau$-algebra morphisms $f_{ij}:X_i\to X_j$ for $i\geq j$, so that $f_{jk}\circ f_{ij}=f_{ik}$ whenever $i\geq j\geq k$ and $X=\varprojlim(X_i,f_{ij})$.  If $X$ is profinite, then, by the standard construction of the inverse limit, $X$ embeds in the product of the $X_i$ and the inverse images of subsets of $X_i$ under the inverse limit maps $f_i:X\to X_i$, where $i$ ranges over all elements of $I$, form a basis for the topology of $X$. Thus $X$ is in fact a Boolean-topological algebra. For more details on inverse limits, see \cite[Sections I.4 and I.6]{Bourbaki1}. Since $X$ is a Boolean-topological algebra, by Theorem~\ref{thrm:CharBootopalg}, it follows that $X$ is the extended Stone dual of a Boolean residuation algebra $B$ of type $\tau$. Furthermore, without loss of generality, we may assume that all the maps $f_{ij}$ are surjective and thus that each $X_i$ is a topological quotient algebra of $X$. That is, by  Theorem~\ref{thrm:CharBootopalg} and Theorem~\ref{thrm:Bootopalgquotients}, $X$ is a profinite topological algebra if and only if there is a directed family $\{B_i\}$ of finite Boolean residuation ideals of $B$ such that $B=\bigcup B_i$. But this in turn is clearly equivalent to finitely generated Boolean residuation ideals of $B$ being finite.

\begin{definition}
Let $B$ be a Boolean residuation algebra. We say that $B$ is \emph{locally finite with respect to residuation ideals} provided each finite subset of $B$ generates a finite Boolean residuation ideal in $B$.
\end{definition}

We now have the following characterization of profinite topological algebras among Boolean-topological algebras.

\begin{theorem}
A Boolean-topological algebra is profinite if and only if the dual residuation algebra is locally finite with respect to residuation ideals.
\end{theorem}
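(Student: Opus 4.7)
The plan is to make precise the argument already sketched in the paragraph preceding the theorem, using Theorem~\ref{thrm:CharBootopalg} and Theorem~\ref{thrm:Bootopalgquotients} to translate between inverse-limit decompositions on the topological side and directed unions of finite Boolean residuation ideals on the algebraic side.

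For the forward direction, suppose $X=\varprojlim(X_i,f_{ij})$ with each $X_i$ finite. Replacing each $X_i$ by the image $f_i(X)\subseteq X_i$, we may assume every projection $f_i:X\twoheadrightarrow X_i$ and every bonding map $f_{ij}$ is surjective. By Theorem~\ref{thrm:CharBootopalg}, $X$ is the extended Stone dual of a Boolean residuation algebra $B$; by Theorem~\ref{thrm:Bootopalgquotients}, each surjection $f_i$ corresponds to a Boolean residuation ideal $B_i\subseteq B$, which is finite because $X_i$ is finite. The family $\{B_i\}$ is directed since the index set $I$ is directed. Because the clopen subsets of an inverse limit of finite discrete spaces are exactly the preimages of clopen (i.e.\ arbitrary) subsets of the finite quotients, we have $B=\bigcup_{i\in I} B_i$. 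Given any finite $S\subseteq B$, compactness of $I$-unions by finite subsets gives $i\in I$ with $S\subseteq B_i$, and since $B_i$ is itself closed under the lattice and residuation operations, the Boolean residuation ideal generated by $S$ in $B$ sits inside $B_i$ and is therefore finite.

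For the reverse direction, suppose $B$ is locally finite with respect to residuation ideals. Let $\{B_i\}_{i\in I}$ be the collection of all finite Boolean residuation ideals of $B$, ordered by inclusion. This collection is directed: given $B_j,B_k$, the set $B_j\cup B_k$ is finite, so by local finiteness the Boolean residuation ideal it generates is a finite Boolean residuation ideal containing both. Clearly $B=\bigcup_i B_i$. Each $B_i$ is a finite Boolean residuation algebra, so its extended Stone dual $X_i$ is a finite discrete space carrying a $\tau$-algebra structure (all operations on a finite discrete space are automatically continuous). By Theorem~\ref{thrm:Bootopalgquotients}, each inclusion $B_i\hookrightarrow B$ dualises to a surjective Priestley (here, Boolean) topological algebra quotient $f_i:X\twoheadrightarrow X_i$, and inclusions $B_j\subseteq B_i$ dualise to compatible bonding morphisms $X_i\twoheadrightarrow X_j$. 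By classical Stone duality (directed colimits of Boolean algebras correspond to cofiltered limits of Boolean spaces) the induced map $X\to\varprojlim X_i$ is a homeomorphism, and because each $f_i$ is a topological $\tau$-algebra morphism the inverse-limit structure on $\varprojlim X_i$ coincides with the $\tau$-algebra structure on $X$. Hence $X$ is profinite.

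The main obstacle is the bookkeeping in the second paragraph: one must check that extended Stone duality is genuinely compatible with forming directed colimits on the algebraic side and cofiltered limits on the topological side, in the presence of the residuation operations. This is automatic once one observes that each $B_i$ is a residuation ideal, so its residuation operations are the restrictions of those of $B$; hence the residuation structure on $B$ is determined coordinatewise by the $B_i$, which is exactly what matches the inverse-limit structure on $X$ via Theorem~\ref{thrm:CharBootopalg} applied at each finite stage.
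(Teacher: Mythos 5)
Your proposal is correct and follows the same route as the paper, which presents exactly this argument (surjective bonding maps $\leftrightarrow$ finite Boolean residuation ideals via Theorem~\ref{thrm:CharBootopalg} and Theorem~\ref{thrm:Bootopalgquotients}, directed union $\leftrightarrow$ inverse limit) in the paragraph preceding the theorem statement. You have simply filled in the bookkeeping that the paper leaves implicit, and done so accurately — in particular, the observation that the residuation ideal generated by a finite set $S$ is contained in any finite residuation ideal $B_i\supseteq S$ is the correct way to close the forward direction.
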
 

While it is straight forward to see that products and subobjects of profinite algebras are again profinite, the case of quotient is less obvious. However, using the above duality result, closure under Boolean-topological quotient algebras becomes an easy consequence.

\begin{theorem}
Any Boolean-topological algebra quotient of a profinite algebra is again profinite.
\end{theorem}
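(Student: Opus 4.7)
The plan is to dualize the problem using Theorem~\ref{thrm:Bootopalgquotients} together with the preceding characterization of profinite algebras, thereby reducing the statement to a finiteness question about residuation ideals inside a residuation ideal.

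First I would fix a Boolean-topological quotient $q : Y \twoheadrightarrow X$ with $Y$ profinite, and let $C$, $B$ denote the Boolean residuation algebras dual to $Y$, $X$ respectively. By the characterization of profinite algebras just proved, $C$ is locally finite with respect to residuation ideals. By Theorem~\ref{thrm:Bootopalgquotients}, the dual of $q$ realizes $B$ as a Boolean residuation ideal of $C$; identify $B$ with its image in $C$ throughout.

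The heart of the argument is then the following. Let $S\subseteq B$ be finite and let $T$ be the Boolean residuation ideal of $C$ generated by $S$; by local finiteness of $C$, the set $T$ is finite. I claim $T\subseteq B$ and moreover $T$ is a residuation ideal of $B$. For the inclusion, the key observation is an \emph{absorption} property: since $B$ is a residuation ideal of $C$, for every $b\in B$ and every $c\in C$ one has $b/c\in B$ and $c\bs b\in B$. Thus $B$ is closed under the Boolean operations and under both one-sided residuations by arbitrary elements of $C$. Iterating these operations starting from $S\subseteq B$ therefore never leaves $B$, which shows $T\subseteq B$. Since $T$ is a Boolean subalgebra of $B$ closed under residuals by any element of $C$, and $B\subseteq C$, it is closed under residuals by elements of $B$, so $T$ is a residuation ideal of $B$ containing $S$. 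Hence the Boolean residuation ideal of $B$ generated by $S$ is contained in $T$ and is in particular finite.

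This shows $B$ is locally finite with respect to residuation ideals, and the preceding characterization theorem then gives that $X$ is profinite. The main technical point is the absorption step identifying $T$ as living inside $B$; it is essentially immediate from the definition of a residuation ideal, but it is what allows the local-finiteness hypothesis on $C$ to transfer to $B$, and without spelling it out one might worry that residuation inside $C$ could produce new elements outside $B$.
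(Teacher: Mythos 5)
Your proof is correct and follows the same route as the paper's: dualize the quotient via Theorem~\ref{thrm:Bootopalgquotients} to a Boolean residuation ideal embedding and transfer local finiteness with respect to residuation ideals along it. The only difference is that you spell out the absorption argument showing why local finiteness passes down to a residuation ideal, a step the paper asserts in one clause without proof; your justification of it is sound.
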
 

\begin{proof}
Let $X$ be a profinite algebra. That is, the dual residuation algebra $B$ is locally finite with respect to residuation ideals. Now let $X\twoheadrightarrow Y$ be a Boolean-topological algebra quotient of $X$. Then $Y$ is the extended dual of some Boolean residuation algebra $A$ and the dual of the quotient map $X\twoheadrightarrow Y$ is an embedding $A\hookrightarrow B$ of $A$ in $B$ as a Boolean residuation ideal of $B$. Since $B$ is locally finite with respect to residuation ideals so is $A$ and thus $Y$ is also profinite.
\end{proof}

\subsection{Profinite completions and recognizable subsets of an algebra}\label{sec:profcomp}  

We now consider a further restricted class of topological algebras, namely, profinite completions of (discrete) abstract algebras.

Let $A$ be an algebra of some type $\tau$. The \emph{profinite completion} of $A$ is by definition the reflection of $A$ in the category of profinite algebras of type $\tau$. Let $Con_\omega(A)$ denote the set of all congruences $\theta$ of $A$ of finite index (i.e. for  which the quotient algebra $A/\theta$ is finite). Note that if $Con_\omega(A)$ is ordered by reverse inclusion then it is directed since the intersection of two congruences of finite index is again a congruence of finite index. Thus we obtain an inverse limit system, ${\mathcal F}_A$, indexed by $Con_\omega(A)$ as follows:  
\begin{enumerate}
\item For each $\theta\in Con_\omega(A)$ we have the finite algebra $A/\theta$;
\item Whenever $\theta\subseteq\psi$ we have a (unique) homomorphism $A/\theta\to A/\psi$ which commutes with the quotient maps $q_\theta:A\to A/\theta$ and $q_\psi:A\to A/\psi$ and thus the maps of the system also commute with each other as required. 
\end{enumerate}

\unitlength=4pt 
\begin{picture}(60,23)(0,-1)\nullfont
	\gasset{Nframe=n,Nw=6,Nh=6,Nmr=2.5,curvedepth=0}
	\put(40,15){$\text{The inverse limit system $\mathcal F_A$}$}
	\node(G)(30,3){$A/\varphi$}
	\node(F)(42,6){$A/\theta$} 
	\node(H)(30,9){$A/\psi$} 
	\node(N1)(18,11){}
	\node(N2)(15,6){}
	\node(N3)(18,1){}
	\node(A)(0,19){$A$}
	\drawedge(H,F){} 
	\drawedge(G,F){} 
	\drawedge(N1,H){} 
	\drawedge(N3,G){} 
	\drawedge[dash={1}0,curvedepth=2](A,H){}
	\drawedge[ELside=r,dash={1}0,curvedepth=3](A,G){}
	\drawedge[ELside=r,dash={1}0,curvedepth=3](A,F){}
\end{picture}

One can show that the inverse limit of this system in the category of topological algebras, $\widehat{A}=\varprojlim{\mathcal F}_A$ is the profinite completion of the algebra $A$. 

Dualising the objects and maps of the system ${\mathcal F}_A$, we obtain a direct limit system $\mathcal G_A$ of finite Boolean residuation subalgebras of type $\tau$ of $\mathcal P(A)$ with maps that are embeddings as residuation ideals. \\

\begin{picture}(60,21)(0,1)\nullfont
	\gasset{Nframe=n,Nw=9,Nh=9,Nmr=9,curvedepth=0}
	\put(40,15){$\text{The direct limit system $\mathcal G_A$}$}
	\node(G)(30,3){\quad$\mathcal P(A/\varphi)$\quad}
	\node(F)(45,6){\quad$\mathcal P(A/\theta)$} 
	\node(H)(30,9){$\mathcal P(A/\psi)$} 
	\node(N1)(16,12){}
	\node(N2)(15,6){}
	\node(N3)(16,1){}
	\node(A)(0,19){$\mathcal P(A)$}
	\drawedge(F,H){} 
	\drawedge(F,G){} 
	\drawedge(H,N1){} 
	\drawedge(G,N3){} 
	\drawedge[dash={1}0,curvedepth=-2](H,A){}
	\drawedge[ELside=r,dash={1}0,curvedepth=-3](G,A){}
	\drawedge[ELside=r,dash={1}0,curvedepth=-3](F,A){}
\end{picture}
\\

The direct limit of this system among Boolean residuation algebras of type $\tau$ is the union of the images of the embeddings $q_\theta^{-1}:\mathcal P(A/\theta)\hookrightarrow\mathcal P(A)$. The Boolean algebra underlying this union is a fundamental and much studied object in theoretical computer science, namely the \emph{Boolean algebra of recognizable subsets} of the algebra $A$. We give the standard definitions: Given a homomorphism $\varphi:A\to F$ into a finite algebra $F$, a subset $L\subseteq A$ is said to be \emph{recognized by $\varphi$} provided there is a subset $P\subseteq F$ with $L=\varphi^{-1}(P)$, or equivalently if $L=\varphi^{-1}(\varphi[L])$. A subset $L\subseteq A$ is said to be \emph{recognized by $F$} provided there is a homomorphism $\varphi:A\to F$ which recognizes $L$, and finally $L$ is said to be \emph{recognizable} provided there is a finite algebra $F$ such that $L$ is recognized by $F$. We denote the Boolean algebra of all recognizable subsets 
of $A$ by $\Rec(A)$. We have: 
\begin{align*}
\Rec(A) &=\{\varphi^{-1}(P)\mid \varphi:A\to F\supseteq P,\varphi\text{ a homomorphism, and }F\text{ finite}\} \\
             &=\bigcup\{q_\theta^{-1}({\mathcal P}(A/\theta))\mid \theta\in Con_\omega(A)\}\\
             &=\varinjlim \{\mathcal P(A/\theta)\}_{\theta\in Con_\omega(A)}.                  
\end{align*}
We would now like to conclude that the extended Stone dual of the Boolean residuation algebra $\Rec(A)$ is the profinite completion $\widehat{A}$. However, at this point the categories we are taking limits in do not quite match across the duality. So either we need to show that residuation in $\Rec(A)$ preserves joins at primes in order to show that it is in the category dual to profinite algebras or we need to show that $\widehat{A}$ is the inverse limit of the system $\mathcal F_A$ in the bigger category of extended Boolean spaces dual to Boolean residuation algebras of type $\tau$. One can verify the former. This is the content of \cite[Proposition 8]{Gehrke09}. Here we opt for the latter as it is conceptually more informative and it is an interesting fact in its own right that inverse limits in the category of extended Boolean Stone duals of any arity are given by the familiar product construction as in profinite algebras. The proof of the following theorem is a bit lengthy but follows the classical style arguments about inverse limits in compact spaces.

\begin{theorem}\label{thrm:extStoneinvlim}
Let $\tau$ be a type of extended Boolean spaces. Inverse limits in the category of extended Boolean spaces of type $\tau$ are given as in the category of topological spaces with the additional relations defined coordinate-wise.
\end{theorem}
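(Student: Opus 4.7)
The plan is: given a cofiltered inverse system $\{(X_i, \bar R_i), f_{ij}\}_{i \ge j}$ of extended Boolean spaces of type $\tau$ (each $f_{ij}$ a bounded morphism), take $X = \varprojlim X_i$ in topological spaces -- which is a Boolean space -- with projections $\pi_i : X \to X_i$, and for each $(n+1)$-ary symbol of $\tau$ define $R \subseteq X^{n+1}$ by
$$R(\bar x, x) \iff R_i(\pi_i^n(\bar x), \pi_i(x))\text{ for all } i.$$
I must verify (a) that $(X, \bar R)$ is an extended Boolean space, (b) that each $\pi_i$ is a bounded morphism, and (c) the universal property for bounded morphisms. The Forth condition for each $\pi_i$, and the corresponding Forth for any mediating continuous $g : (Y, \bar S) \to (X, \bar R)$, are immediate from the coordinate-wise definition; the substance of the proof lies in Back, which is also what will drive the verification of topological compatibility.

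To prove Back for $\pi_i$, suppose $R_i(\bar y_i, \pi_i(z))$ for some $z \in X$. For each $j \ge i$, Back for the bounded morphism $f_{ji}$ shows that
$$K_j = \{\bar w \in X_j^n \mid f_{ji}^n(\bar w) = \bar y_i \text{ and } R_j(\bar w, \pi_j(z))\}$$
is non-empty; it is closed by topological compatibility condition~(1) for $R_j$ together with continuity of $f_{ji}^n$ and the fact that singletons are closed in the Hausdorff space $X_i^n$. Forth applied to $f_{jk}$ (for $j \ge k \ge i$) yields $f_{jk}^n(K_j) \subseteq K_k$, so $\{K_j\}_{j \ge i}$ is a cofiltered inverse system of non-empty compacta, whose inverse limit is therefore non-empty. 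Any point therein extends canonically to a coherent family $(\bar w_j)_{j \in I}$, and thus to $\bar x \in X^n$ satisfying $\pi_i^n(\bar x) = \bar y_i$ and $R_j(\pi_j^n(\bar x), \pi_j(z))$ for all $j \ge i$; for $j \not\ge i$, pick $k \ge i, j$ and apply Forth for $f_{kj}$. Hence $R(\bar x, z)$.

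Topological compatibility of $R$ on $X$ follows easily once Back is available. Closedness of $R[\_,\dots,\_,x]$ comes from the expression $R[\_,\dots,\_,x] = \bigcap_i (\pi_i^n)^{-1}(R_i[\_,\dots,\_,\pi_i(x)])$. For condition~(2), every clopen in a cofiltered limit of Boolean spaces factors through a single level: given clopens $\bar U = (U_1,\dots,U_n)$ in $X$, there exist $i$ and clopens $\bar V \subseteq X_i$ with $\bar U = \pi_i^{-1}(\bar V)$. Combining Forth and Back for $\pi_i$ then yields $R[\bar U, \_] = \pi_i^{-1}(R_i[\bar V, \_])$, which is clopen because $R_i[\bar V, \_]$ is clopen in $X_i$.

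The universal property is handled by the same kind of argument. Given bounded morphisms $g_i : (Y, \bar S) \to (X_i, \bar R_i)$ commuting with the $f_{ij}$, the topological universal property yields a unique continuous $g : Y \to X$ with $\pi_i \circ g = g_i$, and Forth for $g$ is immediate. For Back, given $R(\bar x, g(y))$, set
$$L_i = (g_i^n)^{-1}(\pi_i^n(\bar x)) \cap S[\_,\dots,\_,y] \subseteq Y^n;$$
each $L_i$ is non-empty by Back for $g_i$, closed by condition~(1) for $S$ and continuity of $g_i^n$, and $L_i \subseteq L_j$ whenever $i \ge j$ by the commutation relations. By compactness of $Y^n$, $\bigcap_i L_i \ne \emptyset$, and any $\bar y$ in it satisfies $g^n(\bar y) = \bar x$ (projections separate points in $X^n$) together with $S(\bar y, y)$. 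The only genuinely nontrivial step is the compactness / non-emptiness argument for Back (and its reprise in the universal property); the rest is routine verification.
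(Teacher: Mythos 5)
Your proposal is correct and follows essentially the same route as the paper: the relation is defined coordinate-wise, the Forth conditions are immediate, and the Back conditions for the projections and for the mediating map are obtained by a compactness argument on a directed family of non-empty closed sets (the paper indexes this family by finite subsets $M$ of $I$ and invokes the finite intersection property, whereas you index by single levels $j\geq i$ and invoke non-emptiness of an inverse limit of non-empty compacta --- interchangeable arguments). One point in your favour: you explicitly verify that the coordinate-wise relation on $X$ satisfies the two topological compatibility conditions (closedness of $R[\_,\dots,\_,x]$ as an intersection of preimages of closed sets, and clopenness of $R[\overline{U},\_]$ via the fact that every clopen of the limit factors through a single level together with the already-established Back condition), a step the paper's proof leaves implicit.
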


\begin{proof}
Let $\{X_i\}_{i\in I}$ be an inverse limit system of extended Boolean spaces of type $\tau$ with corresponding bounded morphisms $f_{ij}: X_i\to X_j$ whenever $i\geq j$. Let $X$ be the inverse limit of the underlying Boolean spaces of the $\{X_i\}_{i\in I}$. That is, $X$ consists of those $\overline{x}\in \Pi_{i\in I}X_i$ such that $f_{ij}(x_i)=x_j$ whenever $i\geq j$. It is straight forward to verify that $X$ is a closed subspace of $\Pi_{i\in I}X_i$ and thus a Boolean space. It is well known to be the inverse limit in the larger category of compact Hausdorff spaces and thus it is the inverse limit of the underlying Boolean spaces of the $\{X_i\}_{i\in I}$ in the category of Boolean spaces. Suppose the type $\tau$ includes a relation symbol $R$ of arity $n$ and that the $m$th coordinate is the codomain coordinate. That is, the spaces $Y$ in the category satisfy 
\begin{enumerate}
	\item For each $y\in Y$ the set $R[\_,y,\_]$ is closed (where $y$ occurs in the $m$th coordinate);
	\item For all $U_1,\ldots,U_{m-1}$ and $V_{m+1},\ldots,V_n$ clopen subsets of $Y$, the set $R[\overline{U},\_,\overline{V}]$ is clopen,
\end{enumerate}
and bounded morphisms satisfy the (Back) condition for the $m$th coordinate as well as the (Forth) condition. See Definition~\ref{def:topprores} and Definition~\ref{def:bddmorphism} where the conditions are given for the more general extended Priestley space setting. Now, we define the relation on $X$ coordinate-wise as in the algebraic setting. That is, for $(\overline{x}_1,\ldots,\overline{x}_n)\in X^n$ we have $R_X(\overline{x}_1,\ldots,\overline{x}_n)$ if and only if $R_i(x_{1i},\ldots,x_{ni})$ for each $i\in I$ where $R_i$ is the interpretation of $R$ in the space $X_i$ and $x_{ki}$ is the $i$th coordinate of $\overline{x}_k$. We need to show that the inverse limit maps $f_i:X\to X_i$, which are just the restriction to $X$ of the  projections $\pi_i:\Pi_{j\in I}X_j\to X_i$  for $i\in I$, are bounded morphisms. For this purpose, fix $i\in I$. We show that $f_i$ is a bounded morphism. Clearly, if $R_X(\overline{x}_1,\ldots,\overline{x}_n)$ then $R_i(x_{1i},\ldots,x_{ni})$ so that the (Forth) condition holds. Now, to ease the notation and without loss of generality, we assume that $m=1$. Suppose further that $R_i(f_i(\overline{x}_{1}),x_{2i},\ldots,x_{ni})$ where $\overline{x}_1\in X$ and $(x_{2i},\ldots,x_{ni})\in X_i^{n-1}$.  We want to show that there exist $(\overline{x}_{2},\ldots,\overline{x}_{n})\in X^{n-1}$ with $R_X(\overline{x}_{1},\overline{x}_{2},\ldots,\overline{x}_{n})$ and $f_i(\overline{x}_k)=x_{ki}$ for each $k\in\{2,\ldots,n\}$. Define for each finite subset $M$ of $I$ containing $i$, the set $S_M\subseteq(\Pi_{j\in I} X_j)^{n-1}$ consisting of those $(\overline{z}_2,\ldots, \overline{z}_n)$ satisfying the following properties
\begin{enumerate}
\item $\pi_i(\overline{z}_k)=x_{ki}$ for each $k\in\{2,\ldots,n\}$;
\item $f_{jj'}(\pi_j(\overline{z}_k))=\pi_{j'}(\overline{z}_k)$ for each $j,j'\in M$ with $j\geq j'$ and $k\in\{2,\ldots,n\}$;
\item $R_j(f_j(\overline{x}_1),\pi_j(\overline{z}_2),\ldots,\pi_j(\overline{z}_n))$ for each $j\in M$.
\end{enumerate}
Since all the functions involved are continuous and the $R_j$s are closed relations, it follows that each $S_M$ is a closed subset of $(\Pi_{j\in I} X_j)^{n-1}$. We show that each $S_M$ is non-empty. Since $M$ is finite and $I$ is directed, there is $l\in I$ with $l\geq j$ for all $j\in M$. Now since $f_{li}:X_l\to X_i$ is a bounded morphism, $f_i(\overline{x}_{1})=f_{li}(f_l(\overline{x}_1))$, and $R_i(f_i(\overline{x}_{1}),x_{2i},\ldots,x_{ni})$, there are $z_{2l},\ldots,z_{nl}\in X_l$ with $R_l(f_l(\overline{x}_1),z_{2l},\ldots,z_{nl})$ and $f_{li}(z_{kl})=x_{ki}$ for each $k\in\{2,\ldots,n\}$. Now any $(\overline{z}_2,\ldots, \overline{z}_n)\in(\Pi_{j\in I} X_j)^{n-1}$ satisfying 
\[
\pi_j(\overline{z}_k)=f_{lj}(z_{kl})
\]
for each $j\in M$ is in $S_M$. So $S_M$ is non-empty. Further, it is clear that if $M\supseteq M'$ then $S_M\subseteq S_M'$ so that the collection $\{S_M\mid i\in M\subseteq I, M\text{ finite}\}$ has the Finite Intersection Property. Thus, by compactness of $(\Pi_{j\in I} X_j)^{n-1}$, the intersection of all the $S_M$'s is non-empty and for any $(\overline{x}_2,\ldots, \overline{x}_n)$ in this intersection, we have by item (2) above that $(\overline{x}_2,\ldots, \overline{x}_n)\in X^{n-1}$. Also, by item (3), we have that $R_j(f_j(\overline{x}_1),f_j(\overline{x}_2),\ldots,f_j(\overline{x}_n))$ for all $j\in I$ and thus $R_X(\overline{x}_1,\overline{x}_2,\ldots,\overline{x}_n)$. Finally, by (1), we have $f_i(\overline{x}_k)=x_{ki}$ for each $k\in\{2,\ldots,n\}$. That is, we have proved that $f_i$ is a bounded morphism.

In order to complete the proof of the fact that $X$ with the $f_{i}$'s is the inverse limit of the given system, we need to show that for any extended Boolean space, $Y$, of type $\tau$ with bounded morphisms $g_i:Y\to X_i$ for $i\in I$ such that $f_{ij}\circ g_i=g_j$ whenever $i\geq j$, the unique continuous map $g:Y\to X$ given by $g(y)=(g_i(y))_{i\in I}$ is a bounded morphism. First note that if $R_Y(y_1,\ldots,y_n)$, then as each $g_i$ is a bounded morphism we have $R_i(g_i(y_1),\ldots,g_i(y_n))$ for each $i\in I$, and thus $R_X(g(y_1),\ldots,g(y_n))$. That is, $g$ satisfies the (Forth) condition. Now, again we assume that $m=1$ to ease notation. Let $y_1\in Y$ and $(\overline{x}_2,\ldots,\overline{x}_n)\in X^{n-1}$ with $R_X(g(y_1),\overline{x}_2,\ldots,\overline{x}_n)$. Let $M\subseteq I$ be finite, then there is $i\in I$ with $i\geq j$ for each $j\in M$. Now since $g_i$ is a bounded morphism there are $(z_2,\ldots,z_n)\in Y^{n-1}$ so that $g_i(z_k)=x_{ki}$ for $2\leq k\leq n$ and $R_Y(y_1,z_2,\ldots,z_n)$. For each $j\in M$ and $k\in\{2,\ldots,n\}$ we have $g_j(z_k)=f_{ij}(g_i(z_k))=f_{ij}(x_{ki})=x_{kj}$. So each set
\begin{align*}
S_M=\{(z_2,\ldots,z_n)&\in Y^{n-1}\mid  R_Y(y_1,z_2,\ldots,z_n)\\
                                    &\text{ and }g_j(z_k)=x_{kj}\text{ whenever } 2\leq k\leq n\text{ and }j\in M\}
\end{align*}
is non-empty. Also, each $S_M$ is closed and the collection of all $S_M$'s for $M\subseteq I$ finite has the Finite Intersection Property. Any  $(y_2,\ldots,y_n)\in\bigcap\{S_M\mid M\subseteq I\text{ finite}\}$ satisfies $R_Y(y_1,y_2,\ldots,y_n)$ and $g(z_k)=\overline{x}_{k}$ for each $k\in\{2,\ldots,n\}$. That is, we have proved that $g$ is a bounded morphism as required.
\end{proof}

The following theorem now follows from the elementary fact that the systems $\mathcal F_A$ and $\mathcal G_A$ are dual to each other under extended Stone duality. 

\begin{theorem}\label{thrm:profcomp}
Let $A$ be an abstract algebra. The profinite completion $\widehat{A}$ is homeomorphic as a topological algebra to the extended Stone dual of $\Rec(A)$, the Boolean algebra with residuation operations of recognizable subsets of $A$.
\end{theorem}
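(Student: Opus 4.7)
The plan is to exploit the two ingredients already assembled just above the theorem: the identification $\Rec(A) = \varinjlim \{\mathcal P(A/\theta)\}_{\theta \in Con_\omega(A)}$ in the category of Boolean residuation algebras of type $\tau$, and Theorem~\ref{thrm:extStoneinvlim}, which says that inverse limits in the category of extended Boolean spaces of type $\tau$ are computed coordinate-wise. Once these are matched via extended Stone duality, the result should drop out by the general principle that a contravariant equivalence exchanges direct and inverse limits.

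First, I would verify object-by-object that the finite systems $\mathcal F_A$ and $\mathcal G_A$ are extended-Stone dual. Each $A/\theta$ is finite and hence a discrete Boolean-topological algebra of type $\tau$; by Theorem~\ref{thrm:CharBootopalg}, its extended Stone dual is $\mathcal P(A/\theta)$ equipped with the residuation operations associated with the basic operations of $A/\theta$ (the relevant join-preservation at primes is automatic in the finite setting, since every prime filter is principal at an atom). For $\theta\subseteq\psi$ the quotient map $A/\theta\twoheadrightarrow A/\psi$ is a surjective Boolean-topological algebra morphism, so by Theorem~\ref{thrm:Bootopalgquotients} its dual is precisely the residuation-ideal embedding $\mathcal P(A/\psi)\hookrightarrow\mathcal P(A/\theta)$ appearing in $\mathcal G_A$. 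Thus $\mathcal G_A$ is exactly the image of $\mathcal F_A$ under extended Stone duality with arrows reversed.

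Next, I would invoke the two limit computations. On the algebraic side, by construction $\widehat A = \varprojlim \mathcal F_A$ in topological algebras; Theorem~\ref{thrm:extStoneinvlim} guarantees that the same coordinate-wise construction is simultaneously the inverse limit of $\mathcal F_A$ in the (a priori larger) category of extended Boolean spaces of type $\tau$, so the universal property carries over. On the dual side, the direct limit of $\mathcal G_A$ among Boolean residuation algebras of type $\tau$ is the union $\bigcup_\theta q_\theta^{-1}(\mathcal P(A/\theta)) = \Rec(A)$, with the residuation operations inherited from $\mathcal P(A)$. Extended Stone duality, being a contravariant equivalence between the category of Boolean residuation algebras of type $\tau$ admitting such colimits and the category of extended Boolean spaces of type $\tau$ admitting the dual limits, sends one to the other; hence the extended Stone dual of $\Rec(A)$ is homeomorphic as a topological $\tau$-algebra to $\widehat A$.

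The main obstacle is the category-matching step: a priori $\widehat A$ is a limit in topological algebras whereas duality naturally produces limits in extended Boolean spaces, and these two categories need not compute limits in the same way. Theorem~\ref{thrm:extStoneinvlim} is exactly what closes this gap, and the verifications that the objects and connecting morphisms of $\mathcal F_A$ and $\mathcal G_A$ correspond under duality (both reduce to standard extended Stone duality for finite algebras together with Theorem~\ref{thrm:Bootopalgquotients}) are routine. The novelty of the result therefore lies entirely in the conceptual recognition, now justified, that $\Rec(A)$ is the \emph{correct} dual object.
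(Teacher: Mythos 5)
Your proposal is correct and follows essentially the same route as the paper: identify $\mathcal F_A$ and $\mathcal G_A$ as dual systems of finite objects, use Theorem~\ref{thrm:extStoneinvlim} to see that the topological-algebra inverse limit $\widehat{A}$ is also the inverse limit in the category of extended Boolean spaces of type $\tau$, and then let the contravariant equivalence exchange the direct limit $\Rec(A)=\varinjlim\mathcal G_A$ with that inverse limit. The paper likewise flags the category-matching issue as the only real content and resolves it exactly as you do, via Theorem~\ref{thrm:extStoneinvlim} (noting the alternative of verifying join-preservation at primes for $\Rec(A)$ directly).
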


The above theorem, in the case of a finitely generated free monoid, is one of the main results of \cite{GeGrPi08}, cf. Theorem~6.1. Profinite methods have been studied extensively in connection with automata theory including the connection between recognizable subsets and profinite completions as algebras of implicit operations \cite{Almeida94}. The fact that the Boolean algebra of recognizable sets is dual in basic Stone duality to the underlying Stone space of the profinite completion is well known \cite{Almeida94} and was used explicitly by Pippenger in \cite{Pippenger97}. However, the methods of Pippenger's paper were not adopted by others in the area. Most applications using profinite completions of algebras make very essential use of the algebraic operations so that capturing these is essential for the duality to be useful in situations where profinite completions are applied and this is new to the work \cite{GeGrPi08} for which the present paper gives the more complete duality theoretic point of view. The work \cite[Section~8.4]{RhoStei08} is similar in spirit to our work here in that it captures profinite semigroups by dual structures. However, it exploits the connection between Boolean spaces and Boolean rings rather than Boolean lattices and thus goes in the direction of ring theory rather than lattice theory. Another difference is that in the approach of Rhodes and Steinberg, the dual structure is a bi-algebra rather than an algebra: An algebraic operation $f:X^n\to X$ on a dual space most directly is dualized in Stone duality as a Boolean algebra or lattice homomorphism $h: B\to \bigoplus_{i=1}^n B_i$ into the coproduct of $n$ copies of the dual lattice $B$. This is naturally co-algebraic structure on $B$ rather than algebraic structure on $B$. The purpose of the current paper is to explore the less obvious fact that continuous algebraic operations on dual spaces can actually be captured by purely algebraic structure on the dual Boolean algebras. However, exploring the co-algebraic approach in the setting of Stone and Priestley duality seems an interesting direction for further work.
 
\subsection{Profinite term operations}                 %

As is fundamental in universal algebra, the elements of a free algebra over a finite set\footnote{We use $A$ rather than $X$ or $V$ in line with traditions in automata and languages where an alphabet $A$ plays the role of the set of variables.} $A$ of variables may be seen as terms yielding $|A|$-ary term functions on all algebras of the appropriate type. The interest of the profinite completion of such a free algebra is that its elements may be seen as generalized terms yielding so-called profinite $|A|$-ary term functions on all finite algebras of the appropriate type. This is usually shown using uniform continuity. Here we explain this phenomenon from a duality theoretic point of view that does not appeal to uniform continuity but uses a \emph{double dual} construction consisting in applying the extended discrete duality of Section~\ref{subsec:discdual} first and then the extended Stone duality (after appropriate restriction of the codomain).

 Consider a fixed algebra type $\tau$ and let $\mathcal V$ be a variety (i.e., an equational class) of algebras of type $\tau$. Let $A$ be a finite alphabet, and $F_{\mathcal V}(A)$, the free $\mathcal V$-algebra freely generated by $A$. 
 As we saw in Theorem~\ref{thrm:profcomp} above, the Boolean residuation algebra $\Rec(F_{\mathcal V}(A))$ is the extended Stone dual of the profinite completion $\widehat{F_{\mathcal V}(A)}$ of $F_{\mathcal V}(A)$. We want to show that, given a finite algebra $B$ of type $\tau$, every $x\in \widehat{F_{\mathcal V}(A)}$ yields a  function $x^B:\quad\ B^A\  \longrightarrow\ B$, which is then the profinite term function on $B$ induced by $x$. That is, for every finite $\mathcal V$-algebra, $B$, we want to define an assignment
\begin{align*}
(\_)^B:\widehat{F_{\mathcal V}(A)}\ &\longrightarrow\ {\mathcal F}(B^A,B)\\
                    x\  &\ \mapsto\ \ x^B:\ \ B^A\  \longrightarrow\ B\\
                       &\qquad\qquad\quad  (b_a)_{a\in A} \mapsto  x^B((b_a)_{a\in A}),               
\end{align*}
where ${\mathcal F}(B^A,B)$ is the set of functions from $B^A$ to $B$, that is, the set of $|A|$-ary operations on the finite algebra $B$. To this end, note that each tuple $(b_a)_{a\in A}\in B^A$ is a function 
\begin{align*}
\varphi:A & \longrightarrow B\\
 a&\ \mapsto\ b_a. 
\end{align*}
By freeness of $F_{\mathcal V}(A)$ it has a unique extension to a homomorphism 
\[
F(\varphi):F_{\mathcal V}(A)\to B.
\]
Consider the corresponding surjective homomorphism $F(\varphi):F_{\mathcal V}(A)\to B'$ where $B'=Im(F(\varphi))$. The dual of this map under the discrete duality is a complete Boolean algebra embedding that embeds ${\mathcal P}(B')$ as a residuation ideal in ${\mathcal P}(F_{\mathcal V}(A))$
\[
(F(\varphi))^{-1}:{\mathcal P}(B')\hookrightarrow{\mathcal P}(F_{\mathcal V}(A)).
\]
However, by the definition of recognizable subset, the image of this map falls entirely within $\Rec(F_{\mathcal V}(A))$.  That is, $(F(\varphi))^{-1}:{\mathcal P}(B')\hookrightarrow\Rec(F_{\mathcal V}(A))$. Denote the Stone duality functor from the category of Boolean algebras to the category of Boolean spaces by $\mathcal S$. Applying it we obtain 
\[
{\mathcal S}((F(\varphi))^{-1}):\widehat{F_{\mathcal V}(A)}\to {\mathcal S}({\mathcal P}(B'))
\]
and by Theorem~\ref{thrm:Bootopalgquotients} we conclude that this map
is a topological algebra quotient. Finally, since $B$ is finite so is $B'$ and the discrete and the Stone dualities agree and ${\mathcal S}({\mathcal P}(B'))$ is, up to natural isomorphism, just $B'$ so that 
\[
{\mathcal S}((F(\varphi))^{-1}):\widehat{F_{\mathcal V}(A)}\to B'\hookrightarrow B
\]
is the (unique by density) topological algebra homomorphism extending $F(\varphi)$. We define the \emph{term-function associated to} $x\in  \widehat{F_{\mathcal V}(A)}$ to be the $|A|$-ary operation given by:
\begin{align*}
x^B:B^A\  &\longrightarrow\ B\\
       \varphi\ \ &\ \mapsto \ \ {\mathcal S}((F(\varphi))^{-1})(x).               
\end{align*}

\begin{remark}\label{rem:uniqueext}
Once the map $\varphi$ into a finite algebra $B$ as above is extended to a 
homomorphism from the free algebra to $B$, the rest of the extension works for arbitrary algebras. That is, if $A$ now is an algebra, rather than a generating set for one, and $\varphi:A\to B$ is a homomorphism, rather than just a set map, then taking $B'=Im(\varphi)$, the map $\varphi^{-1}:\mathcal P(B')\to\Rec(A)$ is an embedding of a Boolean residuation ideal and thus the extended Stone dual map $\mathcal S(\varphi^{-1}):\widehat{A}\to B'\hookrightarrow B$ is a topological algebra map extending $\varphi$ in the sense that $\varphi=\mathcal S(\varphi^{-1})\circ e$ where $e:A\to\widehat{A}$ is the canonical injection. We will denote this (unique) topological algebra map extending $\varphi:A\to B$ by $\widehat{\varphi}:\widehat{A}\to B$.
\end{remark}

\subsection{Sublattices and equational theories}\label{sec:eqthry}  

In universal algebra, Birkhoff's variety theorem states that classes of algebras closed under homomorphic images, subalgebras, and products are precisely those that are model classes of equational theories. In finite model theory, Reiterman's theorem \cite{Reiterman82} does the same for classes of finite algebras: the classes of finite algebras closed under homomorphic images, subalgebras, and finite products are precisely those that are model classes of profinite equational theories. In the setting of monoids, Eilenberg's theorem relates certain classes of recognizable languages with classes of finite monoids closed under homomorphic images, subalgebras, and finite products. The Eilenberg-Reiterman combination thus relates certain classes of recognizable languages with profinite equational theories. This combination is a central tool in automata theory, where it is often used to obtain the decidability of classes of recognizable languages. For this reason, there are generalizations in various directions that relax one or more of the requirements on the classes of languages to which the theory applies, e.g. \cite{Pin95,PinWeil96b,Pippenger97,Polak05,Esik02,Straubing02,Kunc03}. 

In \cite{GeGrPi08}, it was shown that the Eilenberg-Reiterman combination is in fact a special instance of the Stone duality between sublattices and quotient spaces, thus providing a common generalization in which only closure under intersection and union is required for the classes of languages and duality for other closure properties accounts for the many earlier generalizations of Eilenberg-Reiterman theorems in a completely modular manner\footnote{The theorem of \cite{Kunc03} does not follow as it only requires a semilattice (along with other requirements). Encompassing this result would require generalizing \cite{GeGrPi08}  using duality for distributive meet or join semilattices \cite{BezJan11}.}. In particular, the direct duality route from lattices of languages to (profinite) equational theories is available also when the classes of finite algebras in the middle are not. The relationship between the three types of theorems may be illustrated by the following diagram.
\begin{displaymath}
\hskip1.8cm
\begin{array}{lr}
{\xy
(0,20)*+{\mbox{\mathstrut Classes of algebras}}="t0"; 
(-20,0)*+{\mbox{ \hskip-2.1cm Lattices of languages}}="t1"; 
(20,0)*+{\mbox{Equational theories \hskip -1.5cm }}="b1"; 
{\ar@{<->}_{ (1)}  "t0"; "t1"}; 
{\ar@{<->}_{(2)}"b1"; "t0"}; 
{\ar@{<->}_{ (3)} "t1"; "b1"}; 
\endxy}
&
\text{\parbox{4.5cm}{\vskip-2cm (1) Eilenberg-type theorems\\ (2) Reiterman-type  theorems \\ (3) extended Priestley duality}}
\end{array} 
\end{displaymath}

In this section we start from Theorem~\ref{prop:subalg duality} applied in the special case of a Boolean algebra of recognizable subset of an abstract algebra $A$, and then we specialize in a modular way down to the case of the (composition) of the classical Eilenberg and Reiterman theorems. We fix an algebraic type $\tau$ and, for now, also an algebra $A$ of this type. We are interested in $\Rec(A)$, the lattice (or actually Boolean algebra) of recognizable subsets of $A$ and its sublattices. A fundamental part of duality is that the subobjects of an object on one side of a duality correspond to the quotient objects of the dual object. As we have seen in Theorem~\ref{thrm:profcomp}, the dual space of $\Rec(A)$, as a Boolean 
residuation algebra, is the profinite completion, $\widehat{A}$, as a topological algebra. In a first tempo, we forget the algebraic structure on both of these objects, and we simply have a Boolean algebra and its dual Stone space. Applying Theorem~\ref{prop:subalg duality}, we obtain the following theorem:

\begin{theorem}\label{thrm:eqthry0}
Let $A$ be an abstract algebra, $\Rec(A)$ the Boolean algebra of its recognizable 
subsets, and $\widehat{A}$ its profinite completion. The assignments
\[
\Sigma\mapsto {\mathcal C}_\Sigma=\{L\in \Rec(A)\mid\forall(x,y)\in \Sigma\ 
(L\in F_y\ \Rightarrow\ L\in F_x)\}
\]
for $\Sigma\subseteq \widehat{A}\times\widehat{A}$ and
\[
{\mathcal K}\mapsto \preceq_{\mathcal K}=\{(x,y)\in \widehat{A}\times\widehat{A}
\mid\forall L\in\mathcal K\ (L\in F_y\ \Rightarrow\ L\in F_x)\}
\]
for ${\mathcal K}\subseteq \Rec(A)$ establish a Galois connection whose Galois closed sets are the compatible quasiorders on $\widehat{A}$ and the bounded sublattices of $\Rec(A)$, respectively.
\end{theorem}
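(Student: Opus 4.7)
The plan is to view this as a direct instantiation of Theorem~\ref{prop:subalg duality}, the general Galois connection between subsets of a bounded distributive lattice $B$ and subsets of $X_B \times X_B$, applied to the specific choice $B = \Rec(A)$. For this instantiation to be legal, I need exactly one ingredient: that $\widehat{A}$, viewed as a topological space (forgetting the algebra structure), is the Stone dual space of $\Rec(A)$, viewed as a Boolean algebra (forgetting the residuation operations). This is precisely the content of Theorem~\ref{thrm:profcomp}, which identifies $\widehat{A}$ as the extended Stone dual of the Boolean residuation algebra $\Rec(A)$; in particular, its underlying Boolean space is the Stone dual of the underlying Boolean algebra.

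With this identification in hand, I would proceed as follows. First, I note that the set of points of the Stone dual of $\Rec(A)$ can be taken to be $\widehat{A}$, and under this identification each $x \in \widehat{A}$ corresponds to a prime filter $F_x = \{L \in \Rec(A) \mid L \in F_x\}$ of $\Rec(A)$ (and dually a prime ideal $I_x$). Then the defining formulas for $\mathcal{C}_\Sigma$ and $\preceq_{\mathcal{K}}$ in the statement of Theorem~\ref{thrm:eqthry0} are literally the formulas for $A_E$ and $\preceq_S$ from Theorem~\ref{prop:subalg duality}, specialized to $B = \Rec(A)$. Hence the two maps in question form a Galois connection, being instances of a Galois connection already established.

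Second, for the identification of the Galois-closed sets, I invoke the conclusion of Theorem~\ref{prop:subalg duality} verbatim: the Galois-closed sets on the subset side are the bounded sublattices of $\Rec(A)$, and on the relation side they are the compatible quasiorders on the dual space, i.e.\ the compatible quasiorders on $\widehat{A}$. Here ``compatible'' is in the sense of Definition~\ref{def:compatible}, interpreted with $B = \Rec(A)$; since $\Rec(A)$ is Boolean the order on $\widehat{A}$ is trivial, so compatibility reduces to the Boolean version of the notion.

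There is essentially no obstacle: once one recognizes that the only content beyond Theorem~\ref{prop:subalg duality} is the duality statement $\widehat{A} \cong \mathcal{S}(\Rec(A))$ from Theorem~\ref{thrm:profcomp}, the proof is a one-line translation. The only mild care needed is to emphasize that this theorem deliberately forgets both the algebraic structure on $A$ and $\widehat{A}$ and the residuation structure on $\Rec(A)$; the stronger correspondences that take these structures into account (and which lead into Eilenberg--Reiterman) are obtained in the subsequent refinements by intersecting the Galois-closed classes identified here with the additional closure conditions (under residuals, under the action of $A$, etc.).
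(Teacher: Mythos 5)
Your proposal is correct and matches the paper exactly: the theorem is obtained by instantiating the general Galois connection of Theorem~\ref{prop:subalg duality} with $B=\Rec(A)$, using Theorem~\ref{thrm:profcomp} to identify $\widehat{A}$ (with its algebraic and residuation structure forgotten) as the Stone dual space of the Boolean algebra $\Rec(A)$. The paper itself offers no further proof beyond this observation, so your treatment is, if anything, slightly more explicit about the bookkeeping.
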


Here we want to understand a pair $(x,y)\in \widehat{A}\times\widehat{A}$ as a kind of equation. At this most general level, our concept of equation is more akin to a \emph{relation between generators}. We make the following definition.

\begin{definition}
Let $A$ be an abstract algebra. A \emph{profinite lattice equation} for $A$ is given by a pair of elements $x,y\in\widehat{A}$ and is denoted by $x\to y$. The equation, $x\to y$, is said to be satisfied by $L\in\Rec(A)$ if and only if $L\in F_y$ implies $L\in F_x$. That is, $x\to y$ is satisfied by $L$ if and only if $L\in {\mathcal C}_{\{(x,y)\}}$ as defined by the Galois connection in Theorem~\ref{thrm:eqthry0}.
\end{definition}

With this nomenclature, we see that the Galois connection in Theorem~\ref{thrm:eqthry0} is that between model classes and theories. That is,
a set  $\Sigma\subseteq \widehat{A}\times\widehat{A}$ is a set of equations and 
${\mathcal C}_\Sigma$ is the set of all models of $\Sigma$ while a set 
${\mathcal K}\subseteq \Rec(A)$ is a set of models and $\preceq_{\mathcal K}$
is the theory of ${\mathcal K}$. Further, the fact that the Galois closed sets of 
recognizable sets are exactly the lattices of recognizable sets becomes the following general Eilenberg-Reiterman theorem.

\begin{corollary}\label{cor:E-Rsublat}
	Let $A$ be any algebra. A collection of recognizable subsets of $A$ is a sublattice of $\Rec(A)$ if and only if it can be defined by a set of profinite lattice equations for $A$. 
\end{corollary}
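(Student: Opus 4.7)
The plan is to read the statement as an immediate instance of Theorem~\ref{thrm:eqthry0}. By the definition of satisfaction just introduced, a collection $\mathcal K\subseteq\Rec(A)$ is defined by a set $\Sigma\subseteq\widehat A\times\widehat A$ of profinite lattice equations precisely when $\mathcal K=\mathcal C_\Sigma$. Thus the corollary reduces to the assertion that the subsets of $\Rec(A)$ of the form $\mathcal C_\Sigma$ are exactly the bounded sublattices, which is the content of the Galois-closure statement on the lattice side of Theorem~\ref{thrm:eqthry0}.

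Concretely I would argue in two directions. First, given an arbitrary $\Sigma\subseteq\widehat A\times\widehat A$, the set $\mathcal C_\Sigma$ is in the image of the right-hand Galois map and hence Galois-closed, so by Theorem~\ref{thrm:eqthry0} it is a bounded sublattice of $\Rec(A)$; this shows that equationally defined classes are sublattices. Second, if $\mathcal K\subseteq\Rec(A)$ is a bounded sublattice, then $\mathcal K$ is Galois-closed, so $\mathcal K=\mathcal C_{\preceq_{\mathcal K}}$, which exhibits $\mathcal K$ as the class of models of the set of profinite lattice equations $\Sigma:=\,\preceq_{\mathcal K}\,\subseteq\widehat A\times\widehat A$.

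No further work is needed beyond invoking the theorem and unwinding the notation; there is no genuine obstacle because all the substantive content (the compactness-based argument in Proposition~\ref{prop:quasiorder-down-sets}, the identification of the dual space of $\Rec(A)$ with $\widehat A$ via Theorem~\ref{thrm:profcomp}, and the Galois-closure characterization) has already been established. The only thing worth flagging in the write-up is to make the bookkeeping transparent, namely that the relation underlying the Galois connection in Theorem~\ref{thrm:eqthry0} is precisely the satisfaction relation between recognizable subsets and profinite lattice equations, so that the Galois closed sets on the two sides are, by definition, the equationally definable classes and the theories of classes of recognizable subsets, respectively.
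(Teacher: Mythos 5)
Your proposal is correct and matches the paper's own (implicit) argument: the paper derives Corollary~\ref{cor:E-Rsublat} exactly by observing that being defined by a set of profinite lattice equations means being of the form $\mathcal C_\Sigma$, i.e.\ Galois closed, and that Theorem~\ref{thrm:eqthry0} identifies the Galois closed subsets of $\Rec(A)$ with the bounded sublattices. Your two-direction unwinding of the Galois connection is precisely the intended reading, so nothing further is needed.
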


Noting that Boolean subalgebras of $\Rec(A)$ are exactly those for which the 
corresponding compatible quasiorder is an equivalence relation and writing $x \leftrightarrow y$ for the conjunction $x \rightarrow y$ and $y\rightarrow x$, we get an equational description of the Boolean subalgebras of recognizable subsets. We call such $x \leftrightarrow y$ \emph{profinite symmetric lattice equations}. 

\begin{corollary}\label{cor:E-RsubBA}
	Let $A$ be any algebra. A collection of recognizable subsets of $A$ is a Boolean subalgebra of $\Rec(A)$ if and only if it can be defined by a set of symmetric lattice equations for $A$. 
\end{corollary}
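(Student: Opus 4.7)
The plan is to reduce this to Corollary~\ref{cor:E-Rsublat} together with the easy observation that Boolean subalgebras of $\Rec(A)$ correspond, under the Galois connection of Theorem~\ref{thrm:eqthry0}, precisely to those Galois-closed compatible quasiorders on $\widehat A$ that are symmetric (i.e., equivalence relations). Once that correspondence is in place, the equational content is immediate: a symmetric quasiorder is determined by its symmetric pairs, which are exactly the pairs $x \leftrightarrow y$.

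More concretely, I would proceed in two directions. For the easy direction, suppose $\mathcal K$ is defined by a set $\Sigma$ of symmetric profinite lattice equations $x \leftrightarrow y$. By definition $L \in \mathcal K$ iff for every such pair, $L \in F_x$ $\Longleftrightarrow$ $L \in F_y$; but this biconditional is preserved when $L$ is replaced by $L^c$ (since $L^c \in F_z$ iff $L \notin F_z$ for any prime filter $F_z$). Hence $\mathcal K$ is closed under complementation and is therefore a Boolean subalgebra.

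For the converse, start with a Boolean subalgebra $\mathcal K$ of $\Rec(A)$. By Corollary~\ref{cor:E-Rsublat}, $\mathcal K = \mathcal C_{\preceq_{\mathcal K}}$. I would show that $\preceq_{\mathcal K}$ is symmetric: if $(x,y) \in\, \preceq_{\mathcal K}$ and $L \in \mathcal K$ with $L \in F_x$, then $L^c \in \mathcal K$ as well, so the defining condition of $\preceq_{\mathcal K}$ applied to $L^c$ gives $L^c \in F_y \Rightarrow L^c \in F_x$; contrapositively, $L \in F_x \Rightarrow L \in F_y$, which is exactly $(y,x) \in\, \preceq_{\mathcal K}$. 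Thus $\preceq_{\mathcal K}$ is an equivalence relation, and each of its pairs $(x,y)$ can be written as the symmetric equation $x \leftrightarrow y$; the set $\Sigma = \{x \leftrightarrow y \mid (x,y) \in\, \preceq_{\mathcal K}\}$ then defines $\mathcal K$.

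There is no real obstacle here—the only thing to notice is the duality between complementation on $\Rec(A)$ and the flip $F_y \leftrightarrow I_y$ on prime filters/ideals of $\widehat{A}$, which converts the one-sided implication in the definition of a profinite lattice equation into its reverse. All the heavy lifting has already been done in Theorem~\ref{thrm:eqthry0} and Corollary~\ref{cor:E-Rsublat}; the present corollary is the specialization obtained by imposing symmetry on both sides of the Galois connection.
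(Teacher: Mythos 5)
Your proposal is correct and follows essentially the same route as the paper, which justifies this corollary in one line by observing that Boolean subalgebras of $\Rec(A)$ are exactly the sublattices whose associated compatible quasiorder $\preceq_{\mathcal K}$ is symmetric, so that the defining equations can be taken to be the symmetric ones $x\leftrightarrow y$. Your write-up merely supplies the details (the complementation argument via $L^c\in F_z\iff L\notin F_z$ in both directions) that the paper leaves implicit.
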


The difference between the lattice case and the Boolean case is that we need an order relation in the lattice setting as in Priestley duality. This fact was rediscovered in the theory of formal languages and automata by Pin who introduced ordered monoids and an asymmetric notion of profinite identities \cite{Pin95} without realizing the connection with Priestley duality. 

In the original Eilenberg theorem, not only is it necessary that the collections of recognizable sets be closed under Boolean complementation, they must also be  residuation ideals and be `closed under inverse images of morphisms' (see  Definition~\ref{def:closinvmorph} below). We now proceed to give Eilenberg-Reiterman theorems for each of these conditions separately. 

In order to treat residuation ideals, we need to recall the concept of linear unary polynomial from universal algebra. We will apply it to the algebraic reduct of $\widehat{A}$ for $A$ an algebra of type $\tau$. By definition, the unary polynomials over $\widehat{A}$ are the terms in one variable of the type obtained by expanding $\tau$ with a nullary operation symbol for each element of $\widehat{A}$. The new symbols are then interpreted as themselves in $\widehat{A}$. Furthermore, a unary polynomial  over $\widehat{A}$ is said to be \emph{linear} provided the variable occurs exactly once in the term. We will use the symbol {\small $\Box$} for the variable, and denote the set of all unary linear polynomials over $\widehat{A}$ by $Pol^{\widehat{A}}_{lin}$({\small $\Box$}). Each $p\in Pol^{\widehat{A}}_{lin}$({\small $\Box$}) yields a unary polynomial function $p^{\widehat{A}}:\widehat{A}\to\widehat{A}$ as usual with terms in universal algebra. The linear unary polynomials over an algebra are also sometimes referred to as `contexts' or `terms with a hole'.

\begin{definition}
Let $A$ be an abstract algebra. A \emph{profinite algebra equation} for $A$ is given by a pair of elements $x,y\in\widehat{A}$ and is denoted by $x\leq y$. An equation, $x\leq y$, is said to be satisfied by $L\in\Rec(A)$ if and only if $p^{\widehat{A}}(x)\to p^{\widehat{A}}(y)$ holds for every $p\in Pol^{\widehat{A}}_{lin}$({\small $\Box$}).
\end{definition}

\begin{theorem}\label{thrm:E-Rsubresidl}
	Let $A$ be any algebra. A collection of recognizable subsets of $A$ is a residuation ideal of $\Rec(A)$ if and only if it can be defined by a set of profinite algebra equations for $A$. 
\end{theorem}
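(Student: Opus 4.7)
The plan is to combine Theorem~\ref{Rcong}, Lemma~\ref{lem:funtrelcong}, and Theorem~\ref{thrm:eqthry0}. By Theorem~\ref{thrm:profcomp}, $\widehat{A}$ is a Boolean-topological algebra with dual $\Rec(A)$, so the relations on $\widehat{A}$ dual to the residuation operations on $\Rec(A)$ are all functional (Corollary~\ref{cor:charBootopalg}). Combining Theorem~\ref{Rcong} with Lemma~\ref{lem:funtrelcong}, a sublattice $\mathcal{C}$ of $\Rec(A)$ is a residuation ideal if and only if the corresponding compatible quasiorder $\preceq_{\mathcal{C}}$ on $\widehat{A}$ is preserved by every basic operation of $\widehat{A}$. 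A standard universal-algebra argument---changing one argument at a time and decomposing a linear term via its outermost basic operation---shows this is in turn equivalent to $\preceq_{\mathcal{C}}$ being preserved by every function $p^{\widehat{A}}$ for $p \in Pol^{\widehat{A}}_{lin}(\Box)$.

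For the forward direction, given a residuation ideal $\mathcal{C}$ with corresponding quasiorder $\preceq$, I would set $\Sigma = \{u \le v : u \preceq v\}$ and verify $\mathcal{C} = \mathcal{C}_\Sigma$. Specializing to the trivial polynomial $p = \Box$ in the definition of satisfaction immediately gives $\mathcal{C}_\Sigma \subseteq \mathcal{C}$. For the reverse inclusion, if $u \preceq v$ and $p \in Pol^{\widehat{A}}_{lin}(\Box)$, then $p^{\widehat{A}}(u) \preceq p^{\widehat{A}}(v)$ by the preceding paragraph, so any $L \in \mathcal{C}$ respecting $\preceq$ satisfies $L \in F_{p^{\widehat{A}}(v)} \Rightarrow L \in F_{p^{\widehat{A}}(u)}$, which is precisely the defining condition for $L$ to satisfy $u \le y$ for every such $p$, i.e.\ $L \in \mathcal{C}_\Sigma$.

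For the converse, given $\Sigma \subseteq \widehat{A} \times \widehat{A}$, the class $\mathcal{C}_\Sigma$ coincides with $\mathcal{C}_S$ in the sense of Theorem~\ref{thrm:eqthry0}, where
\[
S = \{(p^{\widehat{A}}(x), p^{\widehat{A}}(y)) : (x \le y) \in \Sigma,\ p \in Pol^{\widehat{A}}_{lin}(\Box)\}.
\]
By Theorem~\ref{thrm:eqthry0}, $\mathcal{C}_\Sigma$ is a sublattice, and its associated compatible quasiorder $\preceq$ is the Galois closure of $S$. Since the composition of linear unary polynomials is again linear unary, $S$ itself has the property $(u,v) \in S,\ q \in Pol^{\widehat{A}}_{lin}(\Box) \Rightarrow (q^{\widehat{A}}(u), q^{\widehat{A}}(v)) \in S$. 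The crucial step is to transfer this closure property from $S$ to $\preceq$. Fixing $q$, I would introduce $\preceq_q\, := \{(u,v) : q^{\widehat{A}}(u) \preceq q^{\widehat{A}}(v)\}$; it is reflexive, transitive, extends the (trivial) order, and contains $S$. The heart of the matter is to check that $\preceq_q$ is compatible in the sense of Definition~\ref{def:compatible}: if $q^{\widehat{A}}(u) \not\preceq q^{\widehat{A}}(v)$, compatibility of $\preceq$ produces $L' \in \Rec(A)$ with $L' \in F_{q^{\widehat{A}}(v)}$, $L' \notin F_{q^{\widehat{A}}(u)}$, and $\eta(L')$ a $\preceq$-down-set; since $q^{\widehat{A}}$ is continuous (being built from basic operations and constants on a Boolean-topological algebra), $L := (q^{\widehat{A}})^{-1}(L')$ is clopen, hence in $\Rec(A)$, and directly witnesses the compatibility of $\preceq_q$ at $(u,v)$. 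By minimality of the Galois closure, $\preceq\, \subseteq\, \preceq_q$, so $\preceq$ is preserved by $q$. Varying $q$, $\preceq$ is a relational congruence, and Theorem~\ref{Rcong} delivers that $\mathcal{C}_\Sigma$ is a residuation ideal.

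The main obstacle I anticipate is this compatibility-lifting step for $\preceq_q$, where the clopen preimage under the continuous polynomial function $q^{\widehat{A}}$ is exactly what is needed to translate a separator for $\preceq$ at the image pair into a separator for $\preceq_q$ at the source pair. The remainder is then routine bookkeeping through the Galois correspondence of Theorem~\ref{thrm:eqthry0} and the already-established equivalence, for functional dual relations, between being a relational congruence and being preserved by the operations.
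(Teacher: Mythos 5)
Your proof is correct; the forward direction is essentially the paper's (same choice of $\Sigma=\{x\leq y\mid x\preceq y\}$, same use of Theorem~\ref{Rcong} and Lemma~\ref{lem:funtrelcong}, and the same one-coordinate-at-a-time induction to get preservation of $\preceq$ by all linear unary polynomial functions), but your converse takes a genuinely different route. The paper argues on the algebra side: for $L$ satisfying $x\leq y$ and arbitrary $K\in\Rec(A)$ it computes $\eta(K\bs L)=\left(R[\eta(K),\underline{\ },(\eta(L))^c]\right)^c$, uses the adjunction with the lifted operation $g$, and observes that $g(w,p(\underline{\ \ }))$ is again a linear unary polynomial, so that $K\bs L$ and $L/K$ again satisfy $x\leq y$; together with the sublattice part from Theorem~\ref{thrm:eqthry0} this exhibits $\mathcal C_\Sigma$ directly as a residuation ideal. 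You argue entirely on the space side: you saturate $\Sigma$ under linear unary polynomials to get $S$, identify $\preceq_{\mathcal C_\Sigma}$ with the Galois closure of $S$, and obtain $\preceq\,\subseteq\,\preceq_q$ from minimality of the closure once you check that the preimage quasiorder $\preceq_q$ is compatible and contains $S$ --- the compatibility check being exactly the observation that $(q^{\widehat{A}})^{-1}$ of a clopen $\preceq$-down-set is a clopen $\preceq_q$-down-set; Theorem~\ref{Rcong} and Lemma~\ref{lem:funtrelcong} then finish. Both arguments ultimately hinge on the continuity of the operations on $\widehat{A}$ (equivalently, the functionality of the dual relations). The paper's version is shorter and self-contained in that direction and produces the residuation closure explicitly; yours is more symmetric, since both implications pass through the ``residuation ideal $=$ congruence quasiorder'' equivalence, and it isolates a reusable fact (preimages of compatible quasiorders under continuous polynomial endofunctions are compatible), at the price of leaning on the full Galois-connection machinery of Theorems~\ref{prop:subalg duality} and~\ref{thrm:eqthry0}. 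The only point worth tightening in your write-up is the slight abuse $L:=(q^{\widehat{A}})^{-1}(L')$: what you mean is the element of $\Rec(A)$ whose image under $\eta_{\Rec(A)}$ is $(q^{\widehat{A}})^{-1}(\eta(L'))$, which exists precisely because that preimage is clopen in the dual space of $\Rec(A)$.
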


\begin{proof}
If $\mathcal C$ is a residuation ideal of $\Rec(A)$, then by Theorem~\ref{Rcong} the corresponding compatible quasiorder $\preceq$ is a congruence on $\widehat{A}$. But by Lemma~\ref{lem:funtrelcong}, since the dual relations on $\widehat{A}$ are functional, we have for each basic operation $f$ of arity $n$ and  for all $\overline{x},\overline{x}{\,'}\in \widehat{A}^n$
\[
\hskip4cm  \overline{x}\preceq\overline{x}{\,'}\ \implies\ f(\overline{x})\preceq f(\overline{x}{\,'}).\hskip3.75cm (*)\\
 \]
 Now, take as set $\Sigma$ of profinite algebra equation all the equations $x\leq y$ such that $x\preceq y$. Then it is clear that if each equation in $\Sigma$ holds for $L$, then $L\in\mathcal C$ since already each $x\to y$ for $x\leq y\in\Sigma$ holding in $L$ implies $L\in\mathcal C$ by Theorem~\ref{thrm:eqthry0}. Conversely, if $x\preceq y$ and $f$ is an $n$-ary basic operation,  $1\leq i\leq n$, and $u_1,\ldots,u_{i-1},v_{i+1},\ldots, v_n\in\widehat{A}$, then $(\overline{u},x,\overline{v})\preceq (\overline{u},y,\overline{v})$ and thus $f(\overline{u},x,\overline{v})\preceq f(\overline{u},y,\overline{v})$ by (*). Since the unary polynomials are built up inductively by applying the basic operations in this manner, we see that (*) implies that for each 
$p\in Pol^{\widehat{A}}_{lin}$({\small $\Box$})
\[
x\preceq y \implies p^{\widehat{A}}(x)\preceq p^{\widehat{A}}(y)\\
\]
That is, for $x$ and $y$ with $x\preceq y$, the set of all $x'\to y'$ corresponding to $x\leq y$ is contained in the set of pairs in $\preceq$. It follows that $x\leq y$  holds for all $L\in \mathcal C$. That is, if $\mathcal C$ is a residuation ideal in $\Rec(A)$ then it is defined by the set $\Sigma=\{x\leq y\mid x\preceq y\}$.
 
For the converse, suppose $L\in \Rec(A)$ satisfies the profinite algebra equation $x\leq y$. We want to show that for any basic operation $g$ of arity $m$ of the type and any $j$ with $1\leq j\leq m$, the $j$th residual $g_j^\sharp$ applied to $L$ in the numerator coordinate and an arbitrary $\overline{K}\in \Rec(A)^{m-1}$ in the denominator coordinates yields an element of $\Rec(A)$ which satisfies $x\leq y$. We just argue in the case of a binary $g$ and $m=1$. The proof for general $g$ and $m$ is similar but much more cumbersome with respect to notation. Let $p\in Pol^{\widehat{A}}_{lin}$({\small $\Box$}), then we want to show that $K\bs L$ satisfies $p^{\widehat{A}}(x)\to p^{\widehat{A}}(y)$. So suppose $K\bs L\in F_{p^{\widehat{A}}(y)}$. By duality this is the same as $p^{\widehat{A}}(y)\in \eta(K\bs L)$ where $\eta=\eta_{\Rec(A)}:\Rec(A)\hookrightarrow\mathcal P(\widehat{A})$ is the Stone embedding. But on the dual space $\widehat{A}$, the residuation operations are given by the graph $R$ of $g$. That is, $\eta(K\bs L)=(R[\eta(K),\underline{\ },(\eta(L)^c]^c$ and this operation on $\mathcal P(\widehat{A})$ has the lifting of $g$ to subsets as lower adjoint, so
\[
\{p^{\widehat{A}}(y)\}\subseteq (R[\eta(K),\underline{\ },(\eta(L)^c]^c\iff g(\eta(K),\{p^{\widehat{A}}(y)\})\subseteq \eta(L).
\] 
That is, for each $w\in \eta(K)$, we have 
\[
g(w,p^{\widehat{A}}(y))\in \eta(L).
\]
Now, for each $w\in\widehat{A}$, the term with constants from $\widehat{A}$ given by $q_w(\text{{\small $\Box$}})=g(w,p(\text{{\small $\Box$}}))$ is again a linear unary polynomial over $\widehat{A}$, and we have $q_w^{\widehat{A}}(y)\in \eta(L)$ for each $w\in \eta(K)$. Since $L$ satisfies $x\leq y$  and $q_w^{\widehat{A}}(y)\in \eta(L)$ is equivalent to $L\in F_{q_w^{\widehat{A}}(y)}$, it follows that $L\in F_{q_w^{\widehat{A}}(x)}$ for each $w\in \eta(K)$. Going backwards in the equivalences used above for $y$, we then obtain $g(\eta(K),\{p^{\widehat{A}}(x)\})\subseteq \eta(L)$ and finally $p^{\widehat{A}}(x)\in \eta(K\bs L)$ or $K\bs L\in F_{p^{\widehat{A}}(x)}$. That is, we have shown that $K\bs L$ satisfies $p^{\widehat{A}}(x)\to p^{\widehat{A}}(y)$ as required.
 \end{proof}

\begin{remark}
These profinite algebra equations are easier to describe in the case of monoids, see \cite{GeGrPi08}. There, for $A$ a monoid and $x,y\in\widehat{A}$, the equation  $x\leq y$ was taken to mean that $uxv\to uyv$ holds for all $u,v\in\widehat{A}$. This is because each unary linear polynomial over a monoid is equivalent to one of the form $u\text{{\small $\Box$}}v$ for some $u,v\in\widehat{A}$. 
\end{remark}

So far our equations are `local' in the sense that they are not invariant under substitution. The last ingredient of the original Reiterman theorem is this invariance. For this purpose we place ourselves within some fixed variety $\mathcal V$ of abstract algebras. A \emph{class of recognizable sets} for $\mathcal V$ is an assignment $A\mapsto \mathcal C(A)$ for each finite alphabet $A$, where $\mathcal C(A)\subseteq \Rec(\FVA)$. We call such a class a \emph{lattice class} provided $\mathcal C(A)$ is a sublattice of $\Rec(\FVA)$ for each finite alphabet $A$. Furthermore, a \emph{class of equations} for $\mathcal V$ is an assignment $A\mapsto \Sigma(A)$ for each finite alphabet $A$, where $\Sigma(A)\subseteq \widehat{\FVA}\times\widehat{\FVA}$. We say that a class $\mathcal C$ is given by a class of equations $\Sigma$ provided, for each finite alphabet $A$, we have that $\mathcal C(A)$ is given by $\Sigma(A)$. Thus Corollary~\ref{cor:E-Rsublat} tells us that a class of recognizable sets is a lattice class if and only if it is given by some class of equations.  

Notice that given  finite alphabets $A$ and $B$ and a homomorphism $\sigma:\FVA\to\FVB$, any recognizable subset $L$ of $\FVB$ has an inverse image under $\sigma$ which is a recognizable subset of $\FVA$, where the recognizing morphism is the pre-composition by $\sigma$ of the recognizing homomorphism for $L$. That is, $\sigma$ induces a Boolean algebra homomorphism
\[
\Rec(\sigma):\Rec(\FVB)\to\Rec(\FVA), L\mapsto \sigma^{-1}(L).
\]
The Stone dual of this homomorphism is a continuous function
\[
\widehat{\sigma}:\widehat{\FVA}\to\widehat{\FVB}.
\]
Since it extends $\sigma$, it is in fact also the unique continuous extension of $\sigma$.

\begin{definition}\label{def:closinvmorph}
A lattice class $\mathcal C$ of recognizable sets is said to be \emph{closed under inverse images of morphisms} provided, whenever $A$ and $B$ are finite alphabets and $\sigma:\FVA\to\FVB$ is a homomorphism, then $L\in\mathcal C(B)$ implies $\sigma^{-1}(L)\in\mathcal C(A)$.

A class $\Sigma$ of equations is said to be \emph{closed under substitution} provided, whenever $A$ and $B$ are finite alphabets and $\sigma:\FVA\to\FVB$ is a homomorphism, then $x\to y\in\Sigma(A)$ implies  $\widehat{\sigma}(x)\to \widehat{\sigma}(y)\in\Sigma(B)$ .
\end{definition}

\begin{theorem}\label{thrm:E-Rinvmorph}
Let $\mathcal C$ be a lattice class of recognizable subsets of a variety $\mathcal V$. Then $\mathcal C$ is closed under inverse images of morphisms if and only if it is given by some equational class which is closed under substitution. 
\end{theorem}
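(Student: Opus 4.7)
The plan is to apply Corollary~\ref{cor:E-Rsublat} alphabet-by-alphabet and then check that the Galois correspondence between lattices of languages and sets of profinite lattice equations is compatible with the operation of pullback along a homomorphism $\sigma:\FVA\to\FVB$. The forward direction will take $\Sigma(A)$ to be the largest set of equations defining $\mathcal C(A)$, namely $\preceq_{\mathcal C(A)}$, and the backward direction will use any given defining $\Sigma$ directly. Both directions reduce to a single translation lemma phrased below.

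The key translation lemma I would prove first: for any homomorphism $\sigma:\FVA\to\FVB$, any $L\in\Rec(\FVB)$, and any $x,y\in\widehat{\FVA}$,
\[
\sigma^{-1}(L)\text{ satisfies }x\to y\quad\Longleftrightarrow\quad L\text{ satisfies }\widehat{\sigma}(x)\to\widehat{\sigma}(y).
\]
This is immediate from the fact that $\widehat{\sigma}$ is, by definition, the Stone dual of the Boolean algebra homomorphism $\Rec(\sigma):L\mapsto\sigma^{-1}(L)$, so that for any $z\in\widehat{\FVA}$ one has $z\in\eta_{\Rec(\FVA)}(\sigma^{-1}(L))$ if and only if $\widehat{\sigma}(z)\in\eta_{\Rec(\FVB)}(L)$; rewriting membership in $\eta(\,\cdot\,)$ as membership of the language in the appropriate prime filter $F_z$ or $F_{\widehat{\sigma}(z)}$ and pairing the implications for $y$ and for $x$ gives the equivalence.

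With the translation lemma in hand, both implications are a short rearrangement. For the forward direction, assume $\mathcal C$ is closed under inverse images of morphisms, set $\Sigma(A)=\preceq_{\mathcal C(A)}$ (so $\mathcal C(A)$ is defined by $\Sigma(A)$ by Corollary~\ref{cor:E-Rsublat}), and let $x\to y\in\Sigma(A)$ and $L\in\mathcal C(B)$; since $\sigma^{-1}(L)\in\mathcal C(A)$ it satisfies $x\to y$, so by the lemma $L$ satisfies $\widehat{\sigma}(x)\to\widehat{\sigma}(y)$, which gives $\widehat{\sigma}(x)\to\widehat{\sigma}(y)\in\Sigma(B)$. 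For the converse, assume $\mathcal C$ is defined by a substitution-closed class $\Sigma$, take $L\in\mathcal C(B)$, and let $x\to y\in\Sigma(A)$; by substitution-closure $\widehat{\sigma}(x)\to\widehat{\sigma}(y)\in\Sigma(B)$, so $L$ satisfies it, and the lemma then yields that $\sigma^{-1}(L)$ satisfies $x\to y$. Since $x\to y$ was arbitrary in $\Sigma(A)$, we get $\sigma^{-1}(L)\in\mathcal C(A)$.

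There is no real obstacle here beyond setting up the bookkeeping: the content is entirely in the fact that the map $\widehat{\sigma}$ on profinite completions is, by construction (Remark~\ref{rem:uniqueext} and the discussion preceding Definition~\ref{def:closinvmorph}), the Stone dual of the inverse-image map $\Rec(\sigma)$, so pullback of languages on one side and substitution on equations on the other side are literally the two sides of a single duality. Once Corollary~\ref{cor:E-Rsublat} has been applied uniformly in $A$ to separate the purely sublattice-theoretic content from the question of how the classes behave under morphisms between different free algebras, what remains is this one-line duality observation.
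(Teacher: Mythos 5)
Your proposal is correct and follows essentially the same route as the paper: your ``translation lemma'' is exactly the paper's isolated observation that $L\in F_{\widehat{\sigma}(x)}$ if and only if $\Rec(\sigma)(L)\in F_x$, and your forward direction (taking $\Sigma(A)=\preceq_{\mathcal C(A)}$ and pushing equations forward along $\widehat{\sigma}$) is the paper's argument verbatim. The only difference is that you also spell out the converse direction, which the paper leaves implicit; your version of it is correct.
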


\begin{proof}
Before we prove the theorem, it is worthwhile isolating the following fact: If $\sigma:\FVA\to\FVB$ is a homomorphism, then $\Rec(\sigma):\Rec(\FVB)\to\Rec(\FVA)$ is a Boolean algebra homomorphism and $\widehat{\sigma}:\widehat{\FVA}\to\widehat{\FVB}$ is the dual continuous function. Now for $L\in \Rec(\FVB)$ and $x\in\widehat{\FVA}$ we have
\begin{align*}
L\in F_{\widehat{\sigma}(x)}&\iff \widehat{\sigma}(x)\in \eta_{\Rec(\FVB)}(L)\\
&\iff x\in \widehat{\sigma}^{-1}(\eta_{\Rec(\FVB)}(L))=\eta_{\Rec(\FVB)}(\Rec(\sigma)(L))\\
& \iff \Rec(\sigma)(L)\in F_x.
\end{align*}
Suppose $\mathcal C$ is a lattice class of recognizable sets closed under inverse images of morphisms. Since $\mathcal C$ is a lattice class, it is given by the equational class $\Sigma:A\mapsto \preceq_{\mathcal C(A)}$ as defined in Theorem~\ref{thrm:eqthry0}. Let $A$ and $B$ be finite alphabets and $\sigma:\FVA\to\FVB$ a homomorphism. Since $\mathcal C$ is closed under inverse morphisms, $\Rec(\sigma)(\mathcal C(B))\subseteq\mathcal C(A)$. Let $x\to y$ be an equation in $\Sigma(A)=\preceq_{\mathcal C(A)}$ and $L\in\mathcal C(B)$, then $\Rec(\sigma)(L)$ satisfies $x\to y$. We want to show that $L$ satisfies $\widehat{\sigma}(x)\to\widehat{\sigma}(y)$. To this end assume that  $L\in F_{\widehat{\sigma}(y)}$. Then by the observation above we have $\Rec(\sigma)(L)\in F_y$. Since $\Rec(\sigma)(L)$ satisfies $x\to y$ it follows that $\Rec(\sigma)(L)\in F_x$ and thus, again by the observation above, $L\in F_{\widehat{\sigma}(x)}$. That is, $L$ satisfies $\widehat{\sigma}(x)\to\widehat{\sigma}(y)$ as required.
\end{proof}

Closure under the lattice operations, Boolean complement, residuation, and inverses of morphisms are the hypotheses of the original Eilenberg theorem. As mentioned earlier, various generalizations have allowed the relaxation of certain of these hypotheses while keeping others. The treatment in \cite{GeGrPi08}, for which the duality theoretic components have been given above, is the first fully modular treatment and the first to allow the treatment of lattices of recognizable languages without any further properties. We summarize the results and the location of their proofs in the following table.
\\

\hskip-.3cm
\begin{tabular}{|c|c|c|}
	\hline
	\textbf{Class closed under} & \textbf{Equations} & \textbf{Result} \\
	\hline
	$\cup, \cap$ & $u \rightarrow v$ & Theorem~\ref{thrm:eqthry0} (Corollary~\ref{cor:E-Rsublat}) \\
	\hline
	complement  & $u \leftrightarrow v$ & Theorem~\ref{thrm:eqthry0} (Corollary~\ref{cor:E-RsubBA})  \\
	\hline
	residuation ideal & $u \leq v$ & Theorem~\ref{thrm:E-Rsubresidl}  \\
	\hline
	  inverses of morphisms& substitution invariant & Theorem~\ref{thrm:E-Rinvmorph}\\
	\hline
\end{tabular}
\vskip.5cm
A full account of the method ensuing from these results will be treated elsewhere, see also \cite{pin09} and \cite{Pin2012}. Some applications have already appeared in the literature, see e.g. \cite{BraPin09} and \cite{KuLa12}. A further consequence of the relationship between Eilenberg-Reiterman theory and extended Stone duality is that it allows the application of duality also for classes of languages outside the recognizable fragment. This is the subject of the paper \cite{GehrkeGrigorieffPin10} and of an on-going investigation into language classes given by logic fragments with arbitrary natural number predicates by Gehrke, Krebs, and Pin. This work has also influenced a number of other related works such as \cite{BezKupPan12,BBHPRS,SelKon11,B-BPiS-E12} and it would be interesting to explore the connections of our work with \cite{RhoStei08}.
\bibliography{Duality}

\def\No{\kern-.25em\lower.2ex\hbox{\char'27}}%
\def\showlabel#1{}%
\begin{thebibliography}{10}

\bibitem{Abramsky91}
{\sc S.~Abramsky}, Domain theory in logical form, {\em Annals of Pure and
  Applied Logic}\/ {\bf 51},(1-2) (1991), 1--77.\showlabel{Abramsky91}

\bibitem{Almeida94}
{\sc J.~Almeida}, {\em Finite Semigroups and Universal Algebra}, World
  Scientific, Singapore, 1994.\showlabel{Almeida94}

\bibitem{Almeida05}
{\sc J.~Almeida}, Profinite semigroups and applications, in {\em Structural
  theory of automata, semigroups, and universal algebra}, pp.~1--45, {\em NATO
  Sci. Ser. II Math. Phys. Chem.}\/ vol.~207, Springer, Dordrecht, 2005.
\newblock Notes taken by Alfredo Costa.\showlabel{Almeida05}

\bibitem{B-BPiS-E12}
{\sc A.~Ballester-Bolinches, J.-{\'E}. Pin and X.~Soler-Escriv\`a}, Formations
  of finite monoids and formal languages: Eilenberg's variety theorem
  revisited, {\em Forum Mathematicum}, Online June 2012.\showlabel{B-BPiS-E12}

\bibitem{BB06}
{\sc J.~Berstel, L.~Boasson, O.~Carton, B.~Petazzoni and J.-{\'E}. Pin},
  Operations preserving recognizable languages, {\em Theoretical Computer
  Science}\/ {\bf 354} (2006), 405--420.\showlabel{BB06}

\bibitem{BezJan11}
{\sc G.~Bezhanishvili and R.~Jansana}, Priestley style duality for distributive
  meetsemilattices, {\em Studia Logica}\/ {\bf 98} (2011),
  83--123.\showlabel{BezJan11}

\bibitem{BezKupPan12}
{\sc N.~Bezhanishvili, C.~Kupke and P.~Panangaden}, Minimization via Duality,
  in {\em Logic, Language, Information and Computation}, L.~Ong and R.~Queiroz
  (eds.), pp.~191--205, {\em Lecture Notes in Computer Science}\/ vol.~7456,
  Springer Berlin Heidelberg, 2012.\showlabel{BezKupPan12}

\bibitem{B37}
{\sc G.~Birkhoff}, Moore-Smith convergence in general topology, {\em Annals of
  Mathematics}\/ {\bf 38} (1937), 39--56.\showlabel{B37}

\bibitem{Birkhoff79}
{\sc G.~Birkhoff}, {\em Lattice theory}, {\em American Mathematical Society
  Colloquium Publications}\/ vol.~25, American Mathematical Society,
  Providence, R.I., ed.~third, 1979.\showlabel{Birkhoff79}

\bibitem{Boj11}
{\sc M.~Bojanczyk}, Data Monoids, in {\em STACS}, T.~Schwentick and C.~D{\"u}rr
  (eds.), pp.~105--116, {\em LIPIcs}\/ vol.~9, Schloss Dagstuhl -
  Leibniz-Zentrum fuer Informatik, 2011.\showlabel{Boj11}

\bibitem{BBHPRS}
{\sc M.~Bonchi, F.and~Bonsangue, H.~Hansen, P.~Panangaden, J.~Rutten and
  A.~Silva}, Algebra-coalgebra duality in Brzozowski's minimization algorithm,
  {\em {\rm to appear in} ACM Transactions on Computational Logic (TOCL)},
  2013.\showlabel{BBHPRS}

\bibitem{Bourbaki1}
{\sc N.~Bourbaki}, {\em General topology. {C}hapters 1--4}, {\em Elements of
  Mathematics (Berlin)}, Springer-Verlag, Berlin, 1998.\showlabel{Bourbaki1}

\bibitem{BraPin09}
{\sc M.~J. Branco and J.-{\'E}. Pin}, Equations Defining the Polynomial Closure
  of a Lattice of Regular Languages, in {\em Automata, Languages and
  Programming}, S.~Albers, A.~Marchetti-Spaccamela, Y.~Matias, S.~Nikoletseas
  and W.~Thomas (eds.), pp.~115--126, {\em Lecture Notes in Computer Science}\/
  vol.~5556, Springer Berlin Heidelberg, 2009.\showlabel{BraPin09}

\bibitem{Bu62}
{\sc R.~B\"uchi}, On a decision method in restricted second order arithmetic,
  in {\em Proceedings of the International Congress on Logic, Methodology and
  Philosophy of Science}, pp.~1--11, Stanford University Press,
  1962.\showlabel{Bu62}

\bibitem{Col09}
{\sc T.~Colcombet}, The Theory of Stabilisation Monoids and Regular Cost
  Functions, in {\em ICALP (2)}, S.~Albers, A.~Marchetti-Spaccamela, Y.~Matias,
  S.~E. Nikoletseas and W.~Thomas (eds.), pp.~139--150, {\em Lecture Notes in
  Computer Science}\/ vol.~5556, Springer, 2009.\showlabel{Col09}

\bibitem{CrawleyDilworth73}
{\sc P.~Crawley and R.~P. Dilworth}, {\em Algebraic Theory of Lattices},
  Prentice Hall, 1973.\showlabel{CrawleyDilworth73}

\bibitem{DaveyPriestley02}
{\sc B.~A. Davey and H.~A. Priestley}, {\em Introduction to Lattices and Order,
  2nd edition}, Cambridge University Press, 2002.\showlabel{DaveyPriestley02}

\bibitem{Dekkers08}
{\sc M.~Dekkers}, {Stone duality: An application in the theory of formal
  languages}, Master's thesis, Radboud Universiteit Nijmegen, the Netherlands,
  December 2008.\showlabel{Dekkers08}

\bibitem{D70}
{\sc J.~E. Doner}, Tree acceptors and some of their applications, {\em Journal
  of Computer and System Sciences}\/ {\bf 4} (1970), 406--451.\showlabel{D70}

\bibitem{Esa74}
{\sc L.~L. Esakia}, Topological {K}ripke models, {\em Soviet Math. Dokl.}\/
  {\bf 15} (1974), 147--151.\showlabel{Esa74}

\bibitem{Esik02}
{\sc Z.~{\'E}sik}, Extended temporal logic on finite words and wreath products
  of monoids with distinguished generators, in {\em Developments in language
  theory. 6th international conference, DLT 2002, Kyoto, Japan, September
  18-21}, {Ito, Masami et al.} (ed.), Berlin, 2002, pp.~43--58, {\em Lecture
  Notes in Comput. Sci.}\/ n\No 2450, Springer.\showlabel{Esik02}

\bibitem{Gehrke09}
{\sc M.~Gehrke}, Stone Duality and the Recognisable Languages over an Algebra,
  in {\em Algebra and Coalgebra in Computer Science (CALCO 2009)}, K.~et~al.
  (ed.), Berlin, 2009, pp.~236--250, {\em LNCS}\/ vol.~5728,
  Springer.\showlabel{Gehrke09}

\bibitem{Gehrke11}
{\sc M.~Gehrke}, Duality and recognition, in {\em Mathematical Foundations of
  Computer Science (MFCS 2011)}, pp.~3--18, {\em Lecture Notes in Comput.
  Sci.}\/ vol.~6907, Springer, Berlin, 2011.\showlabel{Gehrke11}

\bibitem{Gehrke13}
{\sc M.~Gehrke}, Canonical extensions, Esakia spaces, and universal models,
  {\em Trends in Logic}, 2013, To appear.\showlabel{Gehrke13}

\bibitem{GeGrPi08}
{\sc M.~Gehrke, S.~Grigorieff and J.-{\'E}. Pin}, Duality and Equational Theory
  of Regular Languages, {\em Lect. Notes Comp.Sci. (ICALP)}\/ {\bf 5125}
  (2008), 246--257.\showlabel{GeGrPi08}

\bibitem{GehrkeGrigorieffPin10}
{\sc M.~Gehrke, S.~Grigorieff and J.-{\'E}. Pin}, A Topological Approach to
  Recognition, in {\em ICALP 2010, Part II}, S.~A. et~al. (ed.), Berlin, 2010,
  pp.~151--162, {\em LNCS}\/ vol.~6199,
  Springer.\showlabel{GehrkeGrigorieffPin10}

\bibitem{GehrkeJonsson04}
{\sc M.~Gehrke and B.~J{\'o}nsson}, Bounded distributive lattices expansions,
  {\em Mathematica Scandinavica}\/ {\bf 94},2 (2004),
  13--45.\showlabel{GehrkeJonsson04}

\bibitem{GehrkePriestley07a}
{\sc M.~Gehrke and H.~A. Priestley}, Canonical extensions of double
  quasioperator algebras: An algebraic perspective on duality for certain
  algebras with binary operations, {\em J. Pure Appl. Alg.}\/ {\bf 209} (2007),
  269--290.\showlabel{GehrkePriestley07a}

\bibitem{GehrkePriestleyPP2}
{\sc M.~Gehrke and H.~A. Priestley}, Duality for quasioperator algebras via
  their canonical extensions, {\em Stud. Log.}\/ {\bf 86} (2007),
  31--68.\showlabel{GehrkePriestleyPP2}

\bibitem{Goldblatt89}
{\sc R.~Goldblatt}, Varieties of complex algebras, {\em APAL}\/ {\bf 44}
  (1989), 173--242.\showlabel{Goldblatt89}

\bibitem{H50}
{\sc M.~Hall}, A Topology for free groups and related groups, {\em Annals of
  Mathematics}\/ {\bf 52} (1950), 127--139.\showlabel{H50}

\bibitem{JonssonTarski51}
{\sc B.~J{\'o}nsson and A.~Tarski}, Boolean algebras with operators I, {\em
  Amer. J. Math.}\/ {\bf 73} (1951), 891--939.\showlabel{JonssonTarski51}

\bibitem{JonssonTarski52}
{\sc B.~J{\'o}nsson and A.~Tarski}, Boolean algebras with operators II, {\em
  Amer. J. Math.}\/ {\bf 74} (1952), 127--162.\showlabel{JonssonTarski52}

\bibitem{K56}
{\sc S.~C. Kleene}, Representation of events in nerve nets and finite automata,
  {\em Automata Studies}, 1956, 3--42.\showlabel{K56}

\bibitem{KuLa12}
{\sc M.~Kufleitner and A.~Lauser}, Around dot-Depth One, {\em Int. J. Found.
  Comput. Sci.}\/ {\bf 23},6 (2012), 1323--1340.\showlabel{KuLa12}

\bibitem{Kunc03}
{\sc M.~Kunc}, Equational description of pseudovarieties of homomorphisms, {\em
  Theoretical Informatics and Applications}\/ {\bf 37} (2003),
  243--254.\showlabel{Kunc03}

\bibitem{MW67}
{\sc J.~Mezei and J.~B. Wright}, Algebraic automata and context-free sets, {\em
  Inf. Control}\/ {\bf 11} (1967), 3--29.\showlabel{MW67}

\bibitem{NielsenPlotkinWinskel1981}
{\sc M.~Nielsen, G.~Plotkin and G.~Winskel}, Petri nets, Event structures and
  Domains, part 1, {\em Theoretical Computer Science}\/ {\bf 13} (1981),
  85--108.\showlabel{NielsenPlotkinWinskel1981}

\bibitem{P92}
{\sc J.-{\'E}. Pin}, On reversible automata, in {\em Proceedings of the first
  LATIN conference, Sa\~o-Paulo}, pp.~401--416, {\em Lecture Notes in Computer
  Science}\/ vol.~583, 1992.\showlabel{P92}

\bibitem{Pin95}
{\sc J.-{\'E}. Pin}, A variety theorem without complementation, {\em Russian
  Mathematics (Iz. VUZ)}\/ {\bf 39} (1995), 80--90.\showlabel{Pin95}

\bibitem{pin09}
{\sc J.-{\'E}. Pin}, Profinite Methods in Automata Theory, in {\em STACS 2009},
  Albers and Marion (eds.), pp.~31--50, {\em Leibniz International Proceedings
  in Informatics}\/ vol.~3, Schloss Dagstuhl, 2009.\showlabel{pin09}

\bibitem{Pin2012}
{\sc J.-{\'E}. Pin}, Equational descriptions of languages, {\em Int. J. Found.
  Comput. S.}\/ {\bf 23} (2012), 1227--1240.\showlabel{Pin2012}

\bibitem{PinWeil96b}
{\sc J.-{\'E}. Pin and P.~Weil}, A Reiterman theorem for pseudovarieties of
  finite first-order structures, {\em Algebra Universalis}\/ {\bf 35} (1996),
  577--595.\showlabel{PinWeil96b}

\bibitem{Pippenger97}
{\sc N.~Pippenger}, Regular Languages and Stone Duality, {\em Theory Comput.
  Syst.}\/ {\bf 30},2 (1997), 121--134.\showlabel{Pippenger97}

\bibitem{Polak05}
{\sc L.~Pol{\'a}k}, On varieties, generalized varieties and pseudovarieties of
  homomorphisms, in {\em Contributions to general algebra. 16}, pp.~173--188,
  Heyn, Klagenfurt, 2005.\showlabel{Polak05}

\bibitem{Priestley70a}
{\sc H.~A. Priestley}, Representation of distributive lattices by means of
  ordered stone spaces, {\em Bull. London Math. Soc.}\/ {\bf 2} (1970),
  186--190.\showlabel{Priestley70a}

\bibitem{Rab69}
{\sc M.~O. Rabin}, Decidability of Second-Order Theories and Automata on
  Infinite Trees, {\em Transactions of the American Mathematical Society}\/
  {\bf 141} (1969), 1--35.\showlabel{Rab69}

\bibitem{Raney52}
{\sc G.~N. Raney}, Completely distributive complete lattices, {\em Proc. Amer.
  Math. Soc.}\/ {\bf 3} (1952), 677--680.\showlabel{Raney52}

\bibitem{Reiterman82}
{\sc J.~Reiterman}, The {B}irkhoff theorem for finite algebras, {\em Algebra
  Universalis}\/ {\bf 14},1 (1982), 1--10.\showlabel{Reiterman82}

\bibitem{RhoStei08}
{\sc J.~Rhodes and B.~Steinberg}, {\em The q-theory of Finite Semigroups}, {\em
  Springer Monographs in Mathematics}, Springer, 2008.\showlabel{RhoStei08}

\bibitem{Schmid02}
{\sc J.~Schmid}, Quasiorders and sublattices of distributive lattices, {\em
  Order}\/ {\bf 19} (2002), 11Ð34.\showlabel{Schmid02}

\bibitem{SelKon11}
{\sc V.~Selivanov and A.~Konovalov}, Boolean algebras of regular languages, in
  {\em Proceedings of the 15th international conference on Developments in
  language theory}, Berlin, Heidelberg, 2011, pp.~386--396, {\em DLT'11},
  Springer-Verlag.\showlabel{SelKon11}

\bibitem{Stone36}
{\sc M.~H. Stone}, The theory of representations for Boolean algebras, {\em
  TAMS}\/ {\bf 40} (1936), 37--111.\showlabel{Stone36}

\bibitem{Stone37}
{\sc M.~H. Stone}, Applications of the theory of {B}oolean rings to general
  topology, {\em Trans. Amer. Math. Soc.}\/ {\bf 41},3 (1937),
  375--481.\showlabel{Stone37}

\bibitem{Straubing02}
{\sc H.~Straubing}, On logical descriptions of regular languages, in {\em LATIN
  2002}, Berlin, 2002, pp.~528--538, {\em Lecture Notes in Comput. Sci.}\/ n\No
  2286, Springer.\showlabel{Straubing02}

\bibitem{ThaWri68}
{\sc J.~W. Thatcher and J.~B. Wright}, Generalized Finite Automata Theory with
  an Application to a Decision Problem of Second-Order Logic, {\em Mathematical
  Systems Theory}\/ {\bf 2},1 (1968), 57--81.\showlabel{ThaWri68}

\end{thebibliography}
\bibliographystyle{biblistanglais}
\end{document}